\DeclareMathAlphabet{\mathpzc}{OT1}{pzc}{m}{it}
\newtheorem{theorem}{Theorem}[section]
\newtheorem{lemma}[theorem]{Lemma}
\newtheorem{remark}[theorem]{Remark}
\newtheorem{proposition}[theorem]{Proposition}
\newtheorem{corollary}[theorem]{Corollary}
\newtheorem{definition}[theorem]{Definition}
\newtheorem*{theorem*}{Theorem}
\theoremstyle{plain}
\theoremstyle{plain}
\setlist[itemize]{leftmargin=*}
\def\bpf{\begin{proof}}
	\def\epf{\end{proof}}
\def\be{\begin{equation}}
	\def\ee{\end{equation}}
\def\bea{\begin{eqnarray}}
	\def\eea{\end{eqnarray}}
\def\bt{\begin{theorem}}
	\def\et{\end{theorem}}
\def\bl{\begin{lemma}}
	\def\el{\end{lemma}}
\def\br{\begin{remark}}
	\def\er{\end{remark}}
\def\bc{\begin{corollary}}
	\def\ec{\end{corollary}}
\def\bd{\begin{definition}}
	\def\ed{\end{definition}}
\def\bp{\begin{proposition}}
	\def\ep{\end{proposition}}
\def\h{\mathbb{H}^{n+1}}
\def\p{\partial}
\def\nun{\nu_{n+1}}
\def\zn{z_{n+1}}
\@date \else {\vskip3ex \centering\footnotesize\@date\par\vskip1ex}\fi
\else \@footnotetext{\@setdate}\fi}
\title[Asymptotic Plateau problem ]{Asymptotic Plateau problem via equidistant hyperplanes}
\author{Han Hong}
\address{Yau Mathematics Science Centre, Tsinghua University, 100084, Beijing, China}
\email{hh0927@tsinghua.edu.cn}
\author{Haizhong Li}
\address{Department of Mathematical Sciences, Tsinghua University, 100084, Beijing, China}
\email{lihz@tsinghua.edu.cn}
\author{Meng Zhang}
\address{Department of Mathematical Sciences, Tsinghua University, 100084, Beijing, China}
\email{zhangmen20@mails.tsinghua.edu.cn}
\begin{document}
	\maketitle
	
	\begin{abstract}
		We show the existence of a complete, strictly locally convex hypersurface within $\h$ that adheres to a curvature equation applicable to a broad range of curvature functions. This hypersurface possesses a prescribed asymptotic boundary at infinity and takes the form of a geodesic graph over a smooth bounded domain $\Omega$ at infinity. It is approximated by the shape of geodesic graphs whose boundaries rest upon equidistant hyperplanes. Through this procedure, we establish an alternative method for constructing solutions to the asymptotic Plateau problem. The resulting solutions may differ from the classical ones, particularly in cases where uniqueness cannot be assured.
	\end{abstract}
	
	\section{Introduction}
	In this paper we investigate a prescribing curvature problem with boundary sitting on the equidistant hyperplane in hyperbolic space, and apply this result to obtain 
	the existence of the hypersurface of constant curvature with prescribed asymptotic boundary at infinity.
	
	Let 
	\[\mathbb{H}^{n+1}=\{(z,z_{n+1})\in\mathbb{R}^{n+1}:\ z_{n+1}>0,z\in\mathbb{R}^n\}\]
	with metric
	\[g^{\mathbb{H}^{n+1}}=\frac{1}{z_{n+1}^2}g^{\mathbb{R}^{n+1}}\]
	be the $n+1$ dimensional hyperbolic space with the half space model. Let $\partial_\infty \mathbb{H}^{n+1}$ denote the ideal boundary of $\mathbb{H}^{n+1}$ at infinity, which can be identified with $\mathbb{R}^n\times {0}$ in the half space model. We assume that $\Gamma$ is a compact $(n-1)$-dimensional embedded hypersurface in $\partial_\infty\mathbb{H}^{n+1}$, which can be viewed as the boundary of a smooth domain in $\mathbb{R}^n$. The compactness of $\Gamma$ is considered with respect to the topology in $\mathbb{R}^n$.
	
	The asymptotic Plateau problem aims to find a complete hypersurface $\Sigma$ in $\mathbb{H}^{n+1}$ satisfying the following conditions:
	\begin{equation}\label{equation1introduction}
		f(\kappa[\Sigma])=\sigma
	\end{equation}
	with
	\begin{equation}\label{boundaryconditionintroduction}
		\partial\Sigma=\Gamma
	\end{equation}
	Here, $\kappa[\Sigma]=(\kappa_1,\cdots,\kappa_n)$ denotes the hyperbolic principal curvatures of $\Sigma$, and $f$ is a smooth symmetric function of $n$ variables. The constant $\sigma$ is a given positive value.

	Asymptotic Plateau problem for area miniminzing variety was solved by Anderson \cite{anderson1982,anderson1983} by using  geometric measure theory while the regularity was studied by Hardt-Lin \cite{lin1987}. Later, Tonegawa \cite{tonegawa} used similar ideas and extended their results into CMC case as part of his Phd thesis. Lin \cite{lin1989} started to think of desired (minimal) objects as graphs over mean convex domains at infinity, then proved the existence by applying techniques from PDE. This idea has been applied very well in last few decades to get generalizations of previous results. For special curvature functions such as mean curvature and Gauss curvature,  in a series of works by Nelli-Spruck \cite{nellispruck1996}, Guan-Spruck \cite{guanspruck2000}, Labourie \cite{labourie1991} and Rosenberg-Spruck \cite{rosenberg1994} they have obtained the existence and regularity of the solution while results for general curvature functions defined in the positive once were obtained in \cite{guan2009jga,guansurvey,guanjems,xiaoling2014jdg} by Guan, Spruck, Szapiel and Xiao. See also recent works by Wang \cite{wang2022} and Lu \cite{lu2022} in which they find a solution for some particular curvature functions in a general cone. When the curvature function of the hypersurface is equal to a given non-costant function defined in $\h$, Sui \cite{sui2021} and together with Sun \cite{sui2022} proved the existence under the assumption that the sbsolution exists.
	
	Above mentioned results in previous paragraph are all based on upper half space model and share similar spirit, that is to first seek the existence of compact hypersurfaces of constant curvature with boundary sitting in a horizontal horosphere that has small Euclidean height to the infinity. This requirement is essential in the proof of gradient estiamtes and boundary curvature estimates. We also need this assumption in our context. Furthermore, in the Minkowski model Lopez \cite{lopez1999,lopez2001}
	studied the existence of compact hypersurfaces of constant mean curvature with boundary on three different types of umbillic hypersurfaces. In particular, his results do not require the horosphere or equidistant hyperplane to be close to the infinity in the Euclidean sense. 
	
	It is well-known that there exist four different types of complete umbilical hypersurfaces in hyperbolic space:
	(1) geodesic sphere ($\kappa_i>1$); 
	(2) horosphere ($\kappa_i=1$);
	(3) equidistant hyperplane ($\kappa_i<1$);
	(4) geodesic hyperplane ($\kappa_i=0$).
	We will concentrate on equidistant hyperplanes for seeking the existence of compact hypersurfaces of constant curvature throughout this paper,  for convenience we choose, $0<\epsilon<\pi/2,$
	\begin{equation}\label{equidistant plane}
		P_\epsilon=\{(z_1,\cdots,z_{n+1})\in \mathbb{H}^{n+1}|z_{n}>0,\ \cos{\epsilon}z_{n+1}=\sin{\epsilon}z_n\}.
	\end{equation}
	This is a equidistant hyperplane with constant curvature equal to $\cos\epsilon$ (see Definition \ref{thetahyperplane}).
	
	In this paper, we shall focus on the existence of locally convex hypersurface, that is, the (hyperbolic) principal curvatures are positive everywhere. The class of interest, denoted by $\mathcal{C}_n$, consists of smooth symmetric functions defined on the closure of the positive cone  $K_n^+:=\{\lambda\in\mathbb{R}^{n}:\lambda_i>0, \ \forall i=1,\ldots,n\},$ which satisfy
	\begin{equation}\label{f=0 on the boundary}
		f=0\text{ on }\partial K_n^+,
	\end{equation}
	\begin{align}  \label{fi>0}
		f_i(\lambda)=\frac{\p f}{\p \lambda_i}>0, \ \ \forall 1\le i\le n,
	\end{align}
	\begin{equation}\label{f concave}  
		f\text{ is a concave function },
	\end{equation}
	\begin{equation}\label{f normalized}
		f\ \text{is normalized}: f(1,\cdots,1)=1,
	\end{equation}
	\begin{equation}\label{f degree 1}
		f \ \text{is homogeneous of degree one:} \ f(t\lambda)=tf(\lambda)\text{, }\forall\ t> 0.
	\end{equation}
	Also important is the subclass $\mathcal{S}_n$ consisting of functions in $\mathcal{C}_n$ satisfying a technical assumption:
	\begin{equation}\label{f unn condition}
		\lim_{R\to+\infty}f(\lambda_1,\cdots,\lambda_{n-1},\lambda_n+R)\ge1+\epsilon_0\text{  uniformly in }B_{\delta_0}(\bold{1}),
	\end{equation}
	where $\epsilon_0>0$ and $\delta_0>0$, $B_{\delta_0}(\bold{1})$ is the ball of radius $\delta_0$ centered at $\bold{1}=(1,\cdots,1)\in\mathbb{R}^n$. Note that the functions in $\mathcal{C}_n$ are positive in $K_n^+$ since $f(\lambda)=\sum\lambda_if_i>0$. The assumption (\ref{f unn condition}) is purely for deriving a normal second derivative estimate. This can be removed to find the existence using ideas from Xiao \cite{xiaoling2014jdg} which we will mention before Corollary \ref{using idea from xiao}.
	
	One can check the standard example $f=(\frac{H_n}{H_l})^{\frac{1}{n-l}}$, $0\le l<n$ is in $\mathcal{S}_n$, where $H_l$ is the normalized $l$-th elementary symmetric polynomial, and the proof of monontonicity and concavity can be found in Theorem 15.17 and 15.18 in \cite{lieberman}.
	Combining (\ref{f concave}), (\ref{f normalized}), (\ref{f degree 1}), following two inequalities hold.
	\begin{equation}\label{f upper bounder}
		f(\lambda)\le f(\bold{1})+\sum f_i(\bold{1})(\lambda_i-1)=\sum f_i(\bold{1})\lambda_i=\frac{1}{n}\sum \lambda_i\text{ in }K_n^+
	\end{equation}
	\begin{equation}\label{sum f_i lower bound}
		\sum f_i(\lambda)=f(\lambda)+\sum f_i(\lambda)(1-\lambda_i)\ge f(\bold{1})=1\text{ in }K_n^+.
	\end{equation}
	
	Compared with previous works, we deal with angle functions that naturally appear if we consider geodesic graph (see Definition \ref{cylindericalgraph}) instead of vertical graph. Suppose $\Sigma$ is a connected, orientable compact embedded hypersurface in $\h$.  For any point $q=(z_1,\cdots,z_n,z_{n+1})\in\Sigma$, let $\gamma(q)$ be the complete hyperbolic geodesic (Euclidean half circle) normal to $P_\epsilon$ with the intersection  point $q$. Then $\gamma(q)$ intersects with $\mathbb{R}^n\times\{0\}$ at $(y_1,\cdots,y_n)$ and let $u(y)=\arctan{\frac{z_{n+1}}{z_n}}$ be the Euclidean angle of $p$ and $q$. 
	
	Assume that $\Sigma$ is a geodesic graph over a bounded domain $\Omega\subset\mathbb{R}^n\times\{0\}$, the asymptotic Plateau problem $(\ref{equation1introduction})-(\ref{boundaryconditionintroduction})$ can be written as the following Dirichlet problem for a fully nonlinear second order equation $(u>0)$:
	\begin{align}\label{graph equation}
		\begin{cases}
			G(D^2u,Du,u,y)=\sigma &\text{in} \ \Omega,\\
			u=0 &\text{on}\ \p\Omega.
		\end{cases}
	\end{align}
	As we will see in Section \ref{parametrizationofgraph}, above equation is singular where $u=0$. We then approximate the boundary condition by $u=\epsilon$ on the boundary, and thus concentrate on the following new equation
	\begin{align}\label{newboundarycondition}
		\begin{cases}
			G(D^2u,Du,u,y)=\sigma &\text{in} \ \Omega,\\
			u=\epsilon &\text{on}\ \p\Omega.
		\end{cases}
	\end{align}
	
	We call solution to $(\ref{graph equation})$ or $(\ref{newboundarycondition})$ admissible if $graph(u)$ is strictly convex, i.e., hyperbolic principal curvatures are strictly positive. Our theorem can be described as 
	\begin{theorem}\label{thm with bdy}
		Let $\Omega$ be a bounded smooth domain in $\mathbb{R}^n$ and $f\in \mathcal{S}_n$. Then for any $\epsilon>0$ sufficiently small, there exists an admissible solution $u^\epsilon\in C^{\infty}(\bar{\Omega})$ of (\ref{newboundarycondition}) for any $\sigma\in(0,\cos\epsilon)$. Moreover, $u^\epsilon$ satisfies  the a priori estimates
		\begin{equation}
			|Du^{\epsilon}|\le C\text{ in }\Omega
		\end{equation}
		and 
		\begin{equation}
			\sin{u^\epsilon}\cdot|D^2u^\epsilon|\le C\text{ in }\Omega
		\end{equation}
		where $C$ is independent of $\epsilon$.
	\end{theorem}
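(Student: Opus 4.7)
The plan is to apply the continuity method to (\ref{newboundarycondition}) in the space of admissible $C^{2,\alpha}(\bar\Omega)$ functions, along the path $\sigma_t = (1-t)\cos\epsilon + t\sigma$ for $t\in[0,1]$. At $t=0$ the constant function $u\equiv \epsilon$ is a solution because its geodesic graph is precisely the equidistant hyperplane $P_\epsilon$, whose principal curvatures all equal $\cos\epsilon$, so $f(\kappa)=\cos\epsilon$ by the normalization (\ref{f normalized}) and homogeneity (\ref{f degree 1}). Openness at an admissible solution follows from the implicit function theorem together with the strict monotonicity (\ref{fi>0}), which makes the linearized operator uniformly elliptic and hence invertible under Dirichlet boundary data. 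Closedness reduces to uniform a priori $C^{2,\alpha}$ estimates for admissible solutions of $G=\sigma_t$, which is the core of the argument.

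For the estimates I would climb the usual ladder. The $C^0$ bound $\epsilon\le u\le \pi/2-\delta$ follows from comparison with $P_\epsilon$ from below and with a tilted or translated equidistant hyperplane (or large geodesic sphere) of $f$-curvature slightly less than $\sigma$ from above; the upper bound (\ref{f upper bounder}) is what guarantees such an upper barrier exists for every $\sigma\in(0,\cos\epsilon)$. The gradient bound is first obtained on $\partial\Omega$ by sandwiching the graph between rotates and translates of $P_\epsilon$ whose $f$-curvatures straddle $\sigma$; it is then globalized by applying the maximum principle to $|Du|^2$ plus an auxiliary function of $u$. For the second derivatives, the tangential and mixed tangential-normal components on $\partial\Omega$ are handled by Guan--Spruck-style barriers adapted to the equidistant geometry, while the pure double-normal second derivative on $\partial\Omega$ is where assumption (\ref{f unn condition}) enters essentially: after diagonalization of the boundary second fundamental form, the tangential control reduces the question to a lower bound for $f(\lambda_1,\ldots,\lambda_{n-1},\lambda_n+R)$ as $R\to\infty$ near $\mathbf{1}$, which is exactly (\ref{f unn condition}).

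The main obstacle, and the reason for the weighted form $\sin(u^\epsilon)\cdot|D^2 u^\epsilon|\le C$, is that (\ref{newboundarycondition}) is genuinely singular at $u=0$: the principal curvatures of the geodesic graph scale like $\sin u\cdot D^2u$ up to lower order terms in $Du$ and $u$, so the stated bound is really a uniform curvature bound in disguise. To obtain the interior $C^2$ estimate I would apply the maximum principle to an auxiliary function of the form
\[ W=\log\!\bigl(\sin u\cdot \kappa_{\max}\bigr)+\phi(|Du|^2)+Nu, \]
with $\phi$ convex and $N$ large. Differentiating the equation twice and using the concavity (\ref{f concave}) together with the lower bound (\ref{sum f_i lower bound}) on $\sum f_i$, the bad terms coming from the $u$- and $y$-dependence of $G$ and from the degeneracy at $u=0$ are absorbed by good terms of the form $\sum f_i(\kappa_i^2+\kappa_{\max}^2)$ generated by $\phi$; eigenvalue multiplicities at $\kappa_{\max}$ are handled by the standard perturbation trick. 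Once the weighted $C^2$ bound is in place, on any subdomain where $u$ stays bounded away from $0$ the equation is uniformly elliptic with concave structure, so Evans--Krylov and Schauder theory upgrade the solution to $C^\infty(\bar\Omega)$ and close the continuity method. Uniformity in $\epsilon$ is a byproduct: every barrier and every choice in $W$ depends only on quantities intrinsic to $P_\epsilon$ and not on the specific boundary datum.
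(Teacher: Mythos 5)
Your plan of climbing the estimate ladder ($C^0$, gradient, boundary second derivatives via (\ref{f unn condition}), weighted interior curvature estimate) matches the structure of Sections 3--5 of the paper in spirit, though the paper's interior test quantity is $\displaystyle (\tan u)^b\,\kappa_{\max}/(-\nu\cdot W\sec u - a)$ rather than a sum of $\log(\sin u\cdot\kappa_{\max})$, a convex function of $|Du|^2$, and $Nu$; both are workable shapes of the same idea. The real problem is the existence mechanism.

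You assert that openness along the path $\sigma_t=(1-t)\cos\epsilon+t\sigma$ ``follows from the implicit function theorem together with the strict monotonicity (\ref{fi>0}), which makes the linearized operator uniformly elliptic and hence invertible under Dirichlet boundary data.'' That step is not correct. Uniform ellipticity of $\mathcal{L}=G^{st}\partial_{st}+G^s\partial_s+G_u$ does \emph{not} give invertibility of its Dirichlet problem: one also needs a sign condition on the zeroth-order coefficient. Here $G_u=(\nu_n-\nu_{n+1}\cos u)\sum F^{ii}+\sigma\cot u$ (see (\ref{calculationofGu})), and $\sigma\cot u$ is large and positive for small $u$, so $G_u$ need not be $\le 0$ along the path; the linearized Dirichlet problem may have nontrivial kernel. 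At $t=0$ one does have $G_u|_{u\equiv\epsilon}=-\sin\epsilon<0$, which is why the implicit function theorem starts the path, but there is no reason the sign persists. This is precisely the obstruction the paper flags: it abandons the continuity method and instead invokes Y.~Y.~Li's degree theory for fully nonlinear elliptic operators, using the a priori estimates to define a bounded open set $O$ in $C_0^{4,\alpha}(\bar\Omega)$ on which the degree is homotopy-invariant, and computing $\operatorname{degree}(E(\cdot,1),O,0)=\operatorname{degree}(E(\cdot,0),O,0)\neq 0$ at $t=0$ where the linearization is invertible and the solution is unique. To repair your argument you would need either to verify that $G_u\le 0$ for all admissible solutions along the path (not clear, and not claimed in the paper) or to replace the openness/closedness continuity argument by a degree-theoretic one as the paper does.
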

	
	Based on this theorem and the standard procedures, we are able to give a solution to the asymptotic Plateau problem.
	
	\begin{theorem}\label{maintheorem2}
		Let $\Omega$ be a bounded smooth domain in $\mathbb{R}^n$ with boundary $\Gamma$ and $\sigma\in(0,1)$. Suppose that $f\in \mathcal{S}_n$. Then there exists a complete locally strictly convex hypersurface $\Sigma\subset \h$ satisfying (\ref{equation1introduction})-(\ref{boundaryconditionintroduction}) with uniformly bounded principal curvatures
		\[|\kappa[\Sigma]|\leq C \ \ \text{on}\ \ \Sigma.\]
		Moreover, $\Sigma$ is the graph of an admissible solution $u\in C^\infty(u)\cap C^1(\bar{\Omega})$ of the Dirichlet problem (\ref{graph equation}). Furthermore, $u^2\in C^\infty(\Omega)\cap C^{1,1}(\bar{\Omega})$ and
		\[\sqrt{1+z_n^2|Du|^2}=\frac{1}{\sigma} \ \text{on}\ \p\Omega,\ \ \ \ \ \ \ \sqrt{1+z_n^2|Du|^2}\leq \frac{1}{\sigma} \ \text{in}\ \Omega,\]
		\[\sin u\cdot |D^2u|\leq C\ \ \ \text{in}\ \ \Omega.\]
	\end{theorem}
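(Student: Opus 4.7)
The plan is to obtain the solution $u$ as a locally uniform limit of the family $\{u^\epsilon\}$ of admissible solutions produced by Theorem \ref{thm with bdy}, and then upgrade regularity and verify the boundary identity via barriers constructed from equidistant hyperplanes. Throughout, I would fix a sequence $\epsilon_k \searrow 0$ and set $u_k := u^{\epsilon_k}$.

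\emph{Interior convergence.} The uniform bound $|Du_k|\le C$ combined with $0<u_k<\pi/2$ makes $\{u_k\}$ equicontinuous and uniformly bounded on $\bar\Omega$, so Arzel\`a--Ascoli extracts a subsequence converging in $C^{0,\alpha}(\bar\Omega)$ to some $u\in C^{0,1}(\bar\Omega)$ with $u=0$ on $\partial\Omega$. To promote this to $C^\infty_{\mathrm{loc}}(\Omega)$ convergence, on any $K\Subset\Omega$ I would produce a uniform lower bound $u_k\ge c_K>0$ by comparing from below with a small geodesic sphere in $\h$ of constant curvature at least $\sigma$ sitting over $K$; this is admissible by the structure of $\mathcal{C}_n$ and the choice $\sigma<1$. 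Given $u_k\ge c_K$, the estimate $\sin u_k\cdot |D^2u_k|\le C$ collapses to a bound $|D^2 u_k|\le C(\sin c_K)^{-1}$. Since $G$ is concave in $D^2 u$ and uniformly elliptic on the admissible cone where $u_k\in[c_K,\pi/2)$ and $|Du_k|\le C$, Evans--Krylov supplies $C^{2,\alpha}$ estimates on $K$ and Schauder bootstrapping then gives $C^{k,\alpha}$ estimates to all orders. A diagonal subsequence converges in $C^\infty_{\mathrm{loc}}(\Omega)$ to $u$, which solves (\ref{graph equation}) classically.

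\emph{Boundary regularity and the identity $\sqrt{1+z_n^2|Du|^2}=1/\sigma$ on $\partial\Omega$.} This is where the real work lies. The inequality $\sqrt{1+z_n^2|Du|^2}\le 1/\sigma$ in $\Omega$ should follow from a maximum principle applied to the quantity measuring the Euclidean angle the geodesic graph makes with the vertical, exploiting that the umbilical equidistant hyperplanes (\ref{equidistant plane}) have curvature $\cos\epsilon$, so that the constant competitor $\sqrt{1+z_n^2|Du|^2}\equiv 1/\sigma$ is precisely attained on slices of the right curvature. For the boundary behaviour, I would build two families of barriers tangent to $\Gamma$ from the inside and from the outside: each barrier is a piece of an equidistant hyperplane in $\h$ of hyperbolic curvature exactly $\sigma$, expressed as a geodesic graph. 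Comparing $u_k$ against these barriers (after adjusting for the approximate boundary value $\epsilon_k$) first forces $u_k\to 0$ with the correct asymptotic slope at $\partial\Omega$, and in the limit yields the equality $\sqrt{1+z_n^2|Du|^2}=1/\sigma$ on $\partial\Omega$. Since those model barriers have $v^2\in C^{1,1}$ in normal coordinates across $\partial\Omega$ (the angle variable $u$ behaves like $\sqrt{\mathrm{dist}(\cdot,\partial\Omega)}$ times a smooth nonvanishing function), sandwiching $u$ between them produces $u^2\in C^{1,1}(\bar\Omega)$ and hence $u\in C^1(\bar\Omega)$.

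\emph{Principal curvature bound and completeness.} The bound $|\kappa[\Sigma]|\le C$ on $\Sigma$ follows from plugging the interior estimates $|Du|\le C$ and $\sin u\cdot |D^2 u|\le C$ into the explicit formula for the second fundamental form of a geodesic graph derived in Section \ref{parametrizationofgraph}: the weighting by $\sin u$ exactly cancels the blow-up of the Euclidean second derivatives as one approaches $\partial\Omega$. Completeness of $\Sigma$ is automatic because $u\to 0$ at $\partial\Omega$, which corresponds to $z_{n+1}\to 0$, and the hyperbolic metric blows up at $\partial_\infty\h$; and $\partial_\infty\Sigma=\Gamma$ follows from $u=0$ on $\partial\Omega$ by the very definition of the geodesic graph parametrization. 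The main obstacle will be the second step: constructing sharp two-sided barriers of curvature $\sigma$ that are tangent to an arbitrary smooth $\Gamma$ and simultaneously encode both the exact angle condition and the $C^{1,1}$ regularity of $u^2$.
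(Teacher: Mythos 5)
Your high-level plan—extract a subsequential limit of $u^{\epsilon_k}$, establish interior $C^\infty$ convergence via a positive lower bound for $u_k$ on compact subsets together with Evans--Krylov, then treat the boundary by barriers—does match the paper's sketch in the introduction. However, two concrete errors appear in your boundary step, and there is a smaller ellipticity gap in the interior step.

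You assert that the angle function behaves like $\sqrt{\mathrm{dist}(\cdot,\partial\Omega)}$ near $\partial\Omega$; it does not. The boundary identity you are trying to prove, $\sqrt{1+z_n^2|Du|^2}=1/\sigma$ on $\partial\Omega$, together with the fact that $z_n$ is bounded below on $\partial\Omega$, shows that $|Du|$ is \emph{bounded} up to $\partial\Omega$, hence $u\sim\mathrm{dist}(\cdot,\partial\Omega)$. (The $\sqrt{\mathrm{dist}}$ scaling belongs to the Euclidean \emph{height} of a hypersurface meeting $\partial_\infty\h$, not to the angle used in the geodesic-graph parametrization.) Moreover, trapping $u$ between two $C^1$-tangent barriers with $C^{1,1}$ squares gives $u\in C^1(\bar\Omega)$ with the prescribed gradient on $\partial\Omega$, but it does \emph{not} yield $u^2\in C^{1,1}(\bar\Omega)$: a function sandwiched between two $C^{1,1}$ functions can still have unbounded second derivatives in between. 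The correct route is the Hessian estimate $\sin u\cdot|D^2u|\le C$ already supplied by Theorem \ref{thm with bdy}: since $\sin u\sim u\sim\mathrm{dist}$ near $\partial\Omega$ this gives $u|D^2u|\le C$, and then $D^2(u^2)=2\,Du\otimes Du+2u\,D^2u$ is bounded, which is precisely $u^2\in C^{1,1}$. You have this estimate available but never invoke it, so your $C^{1,1}$ conclusion is unjustified as written. Finally, to run Evans--Krylov on a compact $K\Subset\Omega$ you need uniform ellipticity of the linearized operator, which requires the principal curvatures $\kappa[\Sigma_k]$ bounded away from $\partial K_n^+$, not merely bounded above; the positive lower bound follows from condition \eqref{f=0 on the boundary} together with $f(\kappa)=\sigma>0$ and the upper curvature bound, a point the paper records explicitly but which your argument—which records only the bound $|D^2u_k|\le C(\sin c_K)^{-1}$—does not address.
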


	The curvature estimate $(1)$ in Theorem \ref{interior C2 esitimates thm} and the boundary curvature estimate in Theorem \ref{boundaryestiamtesofsecondderivatives} demonstrate that the hyperbolic principal curvatures of the admissible solutions of equation (\ref{newboundarycondition}) are uniformly bounded from above, regardless of $\epsilon$. Additionally, due to condition (\ref{f=0 on the boundary}) of the class $\mathcal{C}_n$, the hyperbolic principal curvatures also have a uniform positive lower bound, independent of $\epsilon$. Lemma \ref{c0estimate} provides a lower bound for admissible solutions on compact subsets of $\Omega$, ensuring that equation (\ref{graph equation}) is uniformly elliptic on such subsets. By applying the classical interior estimates of Evans and Krylov, we conclude that the admissible solutions possess uniform $C^{2,\alpha}$ estimates for any compact subdomain of $\Omega$. Ultimately, employing a diagonal sequence and standard arguments, we can establish the existence of a solution in Theorem \ref{maintheorem2} for any $0<\sigma<1$. It is important to note that the diagonal process also needs to be performed for $\sigma$, as we only have existence for $0<\sigma<\cos\epsilon$ in the compact case.
	
	The technical assumption (\ref{f unn condition}) is added for proving the boundary estimates for the second derivatives. We can borrow idea from  Xiao\cite{xiaoling2014jdg} to abandon this assumption. In fact, the global interior estimates also hold for hypersurfaces with asymptotic boundary as shown in part (2) of Theorem \ref{interior C2 esitimates thm}. For general curvature function $f\in \mathcal{C}_n$,  let $f^\theta:=\theta g+(1-\theta)f$ where $g\in\mathcal{S}_n$, then $f^\theta\in \mathcal{S}_n$. Thus we can use Theorem \ref{maintheorem2} to get a complete locally strictly convex hypersurface $\Sigma^\theta$ as a graph of $u^\theta$. Moreover, derivatives of $u^\theta$ have bounds independent of $\theta$. Thus letting $\theta$ tend to zero proves Theorem \ref{maintheorem2} without assumption (\ref{f unn condition}). Moreover, the domain $\Omega$ does not need to smooth by a standard approximation argument. 
	
	\begin{corollary}\label{using idea from xiao}
		Let $\Omega$ be a bounded $C^2$ domain in $\mathbb{R}^n$ with boundary $\Gamma$ and $\sigma\in(0,1)$. Suppose that $f\in \mathcal{C}_n$. Then there exists a complete locally strictly convex hypersurface $\Sigma\subset \h$ satisfying (\ref{equation1introduction})-(\ref{boundaryconditionintroduction}) with uniformly bounded principal curvatures
		\[|\kappa|\leq C \ \ \text{on}\ \ \Sigma.\]
		Moreover, $\Sigma$ is the graph of an admissible solution $u\in C^\infty(u)\cap C^1(\bar{\Omega})$ of the Dirichlet problem (\ref{graph equation}). Furthermore, $u^2\in C^\infty(\Omega)\cap C^{1,1}(\bar{\Omega})$ and
		\[\sqrt{1+z_n^2|Du|^2}=\frac{1}{\sigma} \ \text{on}\ \p\Omega,\ \ \ \ \ \ \ \sqrt{1+z_n^2|Du|^2}\leq \frac{1}{\sigma} \ \text{in}\ \Omega,\]
		\[\sin u\cdot |D^2u|\leq C\ \ \ \text{in}\ \ \Omega.\]
	\end{corollary}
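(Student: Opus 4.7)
The plan is a double approximation: first regularize the curvature function by a convex combination with an element of $\mathcal{S}_n$ so that Theorem \ref{maintheorem2} applies, and second approximate the $C^2$ domain by smooth ones. The overall argument is the one sketched in the paragraph preceding the corollary, and it relies decisively on the fact that the key a priori estimates for $f\in\mathcal{C}_n$ can be made uniform in the approximating parameter.

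First, fix any $g\in\mathcal{S}_n$ (for instance $g(\lambda)=\frac{1}{n}\sum\lambda_i$) and set $f^\theta:=\theta g+(1-\theta)f$ for $\theta\in(0,1]$. I would begin by checking that $f^\theta\in\mathcal{S}_n$: properties \eqref{f=0 on the boundary}--\eqref{f degree 1} are preserved by convex combinations of functions sharing them, the normalization $f^\theta(\mathbf{1})=\theta+(1-\theta)=1$ is immediate, and the limiting condition \eqref{f unn condition} follows from $f^\theta\ge\theta g$ together with the corresponding bound for $g$.

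Next, assume for the moment that $\Omega$ is smooth. Theorem \ref{maintheorem2} applied to $f^\theta$ yields, for each $\theta\in(0,1]$, a complete locally strictly convex hypersurface $\Sigma^\theta=\mathrm{graph}(u^\theta)$ satisfying $f^\theta(\kappa[\Sigma^\theta])=\sigma$ with
\begin{equation*}
\sqrt{1+z_n^2|Du^\theta|^2}\le\frac{1}{\sigma},\qquad \sin u^\theta\cdot|D^2u^\theta|\le C,
\end{equation*}
where $C$ must be verified to be \emph{independent of} $\theta$. The gradient bound is structural and therefore $\theta$-uniform. The Hessian bound comes from the global interior $C^2$ estimate of Theorem \ref{interior C2 esitimates thm}(2), whose proof uses only \eqref{f=0 on the boundary}--\eqref{f degree 1} and does not invoke \eqref{f unn condition}; hence it is also uniform in $\theta$. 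Combined with Lemma \ref{c0estimate}, which provides a positive lower bound for $u^\theta$ on compact subsets depending only on $\Omega$ and $\sigma$, the equation becomes uniformly elliptic on every $K\Subset\Omega$ with constants independent of $\theta$. Evans--Krylov and Schauder theory then deliver $C^{k,\alpha}_{\mathrm{loc}}(\Omega)$ bounds uniform in $\theta$. A standard diagonal subsequence as $\theta_j\to0^+$ extracts $u^{\theta_j}\to u$ locally smoothly, with $u\in C^\infty(\Omega)\cap C^1(\bar\Omega)$ solving $f(\kappa[\Sigma])=\sigma$, and the displayed estimates pass to the limit. The boundary identity $\sqrt{1+z_n^2|Du|^2}=1/\sigma$ on $\Gamma$ together with $u^2\in C^{1,1}(\bar\Omega)$ is inherited from the $\theta$-uniform barrier constructions used in Theorem \ref{maintheorem2}.

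Finally, for a merely $C^2$ domain $\Omega$, I would exhaust it by an increasing sequence of smooth domains $\Omega_k\Subset\Omega_{k+1}\Subset\Omega$ with $\Omega_k\to\Omega$ in $C^2$. Applying the previous paragraph to each $\Omega_k$ gives a solution with estimates depending on $\Omega_k$ only through $C^2$ data, hence bounded in $k$; a further diagonal extraction produces the desired $u$ on $\Omega$. The main obstacle is the $\theta$-uniformity in the penultimate step: since the boundary second derivative estimate that normally requires \eqref{f unn condition} is not available for $f$ itself, the entire argument hinges on the fact that the global \emph{interior} Hessian bound in Theorem \ref{interior C2 esitimates thm}(2), which applies to hypersurfaces with asymptotic boundary, is stable under $\theta\to0^+$. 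Once this uniformity is verified, the two limits and the domain approximation are routine.
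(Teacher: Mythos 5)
Your proposal follows exactly the sketch the paper gives before the corollary: regularize $f$ by $f^\theta=\theta g+(1-\theta)f$ with $g\in\mathcal{S}_n$, apply Theorem \ref{maintheorem2} for each $\theta$, observe that the $\theta$-uniformity of the a priori bounds comes from the global interior estimate of Theorem \ref{interior C2 esitimates thm}(2) rather than the boundary Hessian estimate (which is what uses \eqref{f unn condition}), pass to the limit $\theta\to 0^+$, and then handle a $C^2$ domain by a further approximation. This matches the paper's argument.

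One step is argued incorrectly, though the conclusion is right. You claim that the limiting condition \eqref{f unn condition} for $f^\theta$ ``follows from $f^\theta\ge\theta g$ together with the corresponding bound for $g$.'' That only gives $\lim_{R\to\infty}f^\theta\ge\theta(1+\epsilon_0^g)$, which is strictly less than $1$ once $\theta<1/(1+\epsilon_0^g)$, so it does not produce a number exceeding $1+\epsilon_0$ as \eqref{f unn condition} requires. The inequality $f^\theta\ge\theta g$ is the wrong thing to discard the $(1-\theta)f$ contribution with. The correct argument is to keep both terms: by monotonicity \eqref{fi>0} and continuity near $\mathbf 1$ one has $\lim_{R\to\infty}f(\lambda_1,\ldots,\lambda_n+R)\ge f(\lambda)\ge 1-C\delta_0$ for $\lambda\in B_{\delta_0}(\mathbf 1)$, whence
\begin{equation*}
\lim_{R\to\infty}f^\theta(\lambda_1,\ldots,\lambda_n+R)\;\ge\;\theta(1+\epsilon_0^g)+(1-\theta)(1-C\delta_0)\;=\;1+\theta\epsilon_0^g-(1-\theta)C\delta_0,
\end{equation*}
which exceeds $1$ once $\delta_0$ is taken small depending on $\theta$. (Your specific choice $g=H_1$ does make your shortcut work, because $\lim_R H_1=+\infty$, but as written the claim is for general $g\in\mathcal{S}_n$ and is false there.) Note this analysis also makes explicit that the constants $\epsilon_0^{f^\theta},\delta_0^{f^\theta}$ degenerate as $\theta\to 0$, which is exactly why the boundary Hessian estimate is not $\theta$-uniform and why the interior estimate must carry the day — a point you otherwise identify correctly.
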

	
	Uniqueness results were obtained in \cite{guansurvey,xiaoling2014jdg} if
	one of followings holds: 
	\begin{itemize}
		\item either $\Gamma$ is $C^{2,\alpha}$ and (Euclidean) mean convex, 
		\item or $\Gamma$ is $C^2$ and star-shaped about the origin, 
		\item or $\sum f_i>\sum \kappa_i^2 f_i $ in $K_n^+\cap \{0<f<1\}$ is satisfied.
	\end{itemize} Thus under these assumptions, our solutions in Theorem \ref{maintheorem2} and Corollary \ref{using idea from xiao} are exactly same as the ones obtained in \cite{guansurvey,xiaoling2014jdg}. In other cases, we possibly construct different solutions to asymptotic Plateau problem. It is an interesting question to determine whether our procedure provides new solutions in general cases.
	
	\vspace{0.5cm}
	
	The organization of the paper is as follows. In Section \ref{parametrizationofgraph}, we calculate geometric quantities for a geodesic graph and derive certain identities that will be used later. Section \ref{gradientestimatessection} is dedicated to establishing basic identities on a hypersurface in $\mathbb{H}^{n+1}$, which will serve as the foundation for deriving gradient estimates. Additionally, we provide a proof of $C^0$ estimates for the geodesic graph with a boundary located on equidistant hyperplanes. To prove the boundary estimates for the second derivatives, we employ a maximum principle for the partial linearized operator around boundary points. This analysis is presented in Section \ref{section4 boundary estimates}. In Section \ref{section5}, we focus on proving two estimates for the maximum hyperbolic principal curvature. One estimate pertains to compact hypersurfaces, while the other relates to global interior estimates for complete noncompact hypersurfaces. We demonstrate that their proofs can be unified through Theorem \ref{interior C2 esitimates thm}. Lastly, in the final section, we utilize the degree theory developed by Y.Y. Li to obtain a solution for the approximation equation. We note that the linearized operator may not always be invertible over the time interval $[0,1]$, rendering the method of continuity insufficient for the proof.
	
	\subsection{Acknowledgements}
	The first author would like to thank all the participants in the random hyperbolic surface seminar for discussion about the Anderson's paper.  The first author is supported by Shuimu Tsinghua Scholar Program, China Postdoctoral Science Foundation No.2021TQ0186 and Postdoctoral Exchange Fellowship Program No.YJ20210267. The second and third authors are partially supoorted by NSFC No. 11831005.
	
	\vspace{1cm}
	\section{Preliminaries}\label{parametrizationofgraph}

	Consider the space 
	$\mathbb{R}_+^{n+1}=\{(z_1,\cdots,z_n,z_{n+1}): z_{n+1}>0\}$ with the hyperbolic metric 
	\[g=\frac{dz_1^2+\cdots+dz_{n+1}^2}{z_{n+1}^2}.\]
	
	Suppose $\Sigma$ can be locally represented as the geodesic graph of a smooth function $u>0$ in a domain $\Omega\subset \mathbb{R}^{n}_+$ where $ \mathbb{R}^{n}_+=\{y=(y_1,\cdots,y_n): y_n>0\}.$ That is 
	\[\Sigma=\{(y_1,\cdots,y_{n-1},y_n\cos u,y_n\sin u)\in \mathbb{R}_+^{n+1}:\ y\in \Omega\}.\]
	As mentioned in the introduction,  $u$ is the angle function that represents the angle between the point on the graph and the domain. Denote the canonical basis of $\mathbb{R}^{n+1}$ by $E_1,\cdots,E_{n+1}.$
	
	In this scenario, the map $F(y_1,\cdots, y_n)=(y_1,\cdots,y_{n-1},y_n\cos u,y_n\sin u)$ sends $\p_{y_i}$ and $\partial_{y_n}$ to 
	\[F_i=E_i-(y_nu_i\sin u ) E_n+(y_nu_i\cos u )E_{n+1}, \ \ \ i=1,\ldots, n-1.\]
	and
	\[F_n=(\cos u-y_nu_n\sin u)E_n+(\sin u+y_nu_n\cos u)E_{n+1}.\]
	Thus the (Euclidean) unit normal vector field to $\Sigma$ is 
	\[\nu=\frac{(-y_nu_1,\cdots,-y_{n}u_{n-1},-\sin u-y_nu_n\cos u,\cos u-y_nu_n\sin u)}{\omega}\]
	where 
	\[\omega=\sqrt{1+y_n^2|\nabla u|^2}.\]
	
	The induced Euclidean metric and second fundamental form of $\Sigma$ are given by
	\[g^E_{ij}=\delta_{ij}+y^2_nu_iu_j\]
	and
	\[h_{ij}^E=\frac{\delta_{nj}u_i+\delta_{ni}u_j+y_n^2u_iu_ju_n+y_nu_{ij}}
	{\omega}.\]
	
	According to \cite{caffarelli1986}, the Euclidean principal curvature are the eigenvalues of the symmetric matrix $A^E[u]=[a_{ij}^E]$. Here
	\[a^E_{ij}=\gamma^{ik}h_{ij}^{E}\gamma^{lj}\]
	where $\gamma^{ij}$ is the inverse matrix of $\gamma_{ij}$ which is the square root of the Riemmanian metric $g^E_{ij}$, i.e., $\gamma_{ij}\gamma_{jk}=g^E_{ik}$. Then it is not hard to see that
	\[\gamma_{ij}=\delta_{ij}+\frac{y_n^2u_iu_j}{1+\omega}\]
	and
	\[\gamma^{ij}=\delta_{ij}-\frac{z_n^2u_iu_i}{\omega(1+\omega)}.\]
	
	From the relation between hyperbolic and Euclidean principal curvature (see equation (\ref{relationbetweenprincipalcurvatures})), we get that the hyperbolic principal curvatures of the hypersurface $\Sigma$ are the eigenvalues of the matrix $A[u]=[a_{ij}]$. Here,
	\[a_{ij}=\frac{y_nz_{n+1}}{\omega}\gamma^{ik}u_{kl}\gamma^{\ell j}+\frac{y^2_nz_{n+1}}{\omega^3}u_iu_ju_n+\frac{z_{n+1}}{\omega^2}(\gamma^{in}u_j+\gamma^{jn}u_i)+\nu_{n+1}\delta_{ij}.\]
	In the calculation we have used that $\gamma^{ij}u_j=\frac{u_i}{\omega}$. Note that $\nu_{n+1}=\frac{\cos u-y_nu_n\sin u}{\omega}$ and
	$z_{n+1}=y_n\sin u.$
	
	We define a vector field in $\h$
	\begin{equation}\label{wdefinition}W((z_1,\cdots,z_n,z_{n+1}))=(0,\cdots,0,\sin \theta,-\cos \theta),\end{equation}
	where $\cot\theta=z_n/z_{n+1}$. This vector is perpendicular to the equidistant hyperplane that passes through the origin and forms the angle $ \theta$ with the infinity (see definition below). For the convenience of the discussion, we define
	
	\begin{definition}\label{thetahyperplane}
		Throughout this paper, a (equidistant) hyperplane in $\mathbb{R}^{n+1}_+$ is called $\theta$-hyperplane, denoted by $P_\theta$,
		if it passes through the origin and forms an angle $\theta$ with $\mathbb{R}^{n}_+$.
	\end{definition}
	
	It is known that $\theta$-hyperplane is umbillic and has constant principal curvature $\cos\theta$ in $\h$.
	
	\begin{definition}\label{cylindericalgraph}
		We say that $\Sigma$ is a geodesic graph over a domain $\Omega\subset \mathbb{R}^n_+$ if 
		\[\nu\cdot W<0, \ \ \forall \ x\in\Omega.\]
	\end{definition}
	
	\vskip.3cm
	
	Let $\mathcal{M}$ be the vector space of $n \times n$ symmetric matrices and
	$$
	\mathcal{M}_K=\{A \in \mathcal{S}: \lambda(A) \in K\},
	$$
	where $\lambda(A)=\left(\lambda_1, \ldots, \lambda_n\right)$ denotes the eigenvalues of $A$. Define a function $F$ by
	$$
	F(A)=f(\lambda(A)), \quad A \in \mathcal{M}_K .
	$$
	Throughout the paper we denote
	$$
	F^{i j}(A)=\frac{\partial F}{\partial a_{i j}}(A), \quad F^{i j, k l}(A)=\frac{\partial^2 F}{\partial a_{i j} \partial a_{k l}}(A) .
	$$
	The matrix $\left\{F^{i j}(A)\right\}$ is symmetric and its eigenvalues are $f_1, \ldots, f_n$, and therefore it is positive definite for $A \in \mathcal{M}_K$ if $f$ satisfies $(\ref{fi>0})$, while $(\ref{f concave})$ implies that $F$ is concave for $A \in \mathcal{S}_K$, that is
	$$
	F^{i j, k l}(A) \xi_{i j} \xi_{k l} \leq 0, \quad \forall\left\{\xi_{i j}\right\} \in \mathcal{M}, A \in \mathcal{M}_K .
	$$
	We have that
	$$
	\begin{aligned}
		F^{i j}(A) a_{i j} & =\sum f_i(\lambda(A)) \lambda_i, \\
		F^{i j}(A) a_{i k} a_{j k} & =\sum f_i(\lambda(A)) \lambda_i^2 .
	\end{aligned}
	$$

	Now, recall the function $G$ in $(\ref{graph equation})$ is determined by
	\begin{equation*}\label{definitionofG}
		G(D^2u,Du,u,y)=F(A[u]),
	\end{equation*}
	where $A[u]=[a_{ij}]$. We calculate that
	\[G^{st}:=\frac{\p G}{\p u_{st}}=\frac{y_nz_{n+1}}{\omega}F^{ij}\gamma^{is}\gamma^{tj}\]
	and
	\[G^{st}u_{st}=F^{ij}a_{ij}-\nu_{n+1}\sum F^{ii}-\frac{y^2_nz_{n+1}}{\omega^3}F^{ij}u_iu_j-\frac{z_{n+1}}{\omega^2}F^{ij}(\gamma^{in}u_j+\gamma^{jn}u_i).\]
	Moreover,
	\begin{align*}
		G_u:=\frac{\p G}{\p u}&=F^{ij}(\frac{\p\nu_{n+1}}{\p u}\delta_{ij}+z_n\cos u\ a_{ij}^E)\\
		&=(\nu_{n}-\nu_{n+1}\cot u)\sum F^{ii}+F^{ij}a_{ij}\cot u.
	\end{align*}
	The calculation of $G^s=\partial G/\p u_s$ is more complicated so we compute it later when it is needed. It follows from above that the condition $(\ref{fi>0})$ implies the equation $(\ref{graph equation})$ is elliptic for $u$ if $A[u]\in \mathcal{M}_K$ while $(\ref{f concave})$ implies that $G(D^2u,Du,u,y)$ is concave with respect to $D^2u.$

	\section{Gradient estimates}\label{gradientestimatessection}
	In this section, we obtain some gradient estimates for the fully nonlinear elliptic equation.  
	
	Since the hypersurface $\Sigma$ in $\h$ is also a hypersurface in $\mathbb{R}^{n+1}$, we add $E$  to distinguish quantities in two cases. For example, we denote the induced hyperbolic metric and Levi-Civita connection on $\Sigma$ by $g$ and $\nabla$, respectively while we denote induced Euclidean ones by $g^E$ and $\nabla^E$, respectively. We denote ambient connections in $\mathbb{R}^{n+1}$ and $\h$ by $D^E$ and $D$, respectively. All the calculations in this section are local, so we choose local coordinates $\{x_1,\cdots,x_n\}$ and  the local frame $\{e_i\}_{i=1}^n$ on $\Sigma$ where $e_i=\p_{x_i}$. Let $\nu$ be the unit normal vector to $\Sigma$ with respect to the Euclidean metric, this also determines a unit normal vector $\bold{n}$ with respect to the hyperbolic metric by the relation
	\[\bold{n}=z_{n+1}\nu.\]
	Denote $\nu_{n+1}=\nu\cdot E_{n+1}$ and $\nu_n=\nu\cdot E_n$ where $\cdot$ is the usual dot product in $\mathbb{R}^{n+1}.$ Sometimes we also use $<,>$ to represent $\cdot.$
	
	The hyperbolic metric $g$ and Euclidean metric $g^E$ of $\Sigma$ satisfies
	\[g^E_{ij}=z_{n+1}^2 g_{ij}.\]
	The hyperbolic second fundamental form and Euclidean second fundamental form are
	\[h_{ij}=g(D_{e_i}e_j,\bold{n}),\ \ \text{and}\ \ \ \ \ h^E_{ij}=g^E(D^E_{e_i}e_j,\nu)\]
	satisfying
	\begin{equation}\label{relationoftwofundamentalforms}
		h_{ij}=\frac{1}{z_{n+1}}h^E_{ij}+\frac{\nu_{n+1}}{z_{n+1}^2}g^E_{ij}.\end{equation}
	Thus
	\begin{equation}\label{relationbetweenprincipalcurvatures}
		\kappa_i=z_{n+1}k_i^E+\nu_{n+1}
	\end{equation}
	where $\kappa^E_1,\cdots,\kappa^E_n$ and $\kappa_1,\cdots,\kappa_n$ are the hyperbolic and Euclidean principal curvatures, respectively. 
	
	In particular, hereafter we choose $e_1,\cdots, e_n$ to be a geodesic orthonormal frame with respect to the hyperbolic metric, then $g_{ij}=\delta_{ij}$ and $g^E_{ij}=z_{n+1}^2\delta_{ij}.$ For a smooth function $u$ on $\Sigma$, by standard calculation under conformal change we obtain that
	\begin{align}\label{hessianchange}
		\nabla_{ij}u=\nabla^E_{ij}u+\frac{1}{z_{n+1}}[(z_{n+1})_iu_j+(z_{n+1})_ju_i-(z_{n+1})_ku_k\delta_{ij}].
	\end{align}
	Here $\nabla_{ij}u=\nabla^2u(e_i,e_j).$
	
	\subsection{Test functions}The following results will be used in the proof of gradient estimates.
	\begin{proposition}\label{hessianofnu}
		Under above setting, we have
		that for $i,j=1,\cdots,n$
		\begin{align*}
			\nabla^E_{ij}\nu_{n}&=-\frac{h^E_{ik}h^E_{kj}\nu_n+(\nabla^E_kh^E_{ij})(z_n)_k}{z_{n+1}^2}+\frac{2h^E_{ij}(\zn)_k(z_n)_k}{z^3_{n+1}}\\
			&+\frac{h^E_{kj}((\zn)_i(z_n)_k-(\zn)_k(z_n)_i)}{z^3_{n+1}}+\frac{2h^E_{ik}((\zn)_j(z_n)_k-(z_n)_j(z_{n+1})_k)}{z^3_{n+1}}
		\end{align*}
		and
		
		\begin{align*}
			\nabla^E_{ij}\nu_{n+1}&=-\frac{h^E_{ik}h^E_{kj}\nu_{n+1}+(\nabla^E_kh^E_{ij})(z_{n+1})_k}{z_{n+1}^2}+\frac{2h^E_{ij}(\zn)_k^2}{z^3_{n+1}}.
		\end{align*}

	\end{proposition}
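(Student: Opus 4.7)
The plan is to prove both identities by iterating the Weingarten relation in the hyperbolic geodesic orthonormal frame $\{e_i\}$. Since the Euclidean induced metric satisfies $g^E_{ij}=z_{n+1}^2\delta_{ij}$ in this frame, index-raising for the Euclidean shape operator produces the Weingarten identity
\[ D^E_{e_i}\nu=-\frac{h^E_{ik}}{z_{n+1}^2}\,e_k. \]
Writing $\nu_\alpha=\nu\cdot E_\alpha$ for the constant ambient vector $E_\alpha$ (with $\alpha=n$ or $n+1$) and using that $(z_\alpha)_k=e_k\cdot E_\alpha$, this immediately yields the first-order formula
\[ e_i(\nu_\alpha)=-\frac{h^E_{ik}(z_\alpha)_k}{z_{n+1}^2}, \]
which I would use as the starting point for the Hessian.

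To compute $\nabla^E_{ij}\nu_\alpha=e_i(e_j\nu_\alpha)-(\nabla^E_{e_i}e_j)\nu_\alpha$, I would differentiate the first-order formula and invoke three structural ingredients. First, to replace $e_i(h^E_{jk})$ by a covariant expression I would use the Codazzi identity $\nabla^E_i h^E_{jk}=\nabla^E_k h^E_{ij}$, producing two Christoffel corrections. Second, to handle $e_i((z_\alpha)_k)$ I would apply the Gauss formula $D^E_{e_i}e_k=\nabla^E_{e_i}e_k+h^E_{ik}\nu$ and project onto $E_\alpha$, obtaining $e_i((z_\alpha)_k)=h^E_{ik}\nu_\alpha+\Gamma^{(g^E)m}_{ik}(z_\alpha)_m$; the normal piece $h^E_{ik}\nu_\alpha$ is what generates the $-h^E_{ik}h^E_{kj}\nu_\alpha/z_{n+1}^2$ term once it is multiplied by the outer $-h^E_{jk}/z_{n+1}^2$. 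Third, to evaluate every Christoffel correction explicitly I would use the conformal change formula applied to $g^E=z_{n+1}^2 g$, which at the base point $p$ of the geodesic frame yields
\[ \Gamma^{(g^E)m}_{ij}=\frac{1}{z_{n+1}}\bigl[(z_{n+1})_i\delta^m_j+(z_{n+1})_j\delta^m_i-(z_{n+1})_m\delta_{ij}\bigr]. \]

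The final step is algebraic bookkeeping. After substituting the Codazzi reformulation together with the Gauss-formula expansion, I would apply Codazzi symmetry a second time to convert $(\nabla^E_i h^E_{jk})(z_\alpha)_k$ into $(\nabla^E_k h^E_{ij})(z_\alpha)_k$, matching the form stated in the proposition. The remaining pieces of order $z_{n+1}^{-3}$ come from pairing the factors $(z_{n+1})_\cdot/z_{n+1}$ in the Christoffel symbols against the factors $(z_\alpha)_\cdot$ from both the first-order formula and the Gauss-formula correction. For $\alpha=n$ these combinations organize into the explicit antisymmetric expressions $h^E_{kj}((z_{n+1})_i(z_n)_k-(z_{n+1})_k(z_n)_i)$ and $h^E_{ik}((z_{n+1})_j(z_n)_k-(z_n)_j(z_{n+1})_k)$, together with the diagonal piece $2h^E_{ij}(z_{n+1})_k(z_n)_k$; for $\alpha=n+1$ the symmetry $(z_{n+1})_i(z_{n+1})_k=(z_{n+1})_k(z_{n+1})_i$ annihilates the antisymmetric analogues and leaves only the term $2h^E_{ij}((z_{n+1})_k)^2/z_{n+1}^3$.

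The main obstacle is the index bookkeeping rather than any conceptual difficulty. Because the frame $\{e_i\}$ is orthonormal only for the hyperbolic metric $g$, the induced Euclidean connection on $\Sigma$ carries nonzero Christoffel symbols at the base point. These Christoffel corrections appear in three distinct places---inside the outer second covariant derivative, inside the Codazzi reformulation of $e_i(h^E_{jk})$, and inside the Gauss-formula expansion of $e_i((z_\alpha)_k)$---and must be collected and simplified term by term to recover the precise algebraic form stated in the proposition.
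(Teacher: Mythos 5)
Your proposal follows essentially the same approach as the paper's proof: you start from the Weingarten formula $e_i(\nu_\alpha)=-h^E_{ik}(z_\alpha)_k/z_{n+1}^2$, use the conformally-changed Euclidean Christoffel symbols $\nabla^E_{\p_i}\p_j=\frac{1}{z_{n+1}}[(z_{n+1})_i\p_j+(z_{n+1})_j\p_i-\delta_{ij}(z_{n+1})_m\p_m]$ in the hyperbolic geodesic frame, expand $e_i((z_\alpha)_k)$ via the Gauss formula, and invoke Codazzi symmetry to convert the differentiated second fundamental form into a covariant derivative. These are precisely the four ingredients (p3.1.1)--(p3.1.4) the paper uses, assembled in the same order, so your plan is correct and matches the paper's strategy.
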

	
	\begin{proof}
		First, we have that for $\ell=1,\cdots,n+1,$
		\begin{equation}\label{p3.1.1}
			\nabla^E_{\p_i}\nu_{l}=-\frac{h_{ik}^E(z_{l})_k}{z_{n+1}^2}\end{equation}
		and
		\begin{equation}\label{p3.1.2}
			\nabla^E_{\p_i}\p_j=\frac{(z_{n+1})_i\p_j+(z_{n+1})_j\p_i-\delta_{ij}(z_{n+1})_m\p_m}{z_{n+1}},
		\end{equation}
		in particular, $\nabla^E_{\partial_i}\p_j=\nabla^E_{\partial_j}\p_i.$
		
		Moreover,
		\begin{align}\label{p3.1.3}
			\nabla^E _{kj} z_\ell&=h^E_{jk}\nu_l+\frac{<\nabla^E_{\partial_j}\partial_k,\p_m >(z_l)_m}{\zn^2}  \nonumber\\
			&=h^E_{jk}\nu_l+\frac{(z_{n+1})_j(z_l)_k+(z_{n+1})_k (z_l)_j-\delta_{jk}(z_{n+1})_m(z_l)_m}{z_{n+1}}
		\end{align}
		By Codazzi equation, we have
		\begin{equation}\label{p3.1.4}
			h^E_{ikj}=h^E_{ijk}-h^E(\nabla^E_{\p_k}\p_i,\p_j)+h^E(\nabla^E_{\p_j}\p_i,\p_k)
		\end{equation}
		Then it follows from (\ref{p3.1.1}),(\ref{p3.1.2}),(\ref{p3.1.3}) and (\ref{p3.1.4}) that
		\begin{align*}
			\nabla^E_{ij}\nu_l&=(\nu_l)_{ij}-(\nabla^E_{\partial_i}\p_j)\nu_l\\
			&=-\frac{h^E_{ikj}(z_{l})_k+h^E_{ik}(z_{l})_{kj}}{z^2_{n+1}}+2\frac{h^E_{ik}(z_{l})_k(\zn)_j}{\zn^3}+\frac{h^E(\nabla^E_{\p_i}\p_j,\p_k)}{z^2_{n+1}}    (z_l)_k  \\
			&=-\frac{h^E_{ik}h^E_{kj}\nu_l+(\nabla^E_kh^E_{ij})(z_l)_k}{z_{n+1}^2}+\frac{2h^E_{ik}((\zn)_j(z_l)_k-(\zn)_j(z_{n+1})_k)}{z^3_{n+1}}\\
			&+\frac{h^E_{ij}(z_{n+1})_m^2}{z^3_{n+1}}+\frac{h^E(\nabla^E_{\p_k}\p_i,\p_j)}{z^2_{n+1}}(z_l)_k\\
			&=-\frac{h^E_{ik}h^E_{kj}\nu_l+(\nabla^E_kh^E_{ij})(z_l)_k}{z_{n+1}^2}+\frac{2h^E_{ij}(\zn)_k(z_l)_k}{z^3_{n+1}}\\
			&+\frac{h^E_{kj}((\zn)_i(z_l)_k-(\zn)_k(z_l)_i)}{z^3_{n+1}}+\frac{h^E_{ik}((\zn)_j(z_l)_k-(z_l)_j(z_{n+1})_k)}{z^3_{n+1}}.
		\end{align*}
		This yields identities in the Proposition for $
		\ell=n$ and $\ell=n+1$.
	\end{proof}

	\begin{theorem}\label{gradientcalculation}
		Let $\Sigma$ be a smooth hypersurface in $\h$ satisfying $f(\kappa)=\sigma$ and $f\in \mathcal{C}_n$. Then in any local frame, we have
		\begin{equation}\label{t3.2.1}
			F^{ij}\nabla_{ij}\cot u=(\nu_n-\nu_{n+1}\cot u )\sigma+\cot u\sum f_i
		\end{equation}
		and 
		\begin{equation}\label{t3.2.2}
			F^{ij}\nabla_{ij}(\nu_n-\nu_{n+1}\cot u)=-\sigma \cot u -(\nu_n-\nun\cot u )\sum f_i\kappa_i^2.
		\end{equation}
	\end{theorem}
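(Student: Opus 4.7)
The plan is to reduce both identities to the single ambient fact that the hyperbolic Hessian of $\cot u$ on $\mathbb{H}^{n+1}$ is
\[
D^2(\cot u) = (\cot u)\, g^{\mathbb{H}^{n+1}}.
\]
Geometrically $\cot u = z_n/z_{n+1}$ is a distance-type function adapted to the equidistant hyperplane $\{z_n = 0\}$, and I would verify the identity by computing $D(\cot u) = z_{n+1}E_n - z_n E_{n+1}$ directly from the ambient Christoffels in the half-space model and then checking $D_Y D(\cot u) = (\cot u)\, Y$ for every ambient vector $Y$ (a single representative point suffices, since the remaining points are reached by ambient hyperbolic isometries that preserve both sides). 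Granting this identity, $(\ref{t3.2.1})$ falls out from a one-line application of the Gauss formula for scalar functions:
\[
\nabla_{ij}(\cot u) = D^2(\cot u)(e_i, e_j) + h_{ij}\,\bold{n}(\cot u) = (\cot u)\,\delta_{ij} + h_{ij}\,\Phi,
\]
with $\Phi := \bold{n}(\cot u) = g^{\mathbb{H}^{n+1}}(\bold{n}, D\cot u) = \nu_n - \nu_{n+1}\cot u$, i.e.\ exactly the quantity appearing in the theorem. Contracting with $F^{ij}$ and using Euler's relation $F^{ij}h_{ij} = f(\kappa) = \sigma$ together with $F^{ij}\delta_{ij} = \sum f_i$ then yields $(\ref{t3.2.1})$.

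For $(\ref{t3.2.2})$ I would differentiate $\Phi$ twice on $\Sigma$. Combining Weingarten $D_{e_i}\bold{n} = -h_{ij}e_j$ with the ambient identity $D_{e_i}D(\cot u) = (\cot u)\,e_i$ and the orthogonality $g^{\mathbb{H}^{n+1}}(\bold{n}, e_i) = 0$, the first derivative becomes $e_i(\Phi) = -h_{ij}\,e_j(\cot u)$. A second covariant derivative in a geodesic orthonormal frame at the point in question gives
\[
\nabla_{ij}\Phi = -(\nabla_i h_{jk})\,e_k(\cot u) - h_{jk}\nabla_{ik}(\cot u).
\]
Contracting with $F^{ij}$: the first term vanishes because Codazzi in the constant-curvature ambient makes $\nabla_i h_{jk}$ totally symmetric, so $F^{ij}\nabla_i h_{jk} = F^{ij}\nabla_k h_{ij} = \nabla_k\sigma - F^{ij,pq}h_{ij}\nabla_k h_{pq} = 0$ (where $F^{ij,pq}h_{ij} = 0$ is the degree-zero homogeneity of $F^{ij}$ in $h$). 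For the second term I would reinsert $\nabla_{ik}(\cot u) = (\cot u)\delta_{ik} + h_{ik}\Phi$ from the previous paragraph, then apply $F^{ij}h_{jk}h_{ki} = \sum f_i\kappa_i^2$ and $F^{ij}h_{ij} = \sigma$; this delivers $(\ref{t3.2.2})$.

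The only step I expect to require any real work is establishing the ambient identity $D^2(\cot u) = (\cot u)\,g^{\mathbb{H}^{n+1}}$; once it is in place, everything else is standard Gauss--Codazzi together with the curvature equation. A more computational alternative, staying entirely within induced quantities on $\Sigma$, would be to combine the conformal change formula between $\nabla$ and $\nabla^E$ with Proposition $\ref{hessianofnu}$, but this obscures the structure and increases bookkeeping considerably.
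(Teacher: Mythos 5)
Your argument is correct, and it is a genuinely different route from the paper's proof. The paper proves both identities by working entirely in the Euclidean picture: it first establishes Proposition~\ref{hessianofnu} (the Euclidean Hessian of $\nu_n$ and $\nu_{n+1}$ on $\Sigma$ in terms of $h^E$, $\nabla^E h^E$, and height functions), then applies the conformal-change formula~(\ref{hessianchange}) for Hessians, differentiates the curvature equation written in the Euclidean variables, and combines the three pieces by a rather heavy cancellation. You instead prove the single ambient identity $D^2(\cot u)=(\cot u)\,g^{\mathbb{H}^{n+1}}$, recognize that $\nu_n-\nu_{n+1}\cot u = g^{\mathbb{H}}(\bold{n}, D\cot u)$, and then both~(\ref{t3.2.1}) and~(\ref{t3.2.2}) drop out from the Gauss formula, the Weingarten relation, the Codazzi symmetry of $\nabla h$, and differentiation of $F(h)=\sigma$. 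This is the standard ``linear function on the hyperboloid / support function'' mechanism, and it makes the structure of both identities transparent in a way that the paper's conformal-change bookkeeping does not; the trade-off is that you must verify the ambient Hessian identity (which the paper never isolates), but that is an elementary half-space-model computation that you correctly reduce to $D(\cot u)=z_{n+1}E_n - z_n E_{n+1}$.

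Two small points worth tightening. First, the parenthetical justification for the equivariance argument (``a single representative point suffices, since the remaining points are reached by ambient hyperbolic isometries that preserve both sides'') is a bit loose: an isometry $\psi$ turns the identity for $\cot u$ at $\psi(p)$ into the identity for $\cot u\circ\psi$ at $p$, and $\cot u\circ\psi$ is not $\cot u$; the cleanest statement is that $z_n/z_{n+1}$ is a linear coordinate in the hyperboloid model and linear coordinates always satisfy $D^2\phi=\phi g$. Alternatively, just do the direct four-case Christoffel computation, which is short. Second, the line $F^{ij}\nabla_k h_{ij} = \nabla_k\sigma - F^{ij,pq}h_{ij}\nabla_k h_{pq} = 0$ is correct but more convoluted than necessary: since $\sigma = F(h)$, the chain rule gives $\nabla_k\sigma = F^{ij}\nabla_k h_{ij}$ directly, with no second-derivative correction (your version amounts to differentiating the Euler identity $F=F^{ij}h_{ij}$ and then using degree-zero homogeneity of $F^{ij}$ to kill the extra term, which is valid but circuitous). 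Neither of these affects the correctness of the proof.
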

	
	\begin{proof}
		Since $\cot u=\frac{z_n}{z_{n+1}}$, we have that
		\[\partial_j\cot u=\frac{(z_n)_jz_{n+1}-(z_{n+1})_jz_n}{z_{n+1}^2}\]
		thus
		\begin{align*}
			\nabla^E_{ij}\cot u&=\frac{(z_n)_j(z_{n+1})_i-(z_n)_i(z_{n+1})_j+z_{n+1}h^E_{ij}\nu_n-z_nh_{ij}^E\nu_{n+1}}{z_{n+1}^2}\\
			&+\frac{-2z_{n+1}^2(z_{n+1})_i(z_n)_j+2z_nz_{n+1}(z_{n+1})_i(z_{n+1})_j}{z_{n+1}^4}.
		\end{align*}
		Furthermore, by (\ref{hessianchange}) we get that
		\begin{align*}
			\nabla_{ij}\cot u&=\frac{z_{n+1}\nu_n-z_n\nu_{n+1}}{z_{n+1}^2}h_{ij}^E-\frac{z_{n+1}(z_n)_k(z_{n+1})_k-z_n(z_{n+1})_k(z_{n+1})_k}{z^3_{n+1}}\delta_{ij}.
		\end{align*}
		Since $h^E_{ij}=z_{n+1}h_{ij}-z_{n+1}\nu_{n+1}\delta_{ij}$ and
		\[(z_n)_k(z_{n+1})_k=-z_{n+1}^2\nu_n\nu_{n+1},\]
		\[(z_{n+1})_k(z_{n+1})_k=z_{n+1}^2(1-\nu_{n+1}^2).\]
		Hence
		\[\nabla_{ij}\cot u=(\nu_n-\nu_{n+1}\cot u)h_{ij}+\cot u \delta_{ij}.\]
		This proves the first identity (\ref{t3.2.1}).
		
		We now continue to prove the second one. We calculate $\nabla_{ij}\nu_{n}$ and $\nabla_{ij}\nun$ at first.

		It follows from (\ref{hessianchange}) that
		\[\nabla_{ij}\nu_n=\nabla_{ij}^E\nu_n+\frac{1}{z_{n+1}}[(z_{n+1})_i(\nu_n)_j+(z_{n+1})_j(\nu_n)_i-(z_{n+1})_k(\nu_n)_k\delta_{ij}].\]
		Then
		\begin{align}\label{fijnun+1}
			F^{ij}\nabla_{ij}\nu_{n}&=-\frac{\nu_n}{z_{n+1}^2} F^{ij}h^E_{ik}h^E_{kj}-\frac{(z_{n})_k}{z_{n+1}^2}F^{ij}\nabla^E_kh^E_{ij}+\frac{2}{z_{n+1}}F^{ij}(z_{n+1})_i(\nu_n)_j\\
			&+\frac{2F^{ij}h^E_{ij}(z_{n+1})_k(z_n)_k}{z^3_{n+1}}-\frac{1}{z_{n+1}}(z_{n+1})_k(\nu_n)_k\sum F^{ii}.\nonumber
		\end{align}
		By differentiating the equation
		\[\sigma=F(h_{ij})=F(\{h^E_{ij}/u+\nu_{n+1}\delta_{ij}\}),\]
		we obtain
		\[F^{ij}(z_{n+1}\nabla_k^Eh^E_{ij}-(z_{n+1})_kh^E_{ij}+(\nun)_{k}z_{n+1}^2\delta_{ij})=0.\]
		Thus, by (\ref{relationoftwofundamentalforms}) we have
		\[F^{ij}\nabla_k^Eh^E_{ij}=(z_{n+1})_k(\sigma-\nun\sum F^{ii})-(\nun)_kz_{n+1}\sum F^{ii}.\]
		Furthermore, 
		\begin{align*}
			F^{ij}h^E_{ik}h^E_{kj}&=z_{n+1}^2F^{ij}(h_{ik}-\nun \delta_{ik})(h_{kj}-\nun \delta_{kj})\\
			&=z^2_{n+1}(\sum f_i\kappa_i^2-2\sigma\nun+\nun^2\sum f_i).
		\end{align*}
		Hence, it follows from (\ref{fijnun+1}) and Proposition \ref{hessianofnu} that
		\begin{align}\label{t3.2.3}
			F^{ij}\nabla_{ij}\nu_{n}&=-\nu_n (\sum f_i\kappa_i^2-2\sigma\nun+\nun^2\sum f_i)+\frac{2}{z_{n+1}}F^{ij}(z_{n+1})_i(\nu_n)_j\\
			&+\frac{(z_n)_k(\nun)_k -(z_{n+1})_k(\nu_n)_k}{z_{n+1}}\sum F^{ii}+\frac{(z_{n+1})_k(z_n)_k}{z_{n+1}^2}(\sigma-\nun\sum F^{ii})\nonumber\\
			&+2\left(\frac{F^{ij}h^E_{ik}((z_{n+1})_j(z_n)_k-(z_{n})_j(z_{n+1})_k)}{z^3_{n+1}}\right)\nonumber\\
			&=-\nu_n (\sum f_i\kappa_i^2-2\sigma\nun+\nun^2\sum f_i)-\frac{2 F^{ij}h^E_{ik}(z_{n})_j(z_{n+1})_k}{z^3_{n+1}}\nonumber\\
			&+\frac{(z_{n+1})_k(z_n)_k}{z_{n+1}^2}(\sigma-\nun\sum F^{ii})\nonumber
		\end{align}
		where we have used (\ref{p3.1.1}) in the second equality.
		
		Similarly, we can calculate by following above lines to obtain
		\begin{align}\label{t3.2.4}
			F^{ij}\nabla_{ij}\nu_{n+1}&=-\nun (\sum f_i\kappa_i^2-2\sigma\nun+\nun^2\sum f_i)\\
			&+\frac{2}{z_{n+1}}F^{ij}(z_{n+1})_i(\nun)_j+\frac{(z_{n+1})^2_k}{z_{n+1}^2}(\sigma-\nun\sum F^{ii})\nonumber.
		\end{align}
		
		Finally, since
		\[\sum (z_{n+1})_k(z_n)_k=z^2_{n+1}(E_{n+1}-\nu_{n+1}\nu)(E_n-\nu_n\nu)=-z^2_{n+1}\nu_n\nu_{n+1},\]
		\[\sum (z_{n+1})_k(z_{n+1})_k=z^2_{n+1}(1-\nu_{n+1}^2),\]
		it follows from (\ref{t3.2.1}),(\ref{t3.2.3}),\ref{t3.2.4}) and (\ref{p3.1.1}) that
		\begin{align*}
			F^{ij}\nabla_{ij}(\nu_n-\nun \cot u)&= F^{ij}\nabla_{ij}\nu_n-\cot u F^{ij}\nabla_{ij} \nun-\nun f^{ij}\nabla_{ij}\cot u\\
			&-2F^{ij}(\nun)_i(\cot u)_j\\
			&=-\sigma \cot u -(\nu_n-\nun\cot u )\sum f_i\kappa_i^2.
		\end{align*}
		This gives the equation (\ref{t3.2.2}), thus completing the proof of the theorem.
		
	\end{proof}
	\subsection{$C^0$ estimate and gradient estimate}

	A hypersurface in hyperbolic space $\h$ with a constant curvature function has an equidistant hyperplane as a natural barrier. An equidistant hyperplane with a constant curvature $\sigma\in(0,1)$ can either be a Euclidean half-hyperplane that intersects infinity at an angle of $\theta=\arccos\sigma$, or a hypersphere with a special center and radius. In this context, we utilize the latter to establish several bounds for the function $u$, whose graph constitutes the hypersurface.
	
	Fix $\epsilon>0$ a small positive number. Let $\Sigma$ be an orientable, embedded hypersurface in $\h$ with $\partial \Sigma\subset P_{\epsilon}$ (see Definition \ref{thetahyperplane}) such that it separates $P_\epsilon^+=\{(z,\theta)\in \h:\theta>\epsilon\}$ into a bounded region and an unbounded region. Let $\Omega$ be the bounded domain in $\mathbb{R}_+^n\times \{0\}$ such that its rotation  by angle $\epsilon$ to $P_\epsilon$, denoted by $\Omega_\epsilon$, has boundary $\partial\Sigma.$ Assume that $\kappa[\Sigma]\in K_+$ and $f(\kappa[\Sigma])=\sigma\in (0,\cos\epsilon)$. The calculations are done with respect to the unit outer normal that points into the unbounded region. We remind readers that $\partial \Omega$ doesn't have to be connected. In this section, we only require $f$ to be in $\mathcal{C}_n$, then the linearized operator of the fully nonlinear equation is elliptic so that we can make use of the maximum principle.
	
	We let $B_{1}, B_2$ be  balls in $\mathbb{R}^{n+1}$ of radius $R$ centered at $a=(a',-\sigma R)$ and $b=(b', \sigma R)$ where $\sigma\in (0,\cos\epsilon)$. Then $\p B_1\cap \h$ and $\p B_2\cap \h$ are hypersurfaces of constant curvature $\sigma$ with respect to its outer unit normal and inner unit normal, respectively. We have the following result (c.f. \cite{guanjems}).
	\begin{lemma}\label{geometricmaximumprinciple}
		The followings hold:
		\begin{enumerate}
			\item[(1),] $\Sigma$ is contained in the wedge by $P_\epsilon$ and the vertical geodesic hyperplane through origin.
			\item[(2),] If $\p\Sigma\subset B_1$, then $\Sigma\subset B_1.$
			\item[(3),] If $B_1\cap P_\epsilon\subset \Omega_\epsilon $, then $B_1\cap \Sigma=\emptyset.$
			\item[(4),]  If $B_2\cap \Omega^\epsilon=\emptyset$, then $B_2\cap\Sigma=\emptyset.$
		\end{enumerate}
	\end{lemma}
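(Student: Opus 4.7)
The plan is to apply the tangential (strong) maximum principle for the fully nonlinear elliptic equation $f(\kappa)=\sigma$. Since $f\in\mathcal{C}_n$ has $f_i>0$, the linearized operator is positive definite on strictly locally convex hypersurfaces, so two admissible hypersurfaces of curvature $\sigma$ that touch tangentially at an interior common point with matching orientation and one lying locally on one side of the other must coincide near the contact; if one has strictly smaller (respectively larger) curvature than $\sigma$ with compatible orientation, no such interior tangency is possible. All four claims will reduce to sliding an auxiliary family of umbilical barriers and arguing that a first interior contact is forbidden. The barriers come from the four umbilical families in $\h$ (geodesic hyperplanes, equidistant hyperplanes $P_\theta$ of curvature $\cos\theta$, horospheres, and geodesic spheres) together with the hyperspheres $\partial B_1\cap\h$ and $\partial B_2\cap\h$. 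A direct conformal computation using the centers $a=(a',-\sigma R)$ and $b=(b',\sigma R)$ shows these last two carry constant hyperbolic curvature $\sigma$ with respect to the outer and inner Euclidean normal, respectively; this is the arithmetic fact that drives the proof.

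For (1), I would slide an equidistant hyperplane $P_\theta$ with $\theta>\arccos\sigma$, whose curvature $\cos\theta$ is strictly less than $\sigma$, from outside the wedge toward $\Sigma$; any first interior tangency, combined with the chosen orientation of $\Sigma$ (outer normal pointing into the unbounded region), would force a local inequality of curvatures going the wrong way, contradicting the comparison principle. The $P_\epsilon$ side of the wedge is already ensured by the assumption $\Sigma$ separates $P_\epsilon^+$. For (2), start from a large translate of $\partial B_1\cap\h$ entirely containing $\Sigma$ and deform it back to $B_1$ itself; if $\Sigma\not\subset B_1$ there is a first interior tangency between two hypersurfaces of identical curvature $\sigma$ with $\Sigma$ locally on the outside. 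The strong maximum principle forces coincidence in a neighborhood, and an open-closed continuation propagates the equality across the component, contradicting $\partial\Sigma\subset B_1$ once the propagation reaches the boundary.

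Parts (3) and (4) proceed by the same mechanism but with the hypersphere acting as a one-sided barrier. In (3), the hypothesis $B_1\cap P_\epsilon\subset\Omega_\epsilon$ places $\partial B_1\cap\h$ strictly inside the region bounded by $\Sigma\cup\Omega_\epsilon$; if $B_1\cap\Sigma\neq\emptyset$, translate $B_1$ to a copy disjoint from $\Sigma$ and deform back, yielding a first interior tangency of two $\sigma$-curvature hypersurfaces with $\Sigma$ on the forbidden side. In (4), the condition $B_2\cap\Omega^\epsilon=\emptyset$ locates $\partial B_2\cap\h$ on the opposite side; its inner-normal orientation reverses the role of the barrier, and the same contradiction is extracted.

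The main obstacle will be the careful bookkeeping of orientations at a potential contact point, because in parts (2)--(4) both surfaces have the same curvature $\sigma$ and only the side on which $\Sigma$ sits locally distinguishes the contradictory configuration from the allowed one. A related subtle point is ruling out boundary contacts during the deformation: one must verify that the barrier cannot first touch $\partial\Sigma\subset P_\epsilon$ before touching the interior of $\Sigma$. This is precisely what the hypotheses $\partial\Sigma\subset B_1$, $B_1\cap P_\epsilon\subset\Omega_\epsilon$, and $B_2\cap\Omega^\epsilon=\emptyset$ are designed to exclude, using the explicit geometry of the equidistant hyperplane $P_\epsilon$ and the support of $\partial\Sigma$.
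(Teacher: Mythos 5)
Your overall strategy — sliding umbilic barriers and the $\sigma$--curvature hyperspheres and invoking the tangency/maximum principle to preclude a first interior contact with the same orientation — is indeed the paper's strategy, and the structure of parts (2)--(4) matches the argument of Guan--Spruck that the paper cites. However, there are two concrete gaps in the execution.

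First, for part (1) your barrier has the wrong sign. You slide $P_\theta$ with $\theta>\arccos\sigma$, so $\cos\theta<\sigma$. At a first interior tangency where $\Sigma$ lies on the wedge side of the barrier, the tangency inequality compares $f(\kappa[\Sigma])=\sigma$ with $f(\kappa[P_\theta])=\cos\theta$ in a direction that gives $\sigma\ge\cos\theta$, which is already true and yields no contradiction. The paper instead translates $P_\epsilon$ itself, whose curvature $\cos\epsilon$ is \emph{larger} than $\sigma$, and uses the geodesic hyperplane (curvature $0$) for the other wall of the wedge; the contradictions then come from $\sigma<\cos\epsilon$ and $\sigma>0$ respectively. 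Relatedly, your claim that the $P_\epsilon$ side of the wedge is ``already ensured by the separation assumption'' is not correct: that $\Sigma$ separates $P_\epsilon^+$ does not by itself preclude $\Sigma$ from dipping below $P_\epsilon$, and the paper expressly runs a barrier argument for exactly this direction.

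Second, for parts (2) and (3) you deform by translations, but the correct one-parameter family (as in Guan--Spruck's Lemma 3.1, which the paper invokes) is the family of dilations of $B_1$ centered at the ideal point $(a',0)$, i.e.\ $B_1^t$ with center $(a',-\sigma Rt)$ and radius $Rt$. This choice is essential: for (2) every $B_1^t$ with $t\ge 1$ contains $B_1\supset\partial\Sigma$, and for (3) every $B_1^t$ with $t\le 1$ satisfies $B_1^t\cap P_\epsilon\subset B_1\cap P_\epsilon\subset\Omega_\epsilon$, so in both cases a first contact during the deformation is forced to be interior. A ``large translate of $\partial B_1\cap\h$ entirely containing $\Sigma$'' does not exist, since a translate has fixed radius; and for (3) an intermediate translate need not have its trace on $P_\epsilon$ inside $\Omega_\epsilon$, so your deformation could first touch $\partial\Sigma$, which the maximum principle does not rule out. (For part (4) the paper notes that a horizontal translation is also acceptable, so that step in your proposal is fine.) Once you replace the translations in (2)--(3) by the dilations from $(a',0)$ and fix the barrier in (1), your argument matches the paper's.
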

	\begin{proof}
		For (1), If $\Sigma$ goes through below $P_\epsilon$, we can translate $P_\epsilon$ along positive $z_n$ direction far away, then translate it back until there is a touching point violating the maximum principle. Similarly, we can show it does not pass through the vertical geodesic plane through origin. In fact, it does not intersect the vertical geodesic plane through the most left point of $\Omega_\epsilon$. The proofs of $(2),(3),(4)$ follow exact same lines of \cite[Lemma 3.1]{guanjems} by expanding or shrinking balls with respect to $(a',0)$ or $(b',0).$ We can also prove $(4)$ by translating $B_2$ horizontally and using maximum principle. The key to prove all above statements is to avoid interior touching point with same directions of curvature.
	\end{proof}
	
	The conclusion (1) of Lemma \ref{geometricmaximumprinciple} implies   natural bounds: $\epsilon<u<\pi/2$. However, this is not enough, we need an upper bound depending on $\Omega$ and $\epsilon$ that is away from $\pi/2$, a lower bound depending on point $z$, $\epsilon$  and $\Omega$ that is uniform away from zero as $\epsilon\rightarrow 0$ in compact subsets of $\Omega$. 
	
	Let $d(\Omega)$ be the Euclidean diameter of $\Omega$ and let $D_{d(\Omega)/2}(p)$ be the n-dimensional ball in $P_\epsilon$ containing $\Omega_\epsilon$ centered at $p\in P_\epsilon$. Let $\ell(\Omega)$ be the distance between $p$ and $\partial \mathbb{R}^n_+$, i.e., $\ell(\Omega)=p\cdot z_n$. For a fixed point $z\in\Omega$, denote $d(z)=\operatorname{dist}(z,\p\Omega)$ and $\ell(z)=z\cdot E_n$. Let $D_{d(z)}(\tilde{z})$ be the n-dimensional ball in $P_\epsilon$ centered at $\tilde{z}$ (this is corresponding point of $z$ in $\Omega_\epsilon$) of radius $d(z)$ that is contained in $\Omega_\epsilon$.
	
	\begin{lemma}\label{c0estimate}

		There exist positive constants $C_1$ depending on $\epsilon, d(\Omega), \ell(\Omega)$ and $\sigma$ such that
		\[u\leq C_1<\frac{\pi}{2}\]
		and $C_2$ depending on $\epsilon,d(z),\ell(z)$ and $\sigma$ such that 
		\[u(z)\geq C_2>\epsilon\]
		in compact subsets of $\Omega.$

	\end{lemma}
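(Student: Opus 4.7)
The identity $\cos u=z_n/y_n$ at a point $p=(y',y_n\cos u,y_n\sin u)$ of $\Sigma$ above $y\in\Omega$ reduces both bounds to controlling the quotient $z_n/y_n$: the upper bound $u\leq C_1<\pi/2$ is equivalent to a global positive lower bound on $z_n/y_n$, while $u(z)\geq C_2>\epsilon$ is a pointwise lower bound strictly exceeding $\cos\epsilon$. I will establish each estimate by a separate barrier argument drawn from Lemma \ref{geometricmaximumprinciple}.

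For the upper bound, consider the totally geodesic vertical hyperplanes $V_t=\{z_n=t\}$ in $\mathbb{H}^{n+1}$, which have all principal curvatures equal to zero. Sweeping $V_t$ inward toward $\Sigma$ from large $t$ and invoking the maximum principle for the equation $f(\kappa)=\sigma$, interior tangential contact is ruled out because $\sigma>0=f(\kappa_{V_t})$. Hence $\min_\Sigma z_n$ is attained on $\partial\Sigma\subset P_\epsilon$, where $z_n=y_n\cos\epsilon$. Since $\Omega$ is contained in the Euclidean ball $B_{d(\Omega)/2}(p)$ with $p_n=\ell(\Omega)$, we conclude $\min_\Sigma z_n\geq\cos\epsilon(\ell(\Omega)-d(\Omega)/2)$ and $y_n\leq\ell(\Omega)+d(\Omega)/2$ on $\Sigma$, giving
\[
\cos u=\frac{z_n}{y_n}\geq\frac{\cos\epsilon\,(\ell(\Omega)-d(\Omega)/2)}{\ell(\Omega)+d(\Omega)/2},
\]
which defines the desired $C_1<\pi/2$.

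For the pointwise lower bound at $z\in\Omega$, let $\tilde z=(z',z_n\cos\epsilon,z_n\sin\epsilon)\in P_\epsilon$ be the rotated image of $z$ and set $r=d(z)$, $\ell=\ell(z)=z_n$. Note $r\leq\ell$ since $\Omega\subset\{y_n>0\}$. I construct the Euclidean ball $B_R=B_R(a',-\sigma R)\subset\mathbb{R}^{n+1}$ characterized by $B_R\cap P_\epsilon=D_r(\tilde z)$: solving $a=\tilde z+\sqrt{R^2-r^2}(0,\ldots,0,\sin\epsilon,-\cos\epsilon)$ together with $a_{n+1}=-\sigma R$ determines a unique positive $R$ explicit in $\epsilon,\sigma,r,\ell$, using $\sigma<\cos\epsilon$. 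By Lemma \ref{geometricmaximumprinciple}(3), $B_R\cap\Sigma=\emptyset$. The geodesic $\gamma_z$ through $z$ perpendicular to infinity (a Euclidean semicircle of radius $\ell$ in the plane fixing $z_1,\ldots,z_{n-1}$) meets $P_\epsilon$ at $\tilde z$, which lies in the interior of $B_R$. A direct differentiation of $|p(u)-a|^2=2\ell(\sin u\cdot a_n+\sigma R\cos u)>0$ on $(\epsilon,\pi/2)$ shows the distance to $a$ is strictly increasing along $\gamma_z$. Since $p(z)\in\gamma_z\cap\Sigma$ must lie outside $B_R$, the geodesic exits $B_R$ at a unique angle $u^*\in(\epsilon,\pi/2)$ and $u(z)\geq u^*=:C_2$, with the explicit dependence on $\epsilon,d(z),\ell(z),\sigma$ obtained by solving $|p(u^*)-a|^2=R^2$.

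The main technical point is verifying that the exit angle $u^*$ in the lower bound genuinely satisfies $u^*<\pi/2$, so that $C_2$ is a meaningful constraint. If $\gamma_z$ remained inside $B_R$ on all of $(\epsilon,\pi/2)$ then $p(z)$ would lie in $B_R$, contradicting part (3) of the lemma; hence exit before $\pi/2$ is automatic, and the constraint $r\leq\ell$ forced by $\Omega\subset\mathbb{R}^n_+$ is precisely what makes the ball small enough for this to be consistent. Analogously, the max principle in the upper bound needs a careful choice of orientation so that the comparison of $\Sigma$ (with $f(\kappa_\Sigma)=\sigma>0$) against $V_t$ (with $f(\kappa_{V_t})=0$) produces the contradiction on the correct side of the touching.
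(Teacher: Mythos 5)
Your lower bound reproduces the paper's argument: both pick the same sphere barrier $B_R$ with $B_R \cap P_\epsilon = D_{d(z)}(\tilde z)$, invoke Lemma~\ref{geometricmaximumprinciple}(3), and use the monotonicity of the distance to the center $a$ along the geodesic $\gamma_z$ to locate the exit angle.

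Your upper bound departs from the paper, and the departure introduces a genuine gap. The paper encloses $\Sigma$ in a sphere $B_1$ of constant curvature $\sigma$ with $B_1\cap P_\epsilon = D_{d(\Omega)/2}(p)$ via Lemma~\ref{geometricmaximumprinciple}(2), and bounds $u$ by the maximal angle attained on $\partial B_1\cap P_\epsilon^+$; the resulting constant depends on $\sigma$ and stays strictly below $\pi/2$. You instead use the totally geodesic barriers $V_t=\{z_n=t\}$ (essentially the second half of Lemma~\ref{geometricmaximumprinciple}(1)) plus the trivial bound $y_n\le\max_{\bar\Omega}y_n$. The problem is quantitative: the constant you produce,
\[
\cos C_1=\cos\epsilon\,\frac{\ell(\Omega)-d(\Omega)/2}{\ell(\Omega)+d(\Omega)/2},
\]
is nonpositive whenever $\ell(\Omega)\le d(\Omega)/2$, which is a perfectly admissible configuration for a bounded domain $\Omega\subset\mathbb{R}^n_+$ that extends close to the wall $\{y_n=0\}$. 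In that regime your inequality asserts nothing. The quantity $\ell(\Omega)-d(\Omega)/2$ does not control $\min_{\bar\Omega}y_n$ from below once it is nonpositive, so the step ``$\min_\Sigma z_n\ge\cos\epsilon(\ell(\Omega)-d(\Omega)/2)$'' fails to produce a \emph{positive} lower bound on $z_n/y_n$, and hence no $C_1<\pi/2$. The paper's sphere barrier is not subject to this degeneracy; it is the curvature $\sigma$ of $\partial B_1$, not the totally geodesic $V_t$, that supplies the missing information.

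There is also a directional slip: sweeping $V_t$ ``inward from large $t$'' detects $\max_\Sigma z_n$, whereas your conclusion concerns $\min_\Sigma z_n$, which requires moving $V_t$ outward from small $t$. And in either direction, the invocation ``$\sigma>0=f(\kappa_{V_t})$'' is shorthand for an orientation check you do not perform: at an alleged interior tangency $q$ one must verify that the outward normal of $\Sigma$ (into the unbounded component) agrees with the normal of $V_t$ for which the Euclidean second fundamental form of $\Sigma$ at $q$ is nonpositive, so that $\nu_{n+1}=0$ and (\ref{relationbetweenprincipalcurvatures}) force $\kappa_i\le 0$ there, contradicting strict convexity. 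The paper's proof of Lemma~\ref{geometricmaximumprinciple} flags exactly this point (``the key \ldots is to avoid interior touching point with same directions of curvature''), and your two-sentence summary elides it.
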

	
	\begin{proof}
		Let $B_1$ be a ball in $\mathbb{R}^{n+1}$ centered at $a=(a',-\sigma R)$ with radius R such that $B_1\cap P_\epsilon=D_{d(\Omega)/2}(p)$. By (2) of Lemma \ref{geometricmaximumprinciple}, $\Sigma$ is completely contained in $B_1\cap P_\epsilon^+$. Then 
		\[u\leq  \bar{u}\]
		where $\bar{u}$ is the maximal angle of point on $\partial B_1\cap P_\epsilon^+$  satisfying
		\[\cos \bar{u}=\frac{\sigma R}{R-(\ell(\Omega)-\sigma R \tan \bar{u})\sin \bar{u}}\]
		equivalent to
		\[\ell(\Omega)\sin \bar{u}+\sigma R \cos \bar{u}=R.\]
		So
		\begin{equation}\label{lemma3.4.1}
			\cos \bar{u}=\frac{\sigma R^2+\ell\sqrt{\ell^2+\sigma^2R^2-R^2}}{\sigma^2R^2+\ell^2}<1.
		\end{equation}
		By simple Euclidean geometry, $R$ satisfies
		\[(\ell(\Omega)\tan\epsilon+\sigma R/\cos\epsilon )^2+(d(\Omega)/2)^2=R^2.\]
		This implies that
		\begin{equation}\label{lemma3.4.2}
			\frac{\sigma \ell\sin\epsilon}{\cos^2\epsilon-\sigma^2}+\frac{d\cos\epsilon}{2\sqrt{\cos^2\epsilon-\sigma^2}}\leq R\leq \frac{\ell\sin\epsilon}{\cos\epsilon-\sigma}+\frac{d\cos\epsilon}{2\sqrt{\cos^2\epsilon-\sigma^2}} .
		\end{equation} 
		It is not hard to check that the right-hand side of (\ref{lemma3.4.1}) attains the maximum at the lower bound of $R$. Thus a uniform upper bound follows from (\ref{lemma3.4.1}) and (\ref{lemma3.4.2}).

		For the lower bound, for fixed $z\in \Omega$. Now let $B_1$ be a ball in $\mathbb{R}^{n+1}$ centered at $a=(a',-\sigma R)$ of radius R such that $B_1\cap P_\epsilon=D_{d(z)}(\tilde{z})$. By (3) of Lemma \ref{geometricmaximumprinciple}, $\Sigma$ is completely outside of $B_1$. Then
		\[u(z)\geq \epsilon+\arctan\left(\frac{R}{\ell(z)}(1-\frac{\sigma}{\cos\epsilon})-\tan\epsilon\right)\]
		where, by Euclidean geometry again, $R$ satisfies 
		\[(\ell(z)\tan\epsilon+\sigma R/\cos\epsilon )^2+(d(z)/2)^2=R^2\]
		which yields
		$$R\geq \frac{\sigma \ell(z)\sin\epsilon}{\cos^2\epsilon-\sigma^2}+\frac{d(z)\cos\epsilon}{2\sqrt{\cos^2\epsilon-\sigma^2}}.$$
		This completes the proof of the lower bound.

	\end{proof}
	
	As mentioned in the introduction, we also need the angle to be sufficiently small to have gradient estimates on the boundary.
	\begin{lemma}\label{boundarygradientestimate}
		Assume $\p\Omega$ is $C^2$. Then there exists $\epsilon>0$ sufficiently small, a positive constant $C_1=C_1(\Omega,r_2,\epsilon_0,\sigma)$, a negative constant $C_2=C_2(\Omega,r_1,\epsilon_0,\sigma)$ such that for any $\epsilon<\epsilon_0$
		
		\[C_2\leq \sigma\cot u+\frac{\nu\cdot W}{\sin u}\leq C_1, \ \ \ \text{on}\ \ \p\Sigma\]
		where $r_1, r_2$ are the maximal radii of interior and exterior spheres to $\partial\Omega$, respectively. In particular, as $\epsilon$ approaches to zero, $\sigma\cos u+\nu\cdot W$ tends to zero.
		
	\end{lemma}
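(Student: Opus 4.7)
Since $u\equiv\epsilon$ on $\partial\Sigma$, the target expression factors as
\begin{equation*}
Q:=\sigma\cot u+\frac{\nu\cdot W}{\sin u}=\frac{\sigma\cos\epsilon+\nu\cdot W}{\sin\epsilon},
\end{equation*}
so the lemma amounts to a two-sided bound $\sigma\cos\epsilon+\nu\cdot W=O(\sin\epsilon)$ on $\partial\Sigma$, together with its uniform vanishing as $\epsilon\to 0$. My plan is to sandwich $\Sigma$ at each boundary point $q$ between two spherical-cap barriers of constant hyperbolic curvature $\sigma$ furnished by Lemma \ref{geometricmaximumprinciple}, and to compute the analogous quantity explicitly for each cap.

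\textbf{Barriers via Lemma \ref{geometricmaximumprinciple}.} Fix $q\in\partial\Sigma$ with image $\tilde q\in\partial\Omega_\epsilon$. By the exterior sphere condition pick an $n$-ball in $P_\epsilon\setminus\Omega_\epsilon$ of radius $r_2$ tangent to $\partial\Omega_\epsilon$ at $\tilde q$; as in the proof of Lemma \ref{c0estimate}, choose $R_+$ and a centre $a=(a',-\sigma R_+)\in\mathbb{R}^{n+1}$ so that the Euclidean ball $B_+$ of radius $R_+$ centred at $a$ satisfies $B_+\cap P_\epsilon$ equal to this tangent ball. Then $\partial B_+\cap\h$ has constant hyperbolic curvature $\sigma$ with respect to its outer normal $\nu_+=(q-a)/R_+$. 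By the interior sphere condition, pick analogously an $n$-ball in $\Omega_\epsilon$ of radius $r_1$ and a Euclidean ball $B_-$ of radius $R_-$ centred at $b=(b',\sigma R_-)$, so that $\partial B_-\cap\h$ has constant curvature $\sigma$ with respect to the inner normal $\nu_-=(b-q)/R_-$. Parts (3)--(4) of Lemma \ref{geometricmaximumprinciple} place $\Sigma$ on definite sides of the two caps near $q$. Since $\Sigma$ and each $\partial B_\pm$ share the tangent $(n-1)$-plane $T_{\tilde q}\partial\Omega_\epsilon\subset P_\epsilon$ at $q$, a standard normal comparison at a one-sided tangency gives
\begin{equation*}
\nu_-\cdot W\ \le\ \nu\cdot W\ \le\ \nu_+\cdot W\qquad\text{at }q.
\end{equation*}

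\textbf{The explicit barrier identity and the limit.} A short calculation using $q_{n+1}=q_n\tan\epsilon$ and $W=(0,\ldots,0,\sin\epsilon,-\cos\epsilon)$ cancels every $q_n$-term and produces the clean identities
\begin{equation*}
\sigma\cos\epsilon+\nu_+\cdot W=-\frac{a_n}{R_+}\sin\epsilon,\qquad \sigma\cos\epsilon+\nu_-\cdot W=\frac{b_n}{R_-}\sin\epsilon.
\end{equation*}
Dividing the sandwich by $\sin\epsilon$ gives
\begin{equation*}
\frac{b_n}{R_-}\ \le\ Q\ \le\ -\frac{a_n}{R_+}\qquad\text{at }q.
\end{equation*}
Rerunning the intersection computation of \eqref{lemma3.4.2} with $r_2$, resp.\ $r_1$, in place of $d(\Omega)/2$ bounds $R_\pm$ from below by positive constants depending only on $\Omega,r_1,r_2,\epsilon_0,\sigma$; combined with the elementary bound $|a_n|,|b_n|\le\ell(\Omega)+R_\pm$ (because the tangent $n$-balls lie on $P_\epsilon$), this yields the required $C_1(\Omega,r_2,\epsilon_0,\sigma)>0$ and $C_2(\Omega,r_1,\epsilon_0,\sigma)<0$. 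Finally, since the two identities above show $\sigma\cos\epsilon+\nu_\pm\cdot W=O(\sin\epsilon)\to 0$, the sandwich forces $\sigma\cos u+\nu\cdot W\to 0$ as $\epsilon\to 0$.

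\textbf{Main obstacle.} The whole argument hinges on the algebraic cancellation reducing $\sigma\cos\epsilon+\nu_\pm\cdot W$ from $O(1)$ to $O(\sin\epsilon)$; without it, $Q$ would blow up at rate $1/\sin\epsilon$ and the lemma would fail. The cancellation is specific to the centre-height choice $\mp\sigma R$, which is itself forced by the requirement that $\partial B_\pm\cap\h$ have constant hyperbolic curvature equal to $\sigma$. A secondary care point is making the tangential normal comparison at $q$ rigorous up to the boundary: I plan to handle it by sliding each barrier cap slightly along the interior (resp.\ exterior) normal to $\partial\Omega_\epsilon$ at $\tilde q$, applying the strong maximum principle for the fully nonlinear equation interior to $\Omega$, and passing to the limit.
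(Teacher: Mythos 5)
Your proposed cancellation $\sigma\cos\epsilon+\nu_\pm\cdot W=O(\sin\epsilon)$ from the centre-height choice $\mp\sigma R$ is a nice and correct observation, and the overall strategy (sandwich the boundary normal between two spherical barriers furnished by Lemma \ref{geometricmaximumprinciple}, then exploit the exact algebraic identity) is in the same spirit as the paper's proof. However, you have the two barriers assigned to the wrong centre heights, and this is not merely cosmetic: it breaks the argument.

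In the paper's Lemma \ref{geometricmaximumprinciple}, part (3) (the \emph{interior} case $B_1\cap P_\epsilon\subset\Omega_\epsilon$) applies to $B_1$-type balls centred at $(a',-\sigma R)$ with curvature $\sigma$ for the \emph{outer} normal, while part (4) (the \emph{exterior} case $B_2\cap\Omega_\epsilon=\emptyset$) applies to $B_2$-type balls centred at $(b',+\sigma R)$ with curvature $\sigma$ for the \emph{inner} normal. You instead take the exterior barrier $B_+$ centred at $(a',-\sigma R_+)$ (a $B_1$-type ball whose footprint lies outside $\Omega_\epsilon$) and the interior barrier $B_-$ centred at $(b',+\sigma R_-)$ (a $B_2$-type ball with footprint inside $\Omega_\epsilon$); neither configuration is covered by parts (3)--(4). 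Worse, the exterior $B_1$-type cap $\partial B_+\cap P_\epsilon^+$ actually narrows as $\eta$ (the signed distance to $P_\epsilon$) increases, so it lies entirely over the exterior tangent $n$-ball and never meets the graph over $\Omega_\epsilon$; the ``barrier'' is vacuous and carries no gradient information. By contrast, the $B_2$-type exterior ball is centred on the $P_\epsilon^+$ side, so its boundary sphere swings back over $\Omega_\epsilon$ and genuinely constrains $\Sigma$ — this is exactly why the paper uses $B_2$ there. The symptom appears immediately in your own display: since the centres satisfy $a_n,b_n>0$ (they sit over $P_\epsilon\subset\{z_n>0\}$), your sandwich $\frac{b_n}{R_-}\le Q\le -\frac{a_n}{R_+}$ asserts a positive number bounded above by a negative one. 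Redoing your identity with the paper's barriers — inner normal $\mu=(b-\tilde q)/R_2$ of $B_2$ centred at $(b',+\sigma R_2)$ for the exterior, and outer normal $\mu_1=(\tilde q-a)/R_1$ of $B_1$ centred at $(a',-\sigma R_1)$ for the interior — yields $\sigma\cos\epsilon+\mu\cdot W=\frac{b_n}{R_2}\sin\epsilon$ and $\sigma\cos\epsilon+\mu_1\cdot W=-\frac{a_n}{R_1}\sin\epsilon$, whence the correctly ordered and nontrivial sandwich $-\frac{a_n}{R_1}\le Q\le\frac{b_n}{R_2}$; the rest of your argument (bounding $R_\pm$ from below as in \eqref{lemma3.4.2}, bounding $a_n,b_n$ from above, and passing to the $\epsilon\to 0$ limit) then goes through.
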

	
	\begin{proof}
		Assume $r_2<\infty$.  Fix a boundary point $z\in\partial\Omega$. Let $B_2$ be the ball of radius $R_2$ centered at $a=(a',\sigma R)$ such that $B_2\cap P_\epsilon$ is an n-dimensional ball of radius $r_2$ externally tangent to $\partial\Omega_\epsilon$ at $\tilde{z}$. By $(4)$ of Lemma \ref{geometricmaximumprinciple}, $\Sigma\cap B_2=\emptyset$. Hence, at $\tilde{z}$,
		\[\nu\cdot (-W)\geq \frac{\sqrt{R_2^2-r_2^2}}{R_2}\]
		where we recall $\nu$ is the unit normal of $\Sigma$ pointing outward and $W$ is defined in (\ref{wdefinition}). $R_2$ satisfies
		\begin{equation}\label{equationforR_2}
			\sigma R_2-\tilde{z}_{n+1}=\sqrt{R^2_2-r^2_2}\cos\beta+r_2\sin\beta
		\end{equation}
		where $\tilde{z}_{n+1}=\tilde{z}\cdot E_{n+1}$ is the height of point $\tilde{z}$. $\beta$ is the angle between the conormal vector of $\Omega_\epsilon\subset P_\epsilon$ at $\tilde{z}$ and horizontal hyperplane through $\tilde{z}$, thus $\beta\in [-\epsilon,\epsilon].$ Hence
		\[\nu\cdot(-W)\geq \frac{\sigma R_2-\tilde{z}_{n+1}-r_2\sin\beta}{R_2\cos\beta}.\]
		It follows that on $\p\Omega,$
		\begin{align*}
			\sigma\cot u+\frac{\nu\cdot W}{\sin u}&\leq \frac{1}{\sin\epsilon}\frac{\tilde{z}_{n+1}-r_2\sin\beta}{R_2\cos\beta}\\
			&\leq \frac{\tilde{z}_n+r_2}{R_2\cos\beta}.
		\end{align*}
		Solving (\ref{equationforR_2}) gives
		\[R_2=\frac{\tilde{z}_{n+1}^2+r_2^2+2r_2\tilde{z}_{n+1}\sin\beta}{\sigma \tilde{z}_{n+1}+\sigma r_2\sin\beta+\cos\beta\sqrt{\tilde{z}^2_{n+1}+r_2^2-r^2_2\sigma^2+2r_2\tilde{z}_{n+1}\sin\beta}}\]
		Combining them yields
		\[\sigma\cot u+\frac{\nu\cdot W}{\sin u}\leq \frac{(\tilde{z}_n+r_2)(\tilde{z}_n(\sigma/\cos\epsilon_0+1)+r_2\sqrt{1-\sigma^2})}{r_2^2-2\tilde{z}_nr_2\sin^2\epsilon_0}.\]
		As $\Omega$ is a bounded domain, we get the desired upper estimate.
		
		On the other hand, we shall deal with the interior sphere to $\partial\Omega$ at point $\tilde{z}$. We obtain
		\[\nu\cdot(-W)\leq \frac{\sqrt{R_1^2-r_1^2}}{R_1}.\]
		And $R_1$ satisfies
		$$\cos\beta \sqrt{R_1^2-r_1^2}=z_n(q)+\sigma R_1$$
		where $q$ is the center of n-ball internally tangent to $\partial\Omega_\epsilon$ at $\tilde{z}.$ 
		In a similar manner, we are able to find a negative constant $C_2$ depending on $\epsilon_0,\sigma, r_1$ and $\Omega$ such that
		\[\sigma\cot u+\frac{\nu\cdot W}{\sin u}\geq C_2.\]
	\end{proof}
	
	In order to get the gradient estimates, we need the following proposition.

	\begin{proposition}\label{maximumpricipleforgradient}
		Let $\Sigma$ be a smooth strictly convex hypersurface in $\h$ satisfying $f(\kappa)=\sigma$ where $f\in \mathcal{C}_n$. If $\Sigma$ can be locally written as a geodesic graph over a domain $\Omega\subset \mathbb{R}^n_+$, that is 
		$\Sigma=\{(z,u(z)): \ z\in\Omega\}$. Then
		\[F^{ij}\nabla_{ij}(\sigma\cot u+\frac{\nu\cdot W}{\sin u})\geq 0.\]
		
	\end{proposition}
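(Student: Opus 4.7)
The plan is to rewrite the test function as $\phi = \sigma\cot u + (\nu_n - \nu_{n+1}\cot u)$, which is a linear combination of the two scalars whose linearizations under $F^{ij}\nabla_{ij}$ are computed explicitly in Theorem~\ref{gradientcalculation}. This reduction is valid because a point of $\Sigma$ has coordinates $(\ldots, y_n\cos u, y_n\sin u)$, so $\cot\theta = z_n/z_{n+1} = \cot u$; hence $W = (0,\ldots,0,\sin u,-\cos u)$ at points of $\Sigma$ and $\nu\cdot W / \sin u = \nu_n - \nu_{n+1}\cot u$.

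I would then apply $F^{ij}\nabla_{ij}$ by linearity and substitute (\ref{t3.2.1}) and (\ref{t3.2.2}), collecting terms to obtain
\begin{align*}
F^{ij}\nabla_{ij}\phi &= \sigma\bigl[(\nu_n-\nu_{n+1}\cot u)\sigma + \cot u\sum f_i\bigr]\\
&\quad + \bigl[-\sigma\cot u - (\nu_n-\nu_{n+1}\cot u)\sum f_i\kappa_i^2\bigr]\\
&= (\nu_n-\nu_{n+1}\cot u)(\sigma^2 - \sum f_i\kappa_i^2) + \sigma\cot u\,(\sum f_i - 1).
\end{align*}
The geodesic graph condition $\nu\cdot W<0$ from Definition~\ref{cylindericalgraph}, together with $\sin u>0$, gives the factor $\nu_n-\nu_{n+1}\cot u<0$. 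Inequality~(\ref{sum f_i lower bound}) gives $\sum f_i\geq 1$, so the second summand is directly non-negative.

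For the first summand to be non-negative the remaining task is to establish $\sum f_i\kappa_i^2\geq\sigma^2$, so that the factor $\sigma^2-\sum f_i\kappa_i^2\leq 0$ multiplied by the negative factor $\nu_n-\nu_{n+1}\cot u$ yields a non-negative product. I expect this inequality to be the main obstacle: direct Cauchy--Schwarz on the Euler relation $\sum f_i\kappa_i = \sigma$ only gives $(\sum f_i)(\sum f_i\kappa_i^2)\geq\sigma^2$, hence $\sum f_i\kappa_i^2\geq\sigma^2/\sum f_i$, which is weaker than what is needed since $\sum f_i\geq 1$. The natural sharpening is to apply the concavity tangent-plane inequality $f(\mu)\leq f(\kappa) + \sum f_i(\kappa)(\mu_i - \kappa_i) = \sum f_i(\kappa)\mu_i$ to the test vector $\mu = \kappa^2/\sigma$, which by $1$-homogeneity yields $f(\kappa^2) \leq \sum f_i\kappa_i^2$; then one uses the normalization $f(\mathbf{1})=1$ and concavity once more to bound $f(\kappa^2)$ below by $\sigma^2$. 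The careful interplay of concavity and $1$-homogeneity in this last step is the delicate point of the argument.
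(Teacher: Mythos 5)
Your computation up to the identity
\[
F^{ij}\nabla_{ij}\Bigl(\sigma\cot u+\tfrac{\nu\cdot W}{\sin u}\Bigr)=(\nu_n-\nu_{n+1}\cot u)\bigl(\sigma^2-\textstyle\sum f_i\kappa_i^2\bigr)+\sigma\cot u\bigl(\textstyle\sum f_i-1\bigr)
\]
is correct and coincides with the paper's first step, as does the observation that the second summand is non-negative by \eqref{sum f_i lower bound}. The gap is in the strategy for the first summand. You want to show it is non-negative on its own, which requires $\sum f_i\kappa_i^2\ge\sigma^2$. This inequality is false for general $f\in\mathcal{C}_n$. Take $n=2$ and the power mean $f=M_{-2}(\lambda)=\bigl(\tfrac12(\lambda_1^{-2}+\lambda_2^{-2})\bigr)^{-1/2}$, which satisfies \eqref{f=0 on the boundary}--\eqref{f degree 1}. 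A direct computation gives $\sum f_i\kappa_i^2\ge\sigma^2$ if and only if $\tfrac12(\kappa_1^{-1}+\kappa_2^{-1})\ge\bigl(\tfrac12(\kappa_1^{-2}+\kappa_2^{-2})\bigr)^{1/2}$, i.e.\ the arithmetic mean of $\kappa_i^{-1}$ dominates their quadratic mean, which fails whenever $\kappa_1\ne\kappa_2$. Your suggested sharpening via $f(\kappa^2)\ge\sigma^2$ also fails: already for the harmonic mean $f=H_2/H_1$ one has $f(\kappa^2)\le\sigma^2$ with equality only on the diagonal (there $\sum f_i\kappa_i^2=\sigma^2$ exactly, so the tangent-plane step $f(\kappa^2)\le\sum f_i\kappa_i^2$ absorbs the deficit by coincidence, but the lower bound you want on $f(\kappa^2)$ is simply not there).

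The bound you dismissed as ``weaker than what is needed'' --- $\sum f_i\kappa_i^2\ge\sigma^2/\sum f_i$, from Cauchy--Schwarz on $\sum f_i\kappa_i=\sigma$ --- is in fact the one the paper uses, and it suffices because the two summands must be combined rather than treated separately. Substituting it into the first summand (and using $\nu_n-\nu_{n+1}\cot u<0$) produces the common factor $(\sum f_i-1)$:
\[
F^{ij}\nabla_{ij}\phi\ge(\textstyle\sum f_i-1)\left(\sigma\cot u+\frac{\sigma^2(\nu_n-\nu_{n+1}\cot u)}{\sum f_i}\right).
\]
Then $\sum f_i\ge1$ and $|\nu\cdot W|\le1$ give $\frac{\sigma^2(\nu_n-\nu_{n+1}\cot u)}{\sum f_i}\ge-\frac{\sigma^2}{\sin u}$, so the bracket is at least $\frac{\sigma}{\sin u}(\cos u-\sigma)$, which is non-negative because the $C^0$ estimate (Lemma~\ref{c0estimate}, equation~\eqref{lemma3.4.1}) gives $\cos u\ge\sigma$. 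So the repair is not a sharper algebraic inequality on $f$; it is to carry the Cauchy--Schwarz bound through and recognize that the two summands share a $(\sum f_i-1)$ factor, after which the remaining positivity is geometric ($\cos u\ge\sigma$), not algebraic.
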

	\begin{proof}
		Notice that $(\nu\cdot W)/\sin u=\nu_n-\nu_{n+1}\cot u$. It follows from Theorem \ref{gradientcalculation} that
		\begin{align*}
			F^{ij}\nabla_{ij}(\sigma\cot u+\frac{\nu\cdot W}{\sin u})&=\sigma^2(\nu_n-\nu_{n+1}\cot u)+\sigma\cot u\sum f_i
			-\sigma \cot u\\ &-(\nu_n-\nun\cot u )\sum f_i\kappa_i^2\\
			&=(\nu_{n+1}\cot u-\nu_n)(\sum f_i\kappa_i^2-\sigma^2)+\sigma\cot u(\sum f_i-1)\\
			&\geq (\sum f_i-1)\left(\sigma\cot u+\frac{\sigma^2(\nu_n-\nu_{n+1}\cot u)}{\sum f_i}\right)\\
			&\geq \frac{\sigma}{\sin u}(\sum f_i-1)(\cos u-\sigma)
		\end{align*}
		where we have used  $(\Sigma f_i\kappa_i^2)(\Sigma f_i)\geq \sigma^2$ in the first inequality. To ensure it is nonnegative, it suffices to show that $\cos u\geq \sigma$. In fact, this is an easy comparison between $\sigma$  and the equation (\ref{lemma3.4.1}) in the proof of Lemma \ref{c0estimate}. 
	\end{proof}
	
	By above proposition and gradient estimates on the boundary, we can obtain global gradient estimates.
	\begin{theorem}\label{c1 estimates}
		Let $\Sigma$ be a smooth strictly convex hypersurface in $\h$ satisfying $f(\kappa)=\sigma$ where $f\in \mathcal{C}_n$. Then there exists a constant $C_3$ such that for sufficiently small $\epsilon$,
		\[|\nabla u|\leq C_3, \ \ \ \text{in}\ \Omega\]
		where $C$ is independent of $\epsilon.$
	\end{theorem}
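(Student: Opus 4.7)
The plan is to apply the weak maximum principle to the auxiliary function
\[G := \sigma\cot u + \frac{\nu\cdot W}{\sin u}.\]
Proposition \ref{maximumpricipleforgradient} shows $F^{ij}\nabla_{ij} G \ge 0$, i.e., $G$ is a subsolution for the linearized operator. Since $f \in \mathcal{C}_n$ gives $f_i > 0$, the matrix $\{F^{ij}\}$ is positive definite along any admissible $\Sigma$, so the operator is elliptic and the weak maximum principle yields $\max_{\overline{\Omega}} G = \max_{\partial \Omega} G$.

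Next, I would combine this with Lemma \ref{boundarygradientestimate}, which gives $G \le C_1$ on $\partial \Omega$ for $\epsilon < \epsilon_0$, with $C_1$ depending only on $\Omega, \sigma, r_2, \epsilon_0$ but not on $\epsilon$. Together with the maximum principle, this produces $G \le C_1$ throughout $\Omega$.

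To extract a bound on $|\nabla u|$, I would plug in the explicit expression for $\nu \cdot W$ along the geodesic graph. A short calculation from the formulas in Section \ref{parametrizationofgraph} shows that on $\Sigma$ the vector $W$ takes the form $(0, \ldots, 0, \sin u, -\cos u)$, leading to the clean identity $\nu \cdot W = -1/\omega$ where $\omega = \sqrt{1 + y_n^2 |\nabla u|^2}$. Thus
\[G = \sigma\cot u - \frac{1}{\omega \sin u},\]
and the bound $G \le C_1$ rearranges to $\omega^{-1} \ge \sigma \cos u - C_1 \sin u$. Using the barrier inequality $\cos u \ge \sigma$ established in the proof of Proposition \ref{maximumpricipleforgradient} (which confines $u$ to the range $[\epsilon, \arccos \sigma]$), the right-hand side will be bounded below by a strictly positive constant once $\epsilon$ is small enough, yielding an upper bound on $\omega$ and hence on $|\nabla u|$ (as $y_n$ is bounded below on $\overline{\Omega}$).

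The hard part is this final conversion: ensuring that $\sigma \cos u - C_1 \sin u$ has a positive lower bound uniform over the admissible range of $u$. The barrier restriction $\cos u \ge \sigma$ caps $\sin u$ at $\sqrt{1 - \sigma^2}$, while the asymptotic behaviour of the boundary constant as $\epsilon \to 0$ (the last assertion of Lemma \ref{boundarygradientestimate}, namely $\sigma \cos u + \nu\cdot W \to 0$) is what keeps $C_1$ in the regime where $\sigma \cos u - C_1 \sin u$ is uniformly positive. This is precisely where the hypothesis ``$\epsilon$ sufficiently small'' in the statement enters; once this quantitative positivity is secured, the desired constant $C_3$, depending only on $\Omega$ and $\sigma$, falls out directly.
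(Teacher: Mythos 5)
Your proposal reproduces the first half of the paper's argument faithfully: the maximum principle applied via Proposition \ref{maximumpricipleforgradient} together with Lemma \ref{boundarygradientestimate} indeed gives $-\nu\cdot W \geq \sigma\cos u - C_1\sin u$ throughout $\Omega$, and the computation $\nu\cdot W = -1/\omega$ is correct. The gap is in the final step. The constant $C_1$ from Lemma \ref{boundarygradientestimate} is a fixed constant depending on $\Omega$, $\sigma$, $r_2$, $\epsilon_0$ and can be large: the explicit formula in its proof is of the order $(\tilde z_n + r_2)^2/r_2^2$ and blows up as the exterior sphere radius $r_2$ shrinks. Meanwhile the barrier bound $\cos u \geq \sigma$ only caps $\sin u$ at $\sqrt{1-\sigma^2}$, so for $u$ near its upper range $\sigma\cos u - C_1\sin u$ drops to $\sigma^2 - C_1\sqrt{1-\sigma^2}$, which is negative the moment $C_1 > \sigma^2/\sqrt{1-\sigma^2}$. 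Your reading of the final sentence of Lemma \ref{boundarygradientestimate} does not rescue this: on $\partial\Sigma$ one has $u = \epsilon$, so the statement that $\sigma\cos u + \nu\cdot W \to 0$ is just the statement that $\sigma\cot\epsilon + (\nu\cdot W)/\sin\epsilon$ stays bounded by $C_1$, i.e.\ the deviation is $O(\sin\epsilon)$ with coefficient $C_1$; it does not say $C_1 \to 0$.

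The paper closes this gap with a second, independent maximum-principle step that your proposal omits entirely, and this is where the strict convexity hypothesis (never used in your argument) becomes essential. Writing $g = (\nu\cdot W)/\sin u = \nu_n - \nu_{n+1}\cot u$, the identity $g_i = -\kappa_i(\cot u)_i$ shows that at any interior maximum of $-1/g$ the first-order condition forces $\kappa_i(\cot u)_i = 0$ for all $i$; since every $\kappa_i > 0$, this gives $\nabla u = 0$ there, and at such a point $-1/g = \sin u$. Hence
\[
-\frac{\sin u}{\nu\cdot W} \;\leq\; \max\Bigl\{\max_{\partial\Omega}\Bigl(-\frac{\sin\epsilon}{\nu\cdot W}\Bigr),\ \max_{\Omega}\sin u\Bigr\},
\]
and the boundary term is $O(\sin\epsilon)$ for small $\epsilon$, so one obtains the complementary estimate $-\nu\cdot W \geq \sin u/\max_\Omega\sin u$, which is strong precisely when $\sin u$ is large — the regime where your bound fails. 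The paper then splits into the cases $\tan u \leq c$ and $\tan u \geq c$, applies your bound in the first case and this one in the second, and optimizes $c$ to get $-\nu\cdot W \geq \sigma\cos u/(C_1\max_\Omega\sin u + 1)$; Lemma \ref{c0estimate} ($\cos u$ bounded below) then yields the gradient bound. Without this second step and the case split, the argument does not close.
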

	
	\begin{proof}
		Let $g=\nu_n-\nu_{n+1}\cot u$. Then 
		\begin{equation}\label{gradientofg}
			g_i=-\kappa_i \cdot (\cot u)_i
		\end{equation}
		for each $i=1,\cdots,n.$
		Suppose $-1/g$ attains the maximum at an interior point $z_0.$ Then at $z_0$
		\[-\nabla_i \frac{1}{g}=-\frac{\kappa_i \cdot (\cot u)_i}{g^2}=0.\]
		Since $\kappa_i$ for $i=1,\cdots,n$ are positive, Then we have $\nabla\cot u=0$, and thus $\nabla u=0.$
		
		So, Lemma \ref{boundarygradientestimate} tells that
		\begin{align}\label{11111}
			-\frac{\sin u}{\nu\cdot W}&\leq \max\left\{\max_{\p \Omega} -\frac{\sin\epsilon}{\nu\cdot W}, \ \ \max_{\Omega} \sin u\right\}\nonumber\\
			&\leq 
			\max\left\{\max_{\p \Omega} \frac{\sin\epsilon}{\sigma\cos \epsilon-C_1\sin\epsilon}, \ \ \max_{\Omega} \sin u\right\}\\
			&\leq \max_\Omega\sin u \nonumber.
		\end{align}
		for sufficiently small $\epsilon.$ Thus $-\nu\cdot W\geq \sin u/\max_\Omega\sin u$. On the other hand, Proposition \ref{maximumpricipleforgradient} and Lemma \ref{boundarygradientestimate} yield that
		\[-\nu\cdot W\geq \sigma\cos u-C_1\sin u.\]
		If $\tan u\leq c$, then $-\nu\cdot W\geq \sigma\cos u-c C_1 \cos u;$ If $\tan u\geq c$, then $-\nu\cdot W\geq c\cos u/(\max_\Omega \sin u)$. Therefore choosing $c=\frac{\sigma \max_\Omega\sin u}{C_1\max_\Omega \sin u+1}$ gives to 
		\[-\nu\cdot W\geq \frac{\sigma\cos u}{C_1\max_{\Omega}\sin u+1}.\]
		This together with Lemma \ref{c0estimate} completes the proof .
	\end{proof}

	\section{Bounds for second derivatives on the boundary}\label{section4 boundary estimates}
	
	In this section, we will derive boundary estimates for the second derivatives of admissible solutions to the Dirichlet problem presented below.
	\begin{align}\label{equationinsection4}
		\begin{cases}
			G(D^2u,Du,u,y)=\sigma &\text{in} \ \Omega,\\
			u=\epsilon &\text{on}\ \p\Omega,
		\end{cases}
	\end{align}
	where $\Omega$ is a bounded domain in $\mathbb{R}^n_+$ and $G$ is defined in (\ref{definitionofG}).

	\begin{theorem}\label{boundaryestiamtesofsecondderivatives}
		Let $\Omega$ be a bounded domain in $\mathbb{R}_+^n$ with $C^3$ boundary. Suppose that $f\in \mathcal{S}_n$ . Let $u\in C^3(\bar{\Omega})$ be a admissible solution to (\ref{equationinsection4}). Then
		\[\sin u\cdot |D^2u|\leq C, \ \ \text{on}\ \p\Omega\]
		where $C$ is independent of $\epsilon.$
	\end{theorem}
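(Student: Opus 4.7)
\medskip

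\noindent\textbf{Proof proposal.} The plan is to control the quantity $\sin u \cdot |D^2 u|$ at a boundary point $z_0 \in \partial\Omega$ by splitting into three cases according to whether the two directions of differentiation are tangent or normal to $\partial\Omega$. Choose coordinates so that at $z_0$ the inner normal to $\partial\Omega$ points in the $y_n$-direction and directions $\partial_1, \ldots, \partial_{n-1}$ are tangent. Note that from the formula for $a_{ij}$ in Section~\ref{parametrizationofgraph}, the coefficient in front of $u_{kl}$ is proportional to $z_{n+1} = y_n \sin u$, which explains the natural appearance of the factor $\sin u$ in the estimate.

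For the tangential-tangential directions $\partial_\alpha, \partial_\beta$ with $\alpha,\beta < n$, one differentiates the Dirichlet condition $u \equiv \epsilon$ along $\partial\Omega$ twice. The resulting identity expresses $u_{\alpha\beta}(z_0)$ in terms of $u_n(z_0)$ and the principal curvatures of $\partial\Omega$, so the gradient bound from Theorem~\ref{c1 estimates} (which is uniform in $\epsilon$) immediately gives $|u_{\alpha\beta}| \le C$. For the tangential-normal directions I would follow the standard barrier approach of Caffarelli-Nirenberg-Spruck adapted to the linearized operator $\mathcal{L} = G^{st}\partial_{st} + G^s\partial_s + G_u$. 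Apply a first-order tangential operator $T = \partial_\alpha - \sum_\beta \frac{\rho_\beta}{\rho_n}\partial_\beta$ (where $\rho$ is a defining function of $\partial\Omega$) to the equation $G(D^2u,Du,u,y) = \sigma$ to obtain $|\mathcal{L}(Tu)| \le C(1 + \sum G^{ii})$. Construct a barrier of the form
\[
\Psi = A(u - \epsilon) + B \rho - C \sum_{\beta<n} (y_\beta - z_0^\beta)^2
\]
in a half-ball, where $A,B,C$ are chosen so that $\mathcal{L}\Psi \le -C(1 + \sum G^{ii})$ and $\Psi \pm Tu \ge 0$ on the boundary of the half-ball, using Lemma~\ref{c0estimate} to keep $u$ away from both $0$ and $\pi/2$. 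Differentiating $\Psi \pm Tu \ge 0$ in the normal direction at $z_0$ produces $\sin u \cdot |u_{\alpha n}(z_0)| \le C$.

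The main obstacle, and the reason the class $\mathcal{S}_n$ with assumption (\ref{f unn condition}) enters, is the double-normal estimate $\sin u \cdot u_{nn}(z_0) \le C$. Here I cannot differentiate the equation and run a maximum principle. Instead, I would argue by contradiction: suppose $\sin u(z_0) \cdot u_{nn}(z_0)$ is very large. Writing the matrix $A[u]$ in the orthonormal frame adapted to $\partial\Omega$, the already-controlled tangential-tangential and tangential-normal estimates show that the $(n-1)\times(n-1)$ tangential block of $A[u]$ has eigenvalues in a small ball around $\mathbf{1}$ (after normalization coming from $f(\mathbf{1})=1$ and the gradient bounds), while the entry $a_{nn}$ blows up like $\sin u \cdot u_{nn}$. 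A standard perturbation lemma for eigenvalues of symmetric matrices (see the argument leading to normal second derivative bounds in Guan-Spruck or Caffarelli-Nirenberg-Spruck) then gives that the eigenvalues of $A[u]$ are approximately $(\lambda_1',\ldots,\lambda_{n-1}',a_{nn}+o(1))$ with $(\lambda_1',\ldots,\lambda_{n-1}')$ in a small neighborhood of $(1,\ldots,1)$. Applying condition~(\ref{f unn condition}) yields
\[
\sigma = f(\kappa[\Sigma]) \ge 1 + \tfrac{\epsilon_0}{2},
\]
contradicting $\sigma \in (0,\cos\epsilon) \subset (0,1)$. This contradiction forces $\sin u \cdot u_{nn}(z_0) \le C$ for a uniform $C$, completing the estimate.

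Combining the three cases gives $\sin u \cdot |D^2 u| \le C$ on $\partial\Omega$ with $C$ independent of $\epsilon$, since every constant appearing has been tied only to $\Omega$, $\sigma$, the constants in (\ref{f unn condition}), and the uniform gradient bound of Theorem~\ref{c1 estimates}.
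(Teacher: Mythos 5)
Your decomposition into tangential--tangential, tangential--normal, and normal--normal derivatives matches the paper, and your treatment of the first and third cases is essentially the same as the paper's: the tangential--tangential bound from differentiating the Dirichlet condition, and the double-normal bound by contradiction from (\ref{f unn condition}) via the eigenvalue perturbation lemma of Caffarelli--Nirenberg--Spruck. However, the tangential--normal estimate in your proposal has a genuine gap.

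The difficulty you do not address is that the zeroth-order coefficient $G_u$ of the full linearized operator $\mathcal{L}=G^{st}\partial_{st}+G^s\partial_s+G_u$ is singular in $\epsilon$: by (\ref{estimateofGu}), $|G_u|\leq \frac{C}{\sin u}(1+\sum F^{ii})$, which is of order $1/\sin\epsilon$ in the boundary layer. Your barrier $\Psi=A(u-\epsilon)+B\rho-C\sum_{\beta<n}(y_\beta-z_0^\beta)^2$ and the term $Tu$ are merely bounded, not $O(\sin u)$, so $G_u\Psi$ and $G_u\,Tu$ are not $O(1+\sum F^{ii})$ uniformly in $\epsilon$; the estimate $|\mathcal{L}(Tu)|\leq C(1+\sum G^{ii})$ you assert cannot be made uniform. (Note also $\sum G^{ii}\sim \sin u\sum F^{ii}$, so bounding things by $\sum G^{ii}$ rather than $\sum F^{ii}$ understates the error.) The paper circumvents this in three coordinated ways that your proposal lacks. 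First, the maximum principle is applied to the \emph{partial} linearized operator $L=G^{st}\partial_{st}+G^s\partial_s$, stripping off $G_u$. Second, instead of $Tu$ alone, the test function $\phi_2$ is built by adding the precise hyperbolic-isometry corrections of Proposition~\ref{kernels}; these corrections put $\phi_2$ (up to a function of $y$ alone of size $O(\sin 2\epsilon)$) into the kernel of $\mathcal{L}$, so the troublesome $G_u\phi_2$ contribution is exactly cancelled and $|L\phi_2|\leq C(1+\sum F^{ii})$. Third, the barrier's dominant piece is $A(1-\tan\epsilon\cot u)$ rather than $A(u-\epsilon)$; Lemma~\ref{calculationofLcotu} gives $L(1-\tan\epsilon\cot u)\leq -\frac{\tan\epsilon(\omega\cos u-\sigma)}{\omega^2\sin^2 u}(1+\sum F^{ii})$, whose $\tan\epsilon/\sin^2 u$ weight is exactly what is needed to dominate $|L\phi_2|$ while still leaving, after differentiating $\phi$ at $p$, a coefficient $\frac{A}{\cos\epsilon\sin\epsilon}$ in front of $u_n$; this is where the promised factor $\sin\epsilon$ on $|u_{\alpha n}|$ comes from. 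With $L(u-\epsilon)$ as the supersolution you have no such favorable $1/\sin^2 u$ weight and no mechanism to extract the $\sin u$ factor. So as written the mixed second-derivative bound does not close with a constant independent of $\epsilon$, and you would need to incorporate the kernel corrections and the $\cot u$ barrier (or an equivalent device) to repair it.
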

	
	To prove the boundary estimates, we choose an arbitrary point $p\in\p \Omega$. Let $x_1,\cdots, x_n$ be the local coordinates such that $x(p)=0$ and the positive $x_n$ direction is the interior normal to $\p\Omega$ at $p$.  Let $\tau_i=\frac{\p}{\p x_i}$ be the orthonormal basis so $\tau_i$ for $i=1,\cdots, n-1$ are tangential to $\partial\Omega$ at $p$ and $\tau_n$ is the interior unit normal of $\p\Omega$ at $p$. Denote by $B$ the orthogonal matrix that transforms $\{E_i\}$ to $\{\tau_i\}$, that is $\tau_i=b_{ji}E_j$. We can write any point $x\in B_\delta(p)\cap \Omega$ in $y$-coordinates by $z_k=p_k+b_{ki}x_i.$
	
	We now parametrize $\Sigma$ locally in $B_\delta(p)\cap \Omega$ and calculate geometric quantities by using coordinates $\{x_i\}.$  In details, the (Euclidean) unit normal and the induced (Euclidean) Riemannian metric are 
	\[\nu=\frac{1}{\omega}(-y_nb_{1j}u_j,\cdots, -y_nb_{n-1,j}u_j,-\sin u-y_n\cos ub_{nj}u_j,\cos u-y_n\sin ub_{nj}u_j),\]
	
	\[g^E_{ij}=\delta_{ij}+y_n^2u_iu_j\]
	where $u_i=\frac{\p u}{\p x_i}$ and hereafter all the derivatives will be with respect to $\{x_i\}$. Furthermore, the Euclidean second fundamental form is 
	\[h_{ij}^E=\frac{b_{nj}u_i+b_{ni}u_j+y_n^2u_iu_jb_{nk}u_k+y_nu_{ij}}
	{\omega}.\]
	Similarly as in section \ref{parametrizationofgraph}, we have
	\begin{align*}
		a_{ij}&=\nu_{n+1}\delta_{ij}+y_{n}\sin u\gamma^{ik}h^E_{k\ell}\gamma^{\ell j}\\
		&=\frac{y_n^2\sin u}{\omega}\gamma^{ik}u_{kl}\gamma^{\ell j}+\frac{y^3_n\sin u}{\omega^3}u_iu_jb_{nk}u_k+\frac{y_n\sin u}{\omega^2}(b_{nk}\gamma^{ik}u_j+b_{n\ell}\gamma^{j\ell}u_i)\\
		&+\nu_{n+1}\delta_{ij}
	\end{align*}
	where $\gamma^{ij}=\delta_{ij}-\frac{y_n^2u_iu_i}{\omega(1+\omega)}.$
	
	We consider the linearized operator of $G$ at $u$:
	\[\mathcal{L}=G^{st}\partial_{st}+G^s\p_s+G_u,\]
	and the partial linearized operator at $u$:
	\[L=G^{st}\partial_{st}+G^s\p_s.\]
	The ambient isometries with respect to the point $p$ will produce some kernels of the operator $\mathcal{L}$. 
	
	\begin{proposition}\label{kernels}
		For any $1\leq k,\ell\leq n,$ the followings hold
		\[\mathcal{L}\left(u_k+\frac{b_{n k}}{p_n+b_{nj}x_j}\sin u\cos u\right)=0,\]
		and
		\[\mathcal{L}\left(u_kx_\ell-u_\ell x_k+\frac{b_{nk}x_\ell-b_{n\ell}x_k}{p_n+b_{nj}x_j}\sin u\cos u\right)=0.\]
	\end{proposition}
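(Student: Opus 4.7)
The plan is to combine chain-rule differentiation of the fully nonlinear equation $G(D^2u, Du, u, y_n)=\sigma$ with kernel elements produced by one-parameter families of hyperbolic isometries of $\h$. The key observation is that $G$ depends on the spatial variable only through the combination $y_n = p_n + b_{nj}x_j$, since the explicit formulas for $a_{ij}$, $\omega$, $\gamma^{ij}$, and $\nu_{n+1}$ in Section \ref{parametrizationofgraph} involve $y_n$ but none of $y_1,\ldots,y_{n-1}$.

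First I would differentiate $G=\sigma$ in the coordinate direction $x_k$, using $\partial y_n/\partial x_k = b_{nk}$, to obtain
\[
0 = G^{st}u_{stk} + G^s u_{sk} + G_u u_k + G_{y_n} b_{nk} = \mathcal{L}(u_k) + G_{y_n}b_{nk}.
\]
Hence $\mathcal{L}(u_k) = -G_{y_n}\,b_{nk}$, and the first identity reduces to the single identity
\[
\mathcal{L}\!\left(\frac{\sin u\cos u}{y_n}\right) = G_{y_n}. \qquad (\star)
\]
To establish $(\star)$, I would use the horizontal translation $z\mapsto z+sE_n$, which is a hyperbolic isometry. Applying it to $\Sigma$ and reparametrizing the translated hypersurface as a geodesic graph via the implicit relations $\tilde y_n\cos\tilde u = y_n\cos u + s$, $\tilde y_n\sin\tilde u = y_n\sin u$, an implicit-function computation yields the kernel element $\mathcal{L}(\sin u/y_n + \cos u\cdot u_{y_n})=0$. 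Combined with the scaling identity $y_n G_{y_n} = 2G^{st}u_{st} + G^s u_s$ (coming from the dilation $z\mapsto\lambda z$) and the relation $\mathcal{L}(u_{y_n}) = -G_{y_n}$ obtained from the $x_k$-differentiation via the change of basis $u_{y_n} = \sum_k b_{nk} u_k$, the identity $(\star)$ follows by algebraic manipulation of these three relations.

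For the second identity I would exploit the clean algebraic relation
\[
u_k x_\ell - u_\ell x_k + \frac{b_{nk}x_\ell - b_{n\ell}x_k}{y_n}\sin u\cos u = x_\ell\,\phi_k - x_k\,\phi_\ell,
\]
where $\phi_k := u_k + (b_{nk}/y_n)\sin u\cos u$ is the kernel element from the first identity. Using the Leibniz-type formula $\mathcal{L}(x_i\phi) = x_i\mathcal{L}(\phi) + 2G^{it}\phi_t + G^i\phi$, and $\mathcal{L}(\phi_k)=\mathcal{L}(\phi_\ell)=0$, the second identity reduces to the cancellation $2G^{\ell t}(\phi_k)_t + G^\ell\phi_k = 2G^{kt}(\phi_\ell)_t + G^k\phi_\ell$, which can be verified by differentiating $G=\sigma$ in $x_k$ and $x_\ell$ and using the symmetry of the matrix $\{G^{st}\}$. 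Alternatively, the second identity arises directly from the one-parameter family of rotations in the $(\tau_k,\tau_\ell)$-plane around $p$ (a hyperbolic isometry since the rotation lives in $SO(n)$ acting on $\mathbb{R}^n$ and fixes $z_{n+1}$), whose infinitesimal generator $\tau_\ell\cdot(z-p)\,\tau_k - \tau_k\cdot(z-p)\,\tau_\ell$ produces exactly the antisymmetric kernel element $\psi_{k\ell}$.

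The main obstacle is the verification of the identity $(\star)$: although conceptually motivated by the ambient isometry group, establishing the precise form of the correction $(\sin u\cos u)/y_n$ requires carefully tracking how the explicit $y_n$-dependence of $a_{ij}$ (through $y_n^2\sin u/\omega$, $\nu_{n+1}$, $\gamma^{ij}$, etc.) interacts with the differentiation of the trigonometric factors $\sin u$ and $\cos u$. The factor $\sin u\cos u$ emerges naturally because, geometrically, it encodes the "tilt correction" between the $y_n$-coordinate vector and the geodesic normal to $P_\epsilon$ when passing from $z$-coordinates to the geodesic-graph parametrization.
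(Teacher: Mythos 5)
The student's proposal identifies the right underlying mechanism (hyperbolic Killing fields produce elements of $\ker\mathcal{L}$) but follows a genuinely different route from the paper, and the route has a load-bearing gap. The paper produces each kernel element from a \emph{single} one-parameter family of isometries: translation by $t\tau_k$ for the first identity, and rotation about the vertical geodesic at $p$ for the second, computing the resulting variation $\dot v$ in one shot. The student instead decomposes: differentiate $G=\sigma$ in $x_k$ to get $\mathcal{L}(u_k)=-G_{y_n}b_{nk}$ (correct, and a nice observation exploiting that $G$ depends on position only through $y_n$), reducing the first identity to the auxiliary claim $(\star)\colon \mathcal{L}\big(\tfrac{\sin u\cos u}{y_n}\big)=G_{y_n}$, and then tries to assemble $(\star)$ from three separate kernel relations coming from the $E_n$-translation, the dilation, and the change of basis. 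Each of those three relations is in fact correct, but the crucial step --- the claimed ``algebraic manipulation'' that combines them into $(\star)$ --- is not carried out and does not close in any transparent way.

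Concretely, the three relations read $\mathcal{L}\big(\tfrac{\sin u}{y_n}+\cos u\,u_{y_n}\big)=0$, $y_nG_{y_n}=2G^{st}u_{st}+G^su_s$, and $\mathcal{L}(u_{y_n})=-G_{y_n}$, whereas the target involves $\mathcal{L}\big(\tfrac{\sin u\cos u}{y_n}\big)$. Passing from the first relation to the target requires, in effect, inserting and extracting multiplicative factors of $\cos u$ inside $\mathcal{L}$. But $\mathcal{L}$ is second order, not a derivation; the relevant product rule is
\[
\mathcal{L}(fg)=f\,\mathcal{L}(g)+g\,\mathcal{L}(f)-G_u\,fg+2G^{st}f_sg_t,
\]
and the resulting cross terms $2G^{st}f_sg_t$ (with $f,g$ ranging over $\cos u$, $\sin u/y_n$, $u_{y_n}$) plus the $G_u$-corrections appear on both sides of the would-be manipulation and do not visibly cancel. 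Nothing in the dilation relation helps kill them, since it only relates $G_{y_n}$ to $2G^{st}u_{st}+G^su_s$, and those pure second-order quantities never arise in the Leibniz expansion of $\mathcal{L}\big(\cos u\cdot(\tfrac{\sin u}{y_n})\big)$ in the right combination. So $(\star)$ is asserted, not proved. The same issue recurs in the proposed proof of the second identity: the clean decomposition $x_\ell\phi_k-x_k\phi_\ell$ and the commutator formula $\mathcal{L}(x_i\phi)=x_i\mathcal{L}(\phi)+2G^{it}\phi_t+G^i\phi$ are fine, but the required cancellation $2G^{\ell t}(\phi_k)_t+G^\ell\phi_k=2G^{kt}(\phi_\ell)_t+G^k\phi_\ell$ is again claimed without verification, and it is not a consequence of the symmetry of $\{G^{st}\}$ alone.

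What the paper does instead, and what you should do: rather than break the $\tau_k$-translation into $x_k$-differentiation plus a separate $E_n$-translation plus a dilation, apply the single hyperbolic isometry $z\mapsto z+t\tau_k$ to the whole graph, reparametrize the translated surface as a geodesic graph over the translated domain, and compute $\dot v=\partial_t v\big|_{t=0}$ directly. That one computation gives a kernel element in one stroke with no Leibniz corrections to track. For the second identity, use the rotation $R(\theta)$ in the $(\tau_k,\tau_\ell)$-plane about the vertical geodesic through $p$, which is a Killing field fixing $z_{n+1}$; this gives the antisymmetric kernel element directly. The student's decomposition is not wrong in principle, but completing it requires explicitly carrying out the product-rule bookkeeping, and in the current write-up that bookkeeping is precisely what is missing.
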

	\begin{proof}
		Consider translation along $x_k$-direction by distance $t$. Let $y=(\cdots,x_k+t,\cdots)$ and 
		\[\cot v(y)=\cot u(x)\left(1+t\frac{b_{n k}}{p_n+b_{n j}x_j}\right).\]
		Since translations are isometries in hyperbolic space, we have
		\[G(D^2v(y), Dv(y),v(y), y)=\sigma.\]
		Differentiating it with respect to $t$ and evaluating at $t=0$ yield 
		\[G^{st}\dot{v}_{st}+G^s\dot{v}_s+G_u\dot{v}=0\]
		By standard computation $\dot{v}$ is
		\[\dot{v}=-u_k-\frac{b_{n k}}{p_n+b_{nj}x_j}\sin u\cos u\]
		where $p_n=p\cdot E_n$. 
		
		On the other hand, consider rotations with respect to the vertical geodesic through the point $p$. Let $R(\theta)$ be the rotation matrix that preserves the subspace $V=\{a\tau_k+b\tau_\ell\}.$ Now let $y=R(\theta)x$, that is $$\Bar{x}=(\cdots,\cos\theta x_k-\sin\theta x_\ell,\cdots, \sin\theta x_k+\cos\theta x_\ell,\cdots).$$
		Let 
		\[\cot v(\Bar{x})=\cot u(x)\left(1+\frac{a_{nk}x_k+a_{n\ell}x_\ell}{p_n+a_{nj}x_j}(\cos\theta-1)-\frac{a_{nk}x_\ell+a_{n\ell}x_k}{p_n+a_{nj}x_j}\sin\theta\right)\]
		Again rotations are hyperbolic isometries, so\[G(D^2v(\Bar{x}), Dv(\Bar{x}),v(\Bar{x}), \Bar{x})=\sigma.\]
		Differentiating it with respect to $t$ and evaluating at $t=0$ yield 
		\[G^{st}\dot{v}_{st}+G^s\dot{v}_s+G_u\dot{v}=0\]
		By standard computation $\dot{v}$ is
		\[\dot{v}=u_ku_\ell-u_\ell x_k+\frac{a_{nk}x_\ell-a_{n\ell}x_k}{p_n+a_{nj}x_j}\sin u\cos u.\]
		This completes the proof of the Proposition.
		
	\end{proof}
	
	\begin{remark}
		The rotations we consider in above proposition preserve the infinity of $\h$. There also exist rotations at $p$ altering the infinity which induce different kernels of $\mathcal{L}$. However, above two types are enough for our applications.
	\end{remark}
	
	For later use, we need to understand the coefficients of $\mathcal{L}$. First, we see that
	\begin{align}\label{calculationofGu}
		G_u&=F^{ij}\frac{\p a_{ij}}{\p u}\nonumber\\
		&=F^{ij}(\nu_n\delta_{ij}+y_n\cos u\gamma^{ik}h^E_{k\ell}\gamma^{\ell j})\nonumber\\
		&=(\nu_n-\nu_{n+1}\cos u)\sum F^{ii}+\sigma\cot u.
	\end{align}
	Thus 
	\begin{equation}\label{estimateofGu}
		|G_u|\leq \frac{C}{\sin u}(1+\sum F^{ii}).   
	\end{equation}
	
	For $G^{st}$, we get $$G^{st}=F^{ij}\frac{y_n^2\sin u}{\omega}\gamma^{is}\gamma^{tj}.$$ Consequently,
	\begin{equation}\label{calculationofGst}
		G^{st}u_{st}=\sigma-\nu_{n+1}\sum F^{ii}-\frac{y^3_n\sin u}{\omega^3}F^{ij}u_iu_jb_{nk}u_k-\frac{2y_n\sin u}{\omega^2}F^{ij}b_{nk}\gamma^{ik}u_j.
	\end{equation}
	
	Furthermore, similar to the Lemma 2.3 in \cite{guancpam2004} we can obtain
	\begin{proposition}
		It holds:
		\[\begin{aligned} G^s&= -\frac{y_n^2}{w^2} u_s \sigma-\frac{\sin u y_n \delta_{n s}}{w} \sum F^{i i}+\frac{y_n^5 \sin u}{w^5} b_{nk} u_k u_sF^{i j} u_i u_j +\frac{2 y_n^3\sin u }{w^4 }u_s F^{i j} b_{n k} \gamma^{i k} u_j\\
			& -2y_n^2 F^{i j}  a_{i k} \frac{w u_k \gamma^{s j}+u_j \gamma^{k s}}{(1+w) w}+2\nu_{n+1}y_n^2 F^{ij}\frac{w u_i \gamma^{s j}+u_j \gamma^{i s}}{(1+w) w} \\
			&+2\frac{y_n^5\sin u}{\omega^3}F^{ij}u_i u_k b_{nl} u_l \frac{w u_k \gamma^{s j}+u_j \gamma^{k s}}{(1+w) w}
			\\ & +2 \frac{y_n^3\sin u}{w^2} F^{i j}  b_{n \ell}  \gamma^{i \ell} u_k \frac{w u_k \gamma^{s j}+u_j \gamma^{k s}}{w(1+w)}+2 \frac{y_n^3 \sin u}{w^2} 
			F^{i j} b_{n \ell} \gamma^{\ell k} u_i \frac{w u_k \gamma^{s i}+u_j \gamma^{k s}}{w(1+w)}\\
			& -\frac{3 y_n^5 \sin u}{w^5} b_{nk} u_k u_s F^{i j} u_{i} u_j+\frac{y^2_n\sin u}{w^3}\left[F^{i j} u_i u_j y_n \delta_{n s}+2 y_n b_{nk} u_k F^{i j} \delta_{i s} u_j\right] \\ & -\frac{4 y_n^3\sin u}{w^4} u_s F^{i j} b_{n k} \gamma^{i k} u_j+2 \frac{y_{n}\sin u}{w^2} F^{i j} b_{n k} \frac{\partial\left(\gamma^{i k} u_j\right)}{\p u_s}.\end{aligned}\]
		
	\end{proposition}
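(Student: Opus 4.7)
Since $G(D^2u, Du, u, y) = F(A[u])$ with $A[u] = [a_{ij}]$ and the Hessian entries $u_{k\ell}$ are treated as independent of $u_s$, the chain rule gives
\[G^s = F^{ij}\,\frac{\partial a_{ij}}{\partial u_s}.\]
The expression for $a_{ij}$ recorded at the beginning of this section splits into four pieces,
\[T_1 = \nu_{n+1}\delta_{ij},\qquad T_2 = \tfrac{y_n^2\sin u}{w}\gamma^{ik}u_{k\ell}\gamma^{\ell j},\]
\[T_3 = \tfrac{y_n^3\sin u}{w^3}u_iu_j b_{nk}u_k,\qquad T_4 = \tfrac{y_n\sin u}{w^2}\bigl(b_{nk}\gamma^{ik}u_j + b_{n\ell}\gamma^{\ell j}u_i\bigr),\]
so the proof reduces to computing $F^{ij}\partial T_\alpha/\partial u_s$ for $\alpha=1,\ldots,4$ and reassembling. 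The elementary ingredients are $\partial_{u_s}w = y_n^2 u_s/w$, $\partial_{u_s}\nu_{n+1} = -(y_n\sin u/w)\delta_{ns} - \nu_{n+1}y_n^2 u_s/w^2$, and the derivative of $\gamma^{ij} = \delta_{ij} - y_n^2 u_iu_j/(w(1+w))$, whose explicit form is what produces every appearance of $(wu_k\gamma^{sj} + u_j\gamma^{ks})/(w(1+w))$ in the statement.

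\textbf{The decisive combination: $T_1+T_2$.} Differentiating $T_1$ immediately gives
\[F^{ij}\partial_s T_1 = -\frac{y_n\sin u\,\delta_{ns}}{w}\sum F^{ii} - \frac{\nu_{n+1}y_n^2 u_s}{w^2}\sum F^{ii}.\]
For $T_2$, since $u_{k\ell}$ is held fixed, only the prefactor $1/w$ and the two copies of $\gamma$ are touched. The $w$-piece produces $-(y_n^2 u_s/w^2)\cdot F^{ij}(y_n^2\sin u/w)\gamma^{ik}u_{k\ell}\gamma^{\ell j} = -(y_n^2 u_s/w^2)\,G^{st}u_{st}$, and by (\ref{calculationofGst}), $G^{st}u_{st}$ equals $\sigma - \nu_{n+1}\sum F^{ii} - (y_n^3\sin u/w^3)F^{ij}u_iu_j b_{nk}u_k - (2y_n\sin u/w^2)F^{ij}b_{nk}\gamma^{ik}u_j$. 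Distributing this and adding the $T_1$ contribution makes the $\nu_{n+1}\sum F^{ii}$ terms cancel exactly, producing the four terms $-y_n^2u_s\sigma/w^2$, $-y_n\sin u\,\delta_{ns}\sum F^{ii}/w$, $(y_n^5\sin u/w^5)b_{nk}u_ku_s F^{ij}u_iu_j$, and $(2y_n^3\sin u/w^4)u_s F^{ij}b_{nk}\gamma^{ik}u_j$ on the first line of the statement verbatim.

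\textbf{The remaining pieces.} The two $\gamma$-derivative contributions from $T_2$ are paired terms $F^{ij}(y_n^2\sin u/w)\gamma^{ik}u_{k\ell}\,\partial_{u_s}\gamma^{\ell j}$ and its $i\leftrightarrow j$ partner. Eliminating $u_{k\ell}$ by means of $h^E_{k\ell} = (b_{n\ell}u_k + b_{nk}u_\ell + y_n^2 u_ku_\ell b_{nm}u_m + y_n u_{k\ell})/w$ together with $y_n\sin u\,\gamma^{ik}h^E_{k\ell}\gamma^{\ell j} = a_{ij} - \nu_{n+1}\delta_{ij}$ rewrites these as the four lines with denominators $(1+w)w$ in the statement, namely the $-2y_n^2 F^{ij}a_{ik}[\cdots]$, $+2\nu_{n+1}y_n^2 F^{ij}[\cdots]$, $+2(y_n^5\sin u/w^3) F^{ij}u_iu_k b_{n\ell}u_\ell[\cdots]$, and the two further $(2y_n^3\sin u/w^2) F^{ij}b_{n\ell}[\cdots]$ terms. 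The term $T_3$, being a rational function of $w$ times a product of $u$'s, is handled by straightforward product-rule expansion: differentiating $1/w^3$ yields the $-3y_n^5\sin u/w^5$ line, while the $u_i$, $u_j$, and $b_{nk}u_k$ factors together yield the bracketed expression $[F^{ij}u_iu_j y_n\delta_{ns} + 2y_n b_{nk}u_k F^{ij}\delta_{is}u_j](y_n^2\sin u/w^3)$. Finally, $T_4$ contributes the $-4y_n^3\sin u/w^4$ line (from the $1/w^2$ and the free $u_j$) and the unexpanded term $(2y_n\sin u/w^2) F^{ij}b_{nk}\partial(\gamma^{ik}u_j)/\partial u_s$, in which the remaining $\gamma$-derivative is bundled with the factor $u_j$ as stated.

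\textbf{Main obstacle.} The proof is pure calculus, and its only real difficulty is the bookkeeping: roughly a dozen product-rule fragments appear, several of which look identical up to $i\leftrightarrow j$, making the $F^{ij}$-symmetrizations (which account for every factor of $2$ in the statement) easy to miscount. The clean appearance of the first four terms rests entirely on the cancellation between the $\nu_{n+1}\sum F^{ii}$ pieces arising from $T_1$ and from the $w$-derivative of $T_2$; without invoking (\ref{calculationofGst}) to eliminate $F^{ij}\gamma^{ik}u_{k\ell}\gamma^{\ell j}$, one would see neither this cancellation nor the clean $\sigma$ term. The vertical-graph analogue is Lemma~2.3 of \cite{guancpam2004} and provides a reliable template to transplant; the modifications needed here are only the extra $\sin u$ factors and the occurrence of $b_{nk}$ in place of $\delta_{nk}$.
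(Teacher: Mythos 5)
Your proposal is correct and takes essentially the same route as the paper, which offers no written derivation beyond pointing to Lemma 2.3 of \cite{guancpam2004}: your direct chain-rule computation of $F^{ij}\,\partial a_{ij}/\partial u_s$ term by term, with the key cancellation of the $\nu_{n+1}\sum F^{ii}$ contributions via (\ref{calculationofGst}), is exactly the transplanted calculation that citation stands for. One caveat you inherit from the statement as printed: differentiating $b_{nj}u_j$ (inside $\nu_{n+1}$ and inside the factor $b_{nk}u_k$) with respect to $u_s$ yields $b_{ns}$, not $\delta_{ns}$, so the occurrences of $\delta_{ns}$ in your ingredient list and in the proposition should read $b_{ns}$.
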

	
	The only term involving $a_{ij}$ is the first term in second line. Since $F^{ij}$ and $\{a_{ij}\}$ are both positive definite and can be diagonalized simultaneously by an orthogonal matrix, we have that
	\[|F^{ij}a_{ik}|\leq \sum f_i\kappa_i=\sigma.\]
	Moreover, the last term is
	\begin{flalign}\label{lasttermderivative}
		\begin{aligned}
			\frac{\partial\left(\gamma^{i k} u_j\right)}{\partial u_s}&=\delta_{i k} \delta_{j s} -\frac{y_n^2\left(\delta_{i s} u_k u_j+u_i \delta_{k s} u_j+u_i u_k \delta_{js}\right)}{w(1+w)} \\ & +\frac{y_n^2 u_i u_k u_j}{w^2(1+w)^2}\left(2 w \frac{\partial w}{\partial u_s}+\frac{\partial w}{\partial u_s}\right).
		\end{aligned} 
	\end{flalign}
	Thus it follows from the boundedness of $u,y_n,\omega, b_{ij}$ and $|\nabla u|$ that
	\begin{equation}\label{boundforGs}
		|G^s|\leq C(1+\sum F^{ii})
	\end{equation}
	where $C$ is a uniform constant.
	
	We now want to calculate $Lu$. First from (\ref{lasttermderivative}), we obtain
	\[\frac{\partial\left(\gamma^{i k} u_j\right)}{\partial u_s}u_s=\delta_{ik}u_j-\frac{\omega^2+\omega+1}{\omega^3(1+\omega)}y^2_nu_iu_ju_k.\]
	Therefore by tedious but standard computation, 
	\begin{align*}
		G^su_s&=\frac{1-\omega^2}{\omega^2}\sigma-\frac{b_{nj} u_j y_n\sin u}{\omega}\sum F^{ii}-\frac{2y^2_n}{\omega^2}F^{ij}a_{ik}u_ju_k+\frac{2y_n\sin u}{\omega^2}F^{ij}b_{ni}u_j\\
		&+F^{ij}u_iu_j\left(\frac{-2y_n^3\sin ub_{n\ell}u_\ell}{\omega^3(1+\omega)}+\frac{2\nu_{n+1}y_n^2}{\omega^2}+\frac{3 b_{nk} u_k y_n^3\sin u }{\omega^3}\right).
	\end{align*}
	This together with (\ref{calculationofGst}) yields
	\begin{equation}\label{calculationofLu}
		Lu=\frac{\sigma}{\omega^2}-\frac{2y^2_n}{\omega^2}F^{ij}a_{ik}u_ju_k-(\nu_{n+1}+\frac{b_{nj} u_j y_n\sin u}{\omega})\sum F^{ii}+\frac{2\cos u y_n^2}{\omega^3}F^{ij}u_iu_j.
	\end{equation}

	We will employ the following lemma in establishing the boundary estimates of second derivatives.
	\begin{lemma}\label{calculationofLcotu}
		Suppose that $f\in \mathcal{C}_n$ . Then
		\[L(1-\tan\epsilon\cot u)\leq -\frac{\tan\epsilon(\omega\cos u-\sigma)}{\omega^2\sin^2 u}(1+\sum F^{ii}).\]
		
	\end{lemma}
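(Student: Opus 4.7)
The plan is to compute $L(\cot u)$ directly; since $L=G^{st}\partial_{st}+G^s\partial_s$ carries no zeroth-order term, $L(1)=0$ and hence $L(1-\tan\epsilon\cot u)=-\tan\epsilon\,L(\cot u)$, so the whole task is to pin down $L(\cot u)$. First I will apply the chain rule to get $(\cot u)_s=-u_s/\sin^2 u$ and $(\cot u)_{st}=-u_{st}/\sin^2 u+2\cos u\,u_s u_t/\sin^3 u$. Substituting into $L$ and regrouping yields
\[
L(\cot u)=-\frac{Lu}{\sin^2 u}+\frac{2\cos u}{\sin^3 u}\,G^{st}u_s u_t.
\]

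Next I will simplify the right-hand side. Using the expression $G^{st}=\frac{y_n^2\sin u}{\omega}F^{ij}\gamma^{is}\gamma^{tj}$ together with the basic contraction $\gamma^{sj}u_s=u_j/\omega$, I obtain $G^{st}u_s u_t=\frac{y_n^2\sin u}{\omega^3}F^{ij}u_iu_j$. Then I plug in the formula (\ref{calculationofLu}) for $Lu$ and apply the algebraic identity
\[
\nu_{n+1}+\frac{b_{nj}u_j\,y_n\sin u}{\omega}=\frac{\cos u}{\omega},
\]
which is an immediate consequence of $\omega\nu_{n+1}=\cos u-y_n\sin u\,b_{nj}u_j$ read off from the explicit formula for $\nu$ in this section. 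The two $F^{ij}u_iu_j$ contributions -- one from $Lu/\sin^2 u$ and one from $\frac{2\cos u}{\sin^3 u}G^{st}u_su_t$ -- will cancel exactly, leaving the clean identity
\[
L(\cot u)=\frac{1}{\omega^2\sin^2 u}\Bigl[\omega\cos u\,{\textstyle\sum}F^{ii}+2y_n^2F^{ij}a_{ik}u_ju_k-\sigma\Bigr].
\]

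Multiplying by $-\tan\epsilon$ and comparing with the stated right-hand side, the lemma reduces to the pointwise estimate
\[
\omega\cos u\;\le\;\sigma\,{\textstyle\sum}F^{ii}+2y_n^2F^{ij}a_{ik}u_ju_k.
\]
Proving this auxiliary inequality will be the main obstacle. The plan is to work in a basis that simultaneously diagonalizes $F^{ij}$ and $a_{ij}$ (such a basis exists because $f$ depends only on the eigenvalues of $A$); in it the middle term becomes $2y_n^2\sum_\alpha f_\alpha\kappa_\alpha v_\alpha^2\ge 0$, where the $v_\alpha$ are the components of $\nabla u$, constrained by $y_n^2\sum_\alpha v_\alpha^2=\omega^2-1$. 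Combining Euler's homogeneity identity $\sigma=F^{ij}a_{ij}=\sum_\alpha f_\alpha\kappa_\alpha$, the structural bound $\sum f_\alpha\ge 1$ from (\ref{sum f_i lower bound}), and a Cauchy--Schwarz argument with the positive weights $f_\alpha\kappa_\alpha$ should close the estimate; the bound will be saturated exactly at umbilic points where $\omega=1$, $\cos u=\sigma$, and $\sum f_\alpha=1$, a configuration which is consistent with the characterization of interior critical points of $u$ coming from Proposition~\ref{maximumpricipleforgradient} and Lemma~\ref{c0estimate}.
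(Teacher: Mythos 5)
Your computation of $L(\cot u)$ is correct, and it matches the paper's (after fixing a sign typo in the paper's own display: the $+\sigma/(\omega^2\sin^2 u)$ in their second line should be $-\sigma/(\omega^2\sin^2 u)$). You arrive correctly at
\[
L(\cot u)=\frac{1}{\omega^2\sin^2 u}\Bigl[\omega\cos u\,\textstyle\sum F^{ii}+2y_n^2F^{ij}a_{ik}u_ju_k-\sigma\Bigr],
\]
and your reduction of the stated inequality to the pointwise claim $\omega\cos u\le\sigma\sum F^{ii}+2y_n^2F^{ij}a_{ik}u_ju_k$ is also algebraically correct. The difficulty is in your final paragraph, which is only a plan (``should close the estimate''), and in fact cannot be made to work: at an interior critical point of $u$ one has $\nabla u=0$, $\omega=1$, the gradient term drops out, and the target becomes $\cos u\le\sigma\sum F^{ii}$. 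For $f=H_1$ (the mean curvature) one has $\sum F^{ii}\equiv 1$, yet at an interior maximum of $u$ the Hessian is $\le 0$, which forces $\kappa_i\le\cos u$ and hence $\sigma\le\cos u$ with \emph{strict} inequality whenever $D^2u\ne 0$. So the reduced inequality fails in general, and no Cauchy--Schwarz with weights $f_\alpha\kappa_\alpha$ can rescue it.

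The lemma's statement appears to carry a missing factor of $\tfrac12$; the paper's own proof establishes
\[
L(1-\tan\epsilon\cot u)\le -\frac{\tan\epsilon(\omega\cos u-\sigma)}{2\,\omega^2\sin^2 u}\bigl(1+\textstyle\sum F^{ii}\bigr),
\]
which is all that the subsequent barrier argument in Theorem~\ref{boundaryestiamtesofsecondderivatives} needs (any fixed positive multiple suffices there, since $A$ is chosen large). The ingredient you are missing is the elementary inequality $2(a-b)\ge(1-b)(1+a)$ for $a\ge1$, $b\ge0$ (equivalent to $(a-1)(1+b)\ge0$), applied with $a=\sum F^{ii}\ge1$ and $b=\sigma/(\omega\cos u)\in(0,1]$ after discarding the nonnegative term $2y_n^2 F^{ij}a_{ik}u_ju_k$. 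Your approach of keeping and exploiting that term, rather than dropping it, is what leads you toward the unprovable strong form.
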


	\begin{proof}
		It follows from (\ref{calculationofLu}) that
		\begin{align*}
			L\cot u&= -\frac{1}{\sin ^2u}Lu+\frac{2y^2_n\cot u}{\omega^3\sin u }F^{ij}u_iu_j\\
			&=\frac{\sigma}{\omega^2\sin^2u}+\frac{2y^2_n}{\omega^2\sin^2u}F^{ij}a_{ik}u_ju_k+\frac{1}{\sin^2u}(\nu_{n+1}+\frac{b_{nj} u_j y_n\sin u}{\omega})\sum F^{ii}\\
			&\geq \frac{\sigma}{\omega^2\sin^2u}+\frac{\cos u}{\omega\sin^2u}\sum F^{ii}
		\end{align*}
		where we have used the expression of $\nun$ and the fact that 
		\[F^{ij}a_{ik}\zeta_k\zeta_j\geq 0, \ \ \forall \ \zeta\in \mathbb{R}^n.\]
		
		Applying the fundamental inequality $2(a-b)\geq (1-b)(1+a)$ for any $a\geq 1$ and $b\geq 0$, we obtain
		\[L\cot u\geq \frac{\cos u}{2\omega \sin^2 u}(1-\frac{\sigma}{\omega \cos u})(1+\sum F^{ii})\geq 0.\]
		Therefore, the lemma follows from above inequality.
		
	\end{proof}

	\begin{proof}[Proof of Theorem \ref{boundaryestiamtesofsecondderivatives}]
		As mentioned in the beginning of this section, we derive the estimates of second derivatives at the point $p\in \p\Omega.$
		Since the compact boundary is assumed to be $C^3$, there exists a uniform constant $\delta>0$ such that $\partial\Omega\cap B_\delta(p)$ can be represented as a graph:
		\[x_n=\rho(x')=\frac{1}{2}\sum_{\alpha,\beta<n}B_{\alpha\beta}x_\alpha x _\beta+O(|x'|^3),\ \ \ x'=(x_1,\cdots,x_{n-1})\]
		where $B_{\alpha\beta}$ are the principal curvatures of $\partial\Omega$ at $p$.
		The boundedness of $u_{\alpha\beta}(p)$ follows in a standard way. In fact, $u(x',\rho(x'))=\epsilon$ along $\p\Omega$. Then 
		for $\alpha,\beta<n$ we have $u_{\alpha\beta}(p)=-u_n(p)B_{\alpha\beta}$. Thus
		\[|u_{\alpha\beta}|(p)\leq C\]
		where $C$ depends on the geometry of $\p\Omega$ and the upper bound of $|\nabla u|.$
		
		For estimates of second derivatives of other mixed indices, as in \cite{caffarelli1984},\cite{guan2009jga},\cite{guanjems} we consider for fixed $\alpha<n$ the operator
		\begin{equation*}
			T=\p_\alpha+\sum_{\beta<n}B_{\alpha\beta}(x_\beta\p_n-x_n\p_\beta).
		\end{equation*}
		There exists a constant $C$ such that
		\begin{equation}\label{interiorboundforTu}
			|Tu|\leq  C,\ \ \ \ \text{in}\ B_\delta(p)\cap \Omega,
		\end{equation}
		and since $u_\alpha+\sum_{\beta<n}B_{\alpha\beta}x_\beta u_n=u_n(-\rho_\alpha+\sum_{\beta<n}B_{\alpha\beta}x_\beta)$ on $\p \Omega$ around $p$, then
		\begin{equation}\label{boundaryboundforTu}
			|Tu|\leq C|x|^2, \ \ \ \ \text{on}\ \p\Omega\cap B_\delta(p).
		\end{equation}
		
		Define a function
		\[\phi_1=A(1-\tan\epsilon \cot u)+B|x|^2,\]
		and 
		\[\phi_2=Tu+\frac{b_{n\alpha}}{2y_n}(\sin 2u-\sin 2\epsilon)-\sum_{\beta<n}B_{\alpha\beta}\frac{b_{n\beta}x_n-b_{nn}x_\beta}{2y_n}(\sin 2u-\sin 2\epsilon).\]
		By (\ref{interiorboundforTu}),(\ref{boundaryboundforTu}), $|\phi_2|\leq C|x|^2$ on $\p\Omega\cap B_\delta(p)$ and  $|\phi_2|\leq C$ in $B_\delta(p)\cap \Omega$ with a strictly larger constant $C$ than the original one.
		Let
		\[\phi=\phi_1\pm \phi_2.\]
		Therefore, we can choose $B=C/\delta$ such that $\phi\geq 0$ on $\partial(\Omega\cap B_\delta(p)).$ On the other hand, Proposition \ref{kernels} and (\ref{boundforGs}) together with simple computation imply 
		\[|L\phi_2|\leq C_1(1+\sum F^{ii})\]
		for a uniform constant $C_1>0.$
		Hence, it follows from (\ref{boundforGs}) and Lemma \ref{calculationofLcotu} $\Omega\cap B_\delta(p),$
		\[L\phi\leq \left(-\frac{\sin\epsilon}{\omega^2}(\omega\cos u-\sigma)\frac{A}{\sin^2 u}+C\cdot B+C_1\right)(1+\sum F^{ii})\]
		which can be made to be negative by choosing large enough $A$.
		
		Thus by maximum principle, $\phi\geq 0$ in $\Omega\cap B_\delta(p)$. Since $\phi(p)=0$, so $p$ is a minimum point thus $\phi_n(p)\geq 0$, that is, at $p$, 
		\[\frac{A}{\cos\epsilon\sin\epsilon}u_n\pm \left(u_{\alpha n}+\frac{b_{n\alpha}\cos2\epsilon }{y_n}u_n\right)\geq 0.\]
		Because $u_n\geq 0$, this gives
		\[\sin\epsilon \cdot|u_{\alpha n}(p)|\leq C |u_n(p)|.\]
		
		It remains to show the boundedness of $u_{nn}(p)$. The proof follows basically same lines as in \cite[p.786]{guan2009jga}. For completeness, we include the proof here. We assume $u_{\alpha,\beta}$ for $1\leq \alpha, \beta\leq n-1$ is diagonal. Denote $\ell_\epsilon=\cos\epsilon-y_n\sin\epsilon$. Then at $p$,
		\[[a_{ij}]=\frac{1}{\omega}
		\begin{bmatrix}
			
			\ell_\epsilon+y^2_n\sin\epsilon u_{11}&0& \ldots & \frac{y_n^2\sin\epsilon}{\omega}u_{1n}\\
			0&\ddots&\cdots& \frac{y_n^2\sin\epsilon}{\omega}u_{2n} \\
			\vdots&\vdots&\ddots&\vdots\\ 
			\frac{y_n^2\sin\epsilon}{\omega}u_{1n}&\frac{y_n^2\sin\epsilon}{\omega}u_{2n}&\ldots& \ell_\epsilon+\frac{y_n\sin\epsilon (\omega^2+1)}{\omega^2}u_n+\frac{y_n^2\sin\epsilon }{\omega^2}u_{nn}.
		\end{bmatrix} 
		\]
		The Lemma 1.2 in \cite{caffarelli1984} tells that if $\sin\epsilon u_{nn}(p)$ tends to infinity, i.e., $a_{nn}\rightarrow \infty$, then eigenvalues of $[a_{ij}]$ are asymptotically given by 
		\[\kappa_\alpha=\frac{1}{\omega}(\ell_\epsilon+y^2_n\sin\epsilon u_{11}+o(1))\ \ \alpha\leq n-1\]
		and
		\[\kappa_n=a_{nn}(1+O(\frac{1}{a_{nn}}))\]
		where the asymptotic constants only depend on bound of $u_{\alpha\beta}.$
		For $\epsilon$ sufficiently small such that
		\[(\omega\kappa_1,\cdots,\omega\kappa_{n-1}, \ell_\epsilon+\frac{y_n\sin\epsilon (\omega^2+1)}{\omega^2}u_n)\]
		is uniformly closed to $(1,\cdots, 1)$. 
		Then we can use conditions (\ref{f degree 1}), (\ref{f unn condition}) to obtain
		\[\sigma=\frac{1}{\omega} F(\omega[a_{ij}])\geq \frac{1}{\omega}(1+\epsilon_0)\]
		if $\sin\epsilon u_{nn}(p)\geq R_0$ for a large but uniform constant $R_0$. This inequality implies a contradiction by observing 
		\[\sigma\geq (-\nu\cdot W)(1+\epsilon_0)\geq (\sigma\cos\epsilon-C_1\sin\epsilon)>\sigma.\]
		for sufficiently small $\epsilon$. We used (\ref{11111}) in the second inequality. Thus 
		\[\sin\epsilon\cdot u_{nn}(p)\leq R_0\]
		completing the proof.
		
	\end{proof}
	
	\section{curvature estimates and higher order estimates}\label{section5}
	In this section we prove a curvature estimate for compact strictly convex hypersurfaces satisfying $f(\kappa)=\sigma$. By allowing its boundary to approach to the infinity, we also prove a global interior curvature estimate for the largest principal curvature of strictly convex graphs with uniformly bounded principal curvatures satisfying $f(\kappa)=\sigma$.
	
	For a fixed point $x_0\in \Sigma$, choosing a local orthonormal frame $\{e_i\}_{i=1,\cdots,n}$ around $x_0$ such that $h_{ij}(x_0)=\kappa_i\delta_{ij}$, then in $\mathbb{H}^{n+1}$ there holds Codazzi equation $h_{ijk}=h_{ikj}$. Ricci identity and Gauss equation imply the following commutator formula.
	\begin{equation}
		h_{iijj}-h_{jjii}=(\kappa_i\kappa_j-1)(\kappa_i-\kappa_j).
	\end{equation}
	Thus
	\begin{equation}\label{com formula}
		F^{ii}h_{jjii}=F^{ii}h_{iijj}+(1+\kappa_j^2)\sum_{i=1}^{n}-\kappa_j\sum_{i=1}^{n}f_i-\kappa_j\sum_{i=1}^n\kappa_i^2f_i.
	\end{equation}
	\begin{theorem}\label{interior C2 esitimates thm}
		Let $\Sigma$ be a smooth strictly convex hypersurface in $\mathbb{H}^{n+1}$ satisfying $f(\kappa)=\sigma$ and suppose that there exist constants $C_1$, $C_2>1$ and $0<a<1$ such that
		\begin{equation}\label{C0 upper bound condition}
			\tan u\leq C_1 \ \ \ \ \text{on}\ \Sigma,
		\end{equation}
		\begin{equation}\label{C1 lower bound condition}
			2a\leq -\frac{\nu\cdot W}{\cos u} \leq C_2\ \ \ \ \text{on}\ \Sigma
		\end{equation}
		where $\cot u=y_n/y_{n+1}$ and $z=(y_1,\cdots,y_n,y_{n+1})$ is the position vector.
		Let $\kappa_{\max}(p)$ be the largest principal curvature of $\Sigma$ at $p$. Then 
		\begin{enumerate}
			\item[(1)] If $\Sigma$ is compact, we have \begin{equation}\label{curvature estimates}
				\underset{\Sigma}{\max}\frac{\kappa_{\max}}{-\nu\cdot W \sec u-a}\le \max\left\{ \frac{C_3}{a},\underset{\partial\Sigma}{\max}\frac{\kappa_{\max}}{-\nu\cdot W \sec u-a}\right\}
			\end{equation}
			for a uniform constant $C_3$ only depending on $C_1, C_2$ and $a$.
			\vskip.1cm

			\item[(2)] If $\Sigma$ is complete noncompact with boundary $\partial\Sigma\subset \p_\infty \h$ and has bounded principal curvatures $0<\kappa_i\leq C_4$, we have that for $0<b<\frac{a}{2C_2(3+2C_1^2)},$
			\begin{equation}\label{curvature estimates1}
				\underset{\Sigma}{\sup}\frac{\tan^b u\cdot \kappa_{\max}}{-\nu\cdot W \sec u-a}\le \frac{C_3}{a} \sup_{\Sigma} \tan^b u
			\end{equation}
			for a uniform constant $C_3$ only depending on $C_1, C_2$ and $a$. In particular, letting $b$ tend to zero yields
			\begin{equation}
				\kappa_{\max} \leq \frac{C_3}{a}.
			\end{equation}
		\end{enumerate}
		
	\end{theorem}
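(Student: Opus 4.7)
The plan is to apply the maximum principle to the logarithm of the quantity on the left of \eqref{curvature estimates}. Set
\[\eta := -\nu\cdot W\sec u - a = \nu_{n+1} - \nu_n\tan u - a > 0\]
by \eqref{C1 lower bound condition}, and note the key rewriting $\eta + a = -(\nu_n-\nu_{n+1}\cot u)\tan u$, which allows $F^{ij}\nabla_{ij}\eta$ to be expanded via the identities of Theorem \ref{gradientcalculation}. For part $(1)$, maximize $Q = \log h_{11} - \log\eta$ on $\Sigma$, where $h_{11}$ denotes the largest eigenvalue of the shape operator (treated as simple by the standard perturbation argument, or equivalently via the direction-vector formulation $h(\xi,\xi)$). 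Either the max lies on $\partial\Sigma$, giving \eqref{curvature estimates} from its boundary term, or it is achieved at an interior point $p_0$; at $p_0$ choose a local orthonormal frame diagonalizing $h_{ij}$ with $h_{11}(p_0) = \kappa_{\max}(p_0) =: \kappa_1$.

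At $p_0$ the critical-point relations $(\log h_{11})_i = (\log\eta)_i$ cause the gradient-squared terms to cancel in the second-derivative test, yielding $F^{ii}h_{11,ii}/h_{11} \le F^{ii}\eta_{ii}/\eta$. For the left side, combine the commutator formula \eqref{com formula} with $F^{ii}h_{ii,11} = -F^{ij,kl}h_{ij,1}h_{kl,1} \ge 0$, obtained by twice-differentiating $F(h_{ij})=\sigma$ and using concavity \eqref{f concave}, to produce a lower bound of the form $\sigma\kappa_1 - \sum f_i\kappa_i^2 - \sum f_i + \sigma/\kappa_1$ (plus terms that vanish at the critical point). For the right side, apply the product rule to $\eta = -(\nu_n-\nu_{n+1}\cot u)\tan u - a$: the identity \eqref{t3.2.2} yields the decisive coefficient $-(1 + a/\eta)\sum f_i\kappa_i^2$, while the residual contributions from $F^{ij}\nabla_{ij}\tan u$ and cross terms $F^{ij}(\nu_n-\nu_{n+1}\cot u)_i(\tan u)_j$ are bounded using \eqref{t3.2.1}, the a priori bounds \eqref{C0 upper bound condition}--\eqref{C1 lower bound condition}, and the first-derivative identity $(\nu_n-\nu_{n+1}\cot u)_i = -\kappa_i(\cot u)_i$ recorded in the proof of Theorem \ref{gradientcalculation}. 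Rearranging and using $\sum f_i \ge 1$ from \eqref{sum f_i lower bound} yields an inequality of the shape
\[\frac{a}{\eta}\sum f_i\kappa_i^2 + \sigma\kappa_1 \le \frac{C}{a}\Bigl(1 + \sum f_i\Bigr),\]
from which $\kappa_1/\eta \le C_3/a$ follows by $\sum f_i\kappa_i^2 \ge f_1\kappa_1^2$ together with $\sum f_i \ge 1$, proving \eqref{curvature estimates}.

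For part $(2)$, $\Sigma$ is noncompact and $u\to 0$ at the ideal boundary, so $Q$ need not attain an interior maximum. I introduce the modified test function
\[Q_b := b\log\tan u + \log h_{11} - \log\eta, \qquad 0 < b < \frac{a}{2C_2(3 + 2C_1^2)}.\]
Since $h_{11}$ is bounded by hypothesis and $\eta \ge a$, the factor $\tan^b u$ drives $Q_b \to -\infty$ at infinity, so $Q_b$ attains an interior maximum $\widetilde p_0$. The computation of part $(1)$ is repeated at $\widetilde p_0$ with the extra contribution
\[bF^{ij}\nabla_{ij}\log\tan u = b\csc^2 u\bigl[(\nu_n-\nu_{n+1}\cot u)\sigma + \cot u\sum f_i\bigr] - bF^{ij}(\log\tan u)_i(\log\tan u)_j,\]
computed from \eqref{t3.2.1}. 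The explicit range for $b$ is chosen precisely so that this extra term is absorbed by the favorable $(a/\eta)\sum f_i\kappa_i^2$ generated in part $(1)$, yielding \eqref{curvature estimates1}. Letting $b \to 0$ then recovers the uniform bound $\kappa_{\max} \le C_3/a$.

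The main obstacle I anticipate is the bookkeeping in $F^{ii}\eta_{ii}/\eta$: the product rule applied to $(\nu_n-\nu_{n+1}\cot u)\tan u$ generates cross terms that are not a priori bounded and must be traded, via the critical-point condition and the identity $g_i = -\kappa_i(\cot u)_i$, against the good terms from the commutator formula on the left. A secondary technical point is the simultaneous treatment of the case when $\kappa_{\max}$ is a multiple eigenvalue, which is handled in the standard way by perturbing $h_{11}$ with constants that do not affect the final estimate.
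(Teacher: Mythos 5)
Your overall framework is the paper's: you maximize $\tan^b u\,\kappa_{\max}/\eta$ (with $b=0$ for part (1)), use the commutator formula \eqref{com formula}, and expand $F^{ij}\eta_{ij}$ via the identities of Theorem \ref{gradientcalculation} together with $g_i=-\kappa_i(\cot u)_i$. However, there is a genuine gap in how you dispose of the cross term, and your final inequality does not close the estimate.

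You replace $-F^{ij,kl}h_{ij,1}h_{kl,1}$ by the weak bound $\ge 0$ coming from concavity and then propose to "trade" the bad cross terms against "the good terms from the commutator formula." This does not work. After substituting $-g\tan u=\eta+a$, the bad cross term coming from $F^{ij}g_i(\tan u)_j$ and the second-order expansion of $g/\cot u$ has the shape
\[
\frac{2\kappa_1}{\eta}\sum_i f_i\,(\kappa_i+g\tan u)\,\frac{(\cot u)_i^2}{\cot^2 u},
\]
which for indices with $\kappa_i<-g\tan u$ contributes a quantity bounded below only by $-C\kappa_1\sum f_i$ with $C\sim (1+C_1^2)$; the "good" coefficients you generate, $\dfrac{a\kappa_1}{\eta}\sum f_i$ and $\dfrac{a\kappa_1}{\eta}\sum f_i\kappa_i^2$, have coefficient at most $\kappa_1\sum f_i$ and cannot dominate since $\eta\ge a$ forces $a/\eta\le 1 < C$. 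Neither can the commutator-formula term $\sigma\kappa_1^2$ absorb a term of size $\kappa_1\sum f_i$, because $\sum f_i$ is not a priori bounded. Consequently the inequality you write at the end, $\tfrac{a}{\eta}\sum f_i\kappa_i^2+\sigma\kappa_1\le \tfrac{C}{a}(1+\sum f_i)$, even if it were obtained, would not yield $\kappa_1/\eta\le C_3/a$: both sides are allowed to grow with $\sum f_i$, and $\sum f_i\kappa_i^2\ge f_1\kappa_1^2$ is of no use because $f_1\le\sigma/\kappa_1$ is small precisely when $\kappa_1$ is large.

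The missing ingredient is the Andrews--Gerhardt refinement of concavity,
\[
-F^{ij,rs}h_{ij1}h_{rs1}\ \ge\ 2\sum_{i\ge 2}\frac{f_i-f_1}{\kappa_1-\kappa_i}\,h_{i11}^2,
\]
combined at the critical point with $h_{i11}=\kappa_1\,\dfrac{(\cot u)_i}{\cot u}\left(-\dfrac{\kappa_i+g\tan u}{g\tan u+a}+b\right)$. The paper then splits the bad indices into $J=\{f_i\le f_1/\delta\}$ and $L=\{f_i>f_1/\delta\}$: for $J$ the bad term is handled directly by $\kappa_1 f_1\le\sigma$, while for $L$ it is absorbed into the Andrews--Gerhardt term (which is large precisely when $f_i\gg f_1$). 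Only after this absorption do all occurrences of $\sum f_i$ and $\sum f_i\kappa_i^2$ cancel, leaving a quadratic in $\kappa_1$ with leading term $\sigma\kappa_1^2$ and uniform coefficients, from which the bound follows. Without this step the argument does not close, so as written the proposal has a real gap in both parts (1) and (2).
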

	\vskip.2cm
	\begin{remark}
		For convex geodesic graph $\Sigma$ with boundary $\partial\Sigma\subset P_\epsilon$ satisfying $f(\kappa)=\sigma$, the required assumption are satisfied. Specifically, assumption (\ref{C0 upper bound condition}) follows from  Lemma \ref{c0estimate} while (\ref{C1 lower bound condition}) follows from Lemma \ref{c0estimate} and Theorem \ref{c1 estimates}.
	\end{remark}
	
	\vskip.2cm
	\begin{proof}[Proof of Theorem \ref{interior C2 esitimates thm}]
		Let $$G=(\tan u)^b\frac{\kappa_{\max}}{-\nu\cdot W \sec u-a}.$$
		Assume the maximum of $G$ is attained at an interior point $p_0\in\Sigma$. Choose a local orthonormal frame $\{e_i\}_{i=1,\cdots,n}$ around $p_0$ such that $h_{ij}(p_0)=\kappa_i\delta_{ij}$. Without loss of generality we may assume $\kappa_1=\kappa_{\max}(p_0)$. Therefore $(\tan u)^b\frac{h_{11}}{-\nu\cdot W \sec u-a}$ achieves its local maximum at $p_0$.
		Write $g=\nu_n-\nun \cot u$ so $-\nu\cdot W \sec u=-g\tan u.$
		Consequently, by considering critical equations of $\log G$ at $p_0$ we have
		\[
		\frac{h_{11i}}{h_{11}}+b\frac{(\tan u)_i}{\tan u}-\frac{(g\tan u)_i}{g\tan u+a}=0,\]
		and 
		\begin{equation*}
			\frac{h_{11ii}}{h_{11}}-\frac{(g\tan u)_{ii}}{g\tan u+a}+b\frac{(\tan u)_{ii}}{\tan u}-(b+b^2)\frac{(\tan u)_i^2}{(\tan u)^2}+2b\frac{(\tan u)_i(g\tan u)_i}{(\tan u)(g\tan u+a)}\leq 0.
		\end{equation*}
		Thus
		\begin{flalign}
			\begin{aligned}
				\label{inequalityofFijh11ij}
				0&\geq F^{ii}h_{11ii}-\kappa_1\frac{F^{ii}(g\tan u)_{ii}}{g\tan u+a}\\
				&+F^{ii}\left(b\frac{(\tan u)_{ii}}{\tan u}-(b+b^2)\frac{(\tan u)_i^2}{(\tan u)^2}+2b\frac{(\tan u)_i(g\tan u)_i}{(\tan u)(g\tan u+a)}\right)
			\end{aligned}
		\end{flalign}

		Differentiating $F(h_{ij})=\sigma$ twice and using (\ref{com formula}) yield
		\begin{equation}\label{exchangeofFiih11ii}
			F^{ii}h_{11ii}=-F^{ij,rs}h_{ij1}h_{rs1}+\sigma(1+\kappa_1^2)-\kappa_1\sum f_i-\kappa_1\sum\kappa_i^2f_i.
		\end{equation}
		Due to an inequality  by Andrews \cite{benandrews1994convex} and Gerhardt \cite{gerhardt1996} which states
		\[-F^{ij,rs}h_{ij1}h_{rs1}\geq \sum_{i\neq j}\frac{f_i-f_j}{\kappa_j-\kappa_i}h_{ij1}^2\geq 2\sum_{i\geq 2}\frac{f_i-f_1}{\kappa_1-\kappa_i}h_{i11}^2.\]
		On the other hand, we have from (\ref{gradientofg})
		\begin{align*}
			h_{i11}&=\kappa_1\left(\frac{(g\tan u)_i}{g\tan u+a}-b\frac{(\tan u)_i}{\tan u}\right)\\
			&=\kappa_1\frac{(\cot u)_i}{\cot u} \left(-\frac{\kappa_i+g\tan u}{g\tan u+a}+b\right).
		\end{align*}
		Thus
		\begin{equation}\label{fijrshij1hrs1}
			-F^{ij,rs}h_{ij1}h_{rs1}\geq 2\kappa_1^2\sum_{i\geq 2}\frac{f_i-f_1}{\kappa_1-\kappa_i}\frac{(\cot u)^2_i}{\cot^2 u}\left(-\frac{\kappa_i+g\tan u}{g\tan u+a}+b\right)^2. 
		\end{equation}
		\vskip.2cm
		
		We now calculate $F^{ij}(g\tan u)_{ij}$. It follows
		\[F^{ij}(g/\cot u)_{ij}=F^{ij}\left(\frac{g_{ij}}{\cot u}-\frac{2g_i(\cot u)_j}{\cot^2 u}+\frac{2g(\cot u)_i(\cot u)_j}{\cot^3 u}-\frac{g(\cot u)_{ij}}{\cot^2 u}\right).\]
		Recall (\ref{t3.2.2}) and (\ref{t3.2.1}), we have
		\[F^{ij}g_{ij}=-g\sum f_i\kappa_i^2-\sigma\cot u,\]
		\[F^{ij}(\cot u)_{ij}=\sigma g+\cot u\sum f_i.\]
		Moreover, (\ref{gradientofg}) says
		\[F^{ij}g_i(\cot u)_j=-F^{ij}\kappa_i(\cot u)_i(\cot u)_j=-\sum f_i\kappa_i(\cot u)_i^2.\]
		
		Hence, combining these formulae and (\ref{inequalityofFijh11ij}),(\ref{exchangeofFiih11ii}),(\ref{fijrshij1hrs1}) imply
		
		\begin{flalign}
			\begin{aligned}\label{mainequation1}
				0&\geq 2\kappa_1^2\sum_{i\geq 2}\frac{f_i-f_1}{\kappa_1-\kappa_i}\frac{(\cot u)^2_i}{\cot^2 u}\left(-\frac{\kappa_i+g\tan u}{g\tan u+a}+b\right)^2\\
				&+\sigma\left(1+\kappa_1^2+\frac{\kappa_1}{g\tan u+a}+\frac{\kappa_1 (g\tan u)^2}{g\tan u+a}-bg\tan u \kappa_1\right)\\
				&+\frac{-a\kappa_1}{g\tan u+a}(\sum f_i+\sum f_i\kappa_i^2)-b\kappa_1\sum f_i\\
				&+\frac{(2b-2)\kappa_1}{g\tan u+a}\sum f_i\frac{(\cot u)_i^2}{\cot^2 u}(g\tan u+\kappa_i).
			\end{aligned}
		\end{flalign}
		By the assumption (\ref{C1 lower bound condition}), we can assume that $\kappa_1\geq \frac{1+C_2^2}{a}$, otherwise we are done with the proof. By the assumption, $b\leq \frac{a}{C_2}$ so that the third line of (\ref{mainequation1}) is nonegative. Therefore, all terms in (\ref{mainequation1}) are certainly nonegative except possibly the last one. From now on, we will use $C$ with lower indices to denote universal constants that only depend on $\sigma, \Omega$ and the dimension $n.$
		
		Define 
		\[J=\left\{i: g\tan u+\kappa_i<0, \ f_i\leq \frac{f_1}{\delta}\right\}\]
		\[L=\left\{i: g\tan u+\kappa_i<0, \ f_i>\frac{f_1}{\delta}\right\}\]
		where $\delta \in (0,1)$ is to be determined.
		
		Regarding the last term of (\ref{mainequation1}), considering summation in $J$ gives
		
		\begin{align*}
			\frac{(2b-2)\kappa_1}{g\tan u+a}\sum_{J}f_i\frac{(\cot u)_i^2}{\cot^2 u}(g\tan u+\kappa_i)&\geq \frac{(2-2b)\kappa_1 f_1}{a\delta}\sum_{J}(g\tan u+\kappa_i)\frac{(\cot u)_i^2}{\cot^2 u}\\
			&\geq \frac{(2-2b)\kappa_1 f_1 g\tan u}{a\delta}\sum_{J}\frac{(\cot u)_i^2}{\cot^2 u}\\
			&\geq  -\frac{(2-2b)C_2\sigma(1+C_1^2)}{a\delta}
		\end{align*}
		where in the last inequality we have used that $\kappa_1f_1\leq \sigma$ and
		\[\sum \frac{(\cot u)_i^2}{\cot^2 u}=1+\tan^2 u-(g\tan u)^2\leq 1+C_1^2\]
		followed by the the assumption (\ref{C0 upper bound condition}). 
		
		For the summation in $L$ of last term, we absorb it by the summation in $L$ of first term. Indeed,
		\begin{align*}
			&2\kappa_1^2\sum_{i\geq 2}\frac{f_i-f_1}{\kappa_1-\kappa_i}\frac{(\cot u)^2_i}{\cot^2 u}\left(-\frac{\kappa_i+g\tan u}{g\tan u+a}+b\right)^2\\
			&\geq 2(1-\delta)\kappa_1\sum_{L}f_i\frac{(\cot u)^2_i}{\cot^2 u}\left(-\frac{\kappa_i+g\tan u}{g\tan u+a}+b\right)^2\\
			&\geq 2(1-\delta)\kappa_1 \sum_{L}f_i\frac{(\cot u)^2_i}{\cot^2 u}\left(\frac{\kappa_i+g\tan u}{g\tan u+a}\right)^2-4(1-\delta)b\kappa_1 \sum_{L}f_i\frac{(\cot u)^2_i}{\cot^2 u}\left(\frac{\kappa_i+g\tan u}{g\tan u+a}\right)
		\end{align*}
		in which there exists a term:
		\begin{align*}
			2\kappa_1&\sum_{L}f_i\frac{(\cot u)^2_i}{\cot^2 u}\left(\frac{\kappa_i+g\tan u}{g\tan u+a}\right)^2\\&= 2\kappa_1\sum_{L}f_i\frac{(\cot u)^2_i}{\cot^2 u}\left(\frac{\kappa_i+g\tan 
				u}{g\tan u+a}+\frac{\kappa_i^2+(g\tan u-a)\kappa_i-ag\tan u}{(g\tan u+a)^2}\right)\\
			&\geq 2\kappa_1\sum_{L}f_i\frac{(\cot u)^2_i}{\cot^2 u}\left(\frac{\kappa_i+g\tan 
				u}{g\tan u+a}\right)+2\kappa_1\sum_{L}f_i\frac{(\cot u)^2_i}{\cot^2 u}\frac{(g\tan u-a)\kappa_i}{(g\tan u+a)^2}\\
			&\geq 2\kappa_1\sum_{L}f_i\frac{(\cot u)^2_i}{\cot^2 u}\left(\frac{\kappa_i+g\tan 
				u}{g\tan u+a}\right)-\frac{2(C_2+a)(1+C_1^2)}{a^2}\sigma
			\kappa_1.
		\end{align*}
		whose first term eliminates the part of the last term of (\ref{mainequation1}). 
		
		Simplifying (\ref{mainequation1}) gives
		
		\begin{flalign}
			\begin{aligned}\label{mainequation2}
				0
				&\geq \sigma(1+\kappa_1^2-\frac{1+C_2^2}{a}\kappa_1)-\frac{(2-2b)C_2\sigma(1+C_1^2)}{a\delta}-\frac{2(C_2+a)(1+C_1^2)}{a^2}\sigma \kappa_1\\
				&+\frac{-a\kappa_1}{g\tan u+a}(\sum f_i+\sum f_i\kappa_i^2)-b\kappa_1\sum f_i-2\delta\kappa_1\sum_{L}f_i\frac{(\cot u)^2_i}{\cot^2 u}\left(\frac{\kappa_i+g\tan u}{g\tan u+a}\right)^2\\
				&-2b(1-2\delta)b\kappa_1 \sum_{L}f_i\frac{(\cot u)^2_i}{\cot^2 u}\left(\frac{\kappa_i+g\tan u}{g\tan u+a}\right)\\
				&\geq \sigma(1+\kappa_1^2-\frac{1+C_2^2}{a}\kappa_1)-\frac{(2-2b)C_2\sigma(1+C_1^2)}{a\delta}-\frac{2(C_2+a)(1+C_1^2)}{a^2}\sigma \kappa_1\\
				&+\frac{-a\kappa_1}{g\tan u+a}(\sum f_i+\sum f_i\kappa_i^2)-(b+2b(1+C_1^2))\kappa_1\sum f_i\\
				&-2\delta\kappa_1\sum_{L}f_i\frac{(\cot u)^2_i}{\cot^2 u}\left(\frac{\kappa_i+g\tan u}{g\tan u+a}\right)^2.
			\end{aligned}
		\end{flalign}
		
		Since $b\leq \frac{a}{2C_2(3+2C_1^2)}$, the fourth term and fifth term of right-hand side of (\ref{mainequation2}) can be written as
		\begin{align*}
			IV+V&\geq \frac{a\kappa_1}{-2(g\tan u+a)C_2^2}\Big((C_2^2-(g\tan u)^2)\sum f_i\\&+\sum (\kappa_i+g\tan u)^2 f_i+(C_2^2-1)\sum f_i\kappa_i^2-2\sigma g\tan u \Big)\\
			&\geq \frac{a\kappa_1}{-(g\tan u+a)C_2^2}\Big(\sum (\kappa_i+g\tan u)^2 f_i-2\sigma g\tan u \Big)\\
			&\geq \frac{a\kappa_1}{-(g\tan u+a)C_2^2}\sum (\kappa_i+g\tan u)^2 f_i+ \frac{4a^2}{C_2-a}\sigma \kappa_1.
		\end{align*}
		Therefore, choosing $\delta=\frac{a^2}{2(1+C_1^2)C_2^2}$, we can reduce (\ref{mainequation2}) to
		\begin{align*}
			0&\geq \sigma(1+\kappa_1^2-\frac{1+C_2^2}{a}\kappa_1)-\frac{(2-2b)C_2\sigma(1+C_1^2)}{a\delta}-\frac{2(C_2+a)(1+C_1^2)}{a^2}\sigma \kappa_1\\
			&+ \frac{4a^2}{C_2-a}\sigma \kappa_1
		\end{align*}
		Solving the equation we can find a constant $C_3=C_3(C_1,C_2,a)$ such that
		\[\kappa_1\leq C_3.\]
		thus finishing the proof of the theorem.

	\end{proof}
	
	The first part of Theorem \ref{interior C2 esitimates thm} together with Theorem \ref{boundaryestiamtesofsecondderivatives} implies that the graph associated to the solution of (\ref{newboundarycondition}) has uniform bounded curvature estimates, and thus uniform bounds for $\sin u^\epsilon\cdot |D^2 u|$ exist.
	The uniform $C^0$, $C^1$ and $C^2$ estimates tell us that the fully nonlinear PDE under consideration is uniform elliptic. Thus,
	the standard Evans-Krylov-Safonov theory \cite{Gilbargtrudingerbook} implies the higher estimates $|u|_{C^{4,\alpha}}\leq C$ for a uniform constant $C$.

	\section{Proof of the theorem \ref{thm with bdy}}\label{section 6}
	In this section we give a proof to the main theorem \ref{thm with bdy} by the degree theory.
	
	Consider for $0\leq t\leq 1$ the family of Dirichlet problems
	\begin{align}\label{equationinsection6.1}
		\begin{cases}
			G(D^2u^t,Du^t,u^t,z)=t\sigma+(1-t)\cos\epsilon &\text{in} \ \Omega,\\
			u^t=\epsilon &\text{on}\ \p\Omega,\\
			u^0\equiv \epsilon & \text{on}\ \bar{\Omega}.
		\end{cases}
	\end{align}
	Since $G_u|_{u^0}=-\sin\epsilon$, we can find a smooth family of solutions $u^t$, $0\leq t\leq t_0$ for a small time $t_0$ by the implicit function theorem. However, the linearized operator is not necessarily invertible all the time, we can not use method of continuity directly. Instead, we use the degree theory for second-order fully nonlinear elliptic operators developed in
	Y. Y. Li \cite{yanyanlidegree} for the existence proof.
	
	Let $v=u-\epsilon.$ The equation (\ref{equationinsection4}) turns to be
	
	\begin{align}\label{equationinsection6.2}
		\begin{cases}
			\tilde{G}(D^2v,Dv,v,z)=\sigma &\text{on} \ \Omega,\\
			v=0 &\text{on}\ \p\Omega,
		\end{cases}
	\end{align}
	where the linearized operator of $\tilde{G}$ is basically same as the one $G$. In particular, \begin{equation}\label{newlinearizedoperator}
		\tilde{G}_v=(\nu_n-\nu_{n+1}\cos (v+\epsilon))\sum F^{ii}+\sigma\cot(v+\epsilon).
	\end{equation}
	
	Let $E(v,t): C_0^{4,\alpha}(\bar{\Omega})\rightarrow C^{2,\alpha}(\bar{\Omega})$ be defined by
	\[E(v,t)=\tilde{G}(D^2v,Dv,v,z)-\sigma_t\]
	for $t\in [0,1]$ where $\sigma_t=t\sigma+(1-t)\cos\epsilon$. Notice that $\sigma_t$ has same bound as $\sigma$. Let $C$ be the uniform constant obtained from previous sections such that a priori solution to $E(v,t)=0$ has  a uniform upper bound $\|u\|_{C^{4,\alpha}(\bar{\Omega})}< C$ and $I/C < A[u]< CI$.  Define a large subset $O\subset C_0^{4,\alpha}(\bar{\Omega})$ by
	\[O=\{v\in C_0^{4,\alpha}(\bar{\Omega}): \frac{1}{C}\delta_{ij}<a_{ij}(v)<C\delta_{ij},\|v\|_{C^{4,\alpha}(\bar{\Omega})}< C\}.\]
	where 
	\begin{align*}
		a_{ij}(v)=\frac{z^2_nz_{n+1}}{\omega}\gamma^{ik}u_{kl}\gamma^{\ell j}+\frac{z^2_nz_{n+1}}{\omega^3}u_iu_ju_n+\frac{z_{n+1}}{\omega^2}(\gamma^{in}u_j+\gamma^{jn}u_i)+\nu_{n+1}\delta_{ij}.
	\end{align*}
	Apparently, it is an open bounded subset under norm $\|\cdot\|_{C^{4,\alpha}(\bar{\Omega})}$. Firstly, notice $E(\cdot,t)$ is not zero on $\partial\Omega$. Using the Proposition 1.2 of \cite{yanyanlidegree}, we conclude that
	\[\operatorname{degree}(E(\cdot,1),O,0)=\operatorname{degree}(E(\cdot,0),O,0).\]
	It suffices to show that $\operatorname{degree}(E(\cdot,0),O,0)\neq 0$, thus it will follow immediately that $\operatorname{degree}(E(\cdot,1),O,0)\neq 0$. That is, there exists a $v_0$ in $O$ such that $\tilde{G}(D^2v_0,Dv_0, v_0, z)=\sigma$. This will in turn give our a solution $u_0$ to the original equation. 
	
	At $t=0$, the linearized operator $L_0$ is invertible since (\ref{newlinearizedoperator}) is negative. Furthermore, at $t=0,$ the only solution to equation (\ref{equationinsection6.2}) is constant $v=\epsilon$ by the geometric maximum principle.  Thus due to Proposition 1.3 in \cite{yanyanlidegree} we have
	\[\operatorname{degree}(E,O,0)=\operatorname{degree}(L_0,O,0)\neq 0.\]
	This completes the proof of Theorem \ref{thm with bdy}.

	\bibliographystyle{plain}
	\bibliography{Reference}

\end{document}